\theoremstyle{plain}
\newtheorem{maintheorem}{Main Theorem}
\DeclareMathOperator{\Ass}{Ass}
\DeclareMathOperator{\sep}{sep}
\DeclareMathOperator{\Bl}{Bl}
\DeclareMathOperator{\RR}{\mathcal{R}}
\DeclareMathOperator{\CC}{\mathbb{C}}
\DeclareMathOperator{\MM}{\mathcal{M}}
\DeclareMathOperator{\Max}{Max}
\newcommand{\fm}{\mathfrak{m}}
\newcommand{\fa}{\mathfrak{a}}
\newcommand{\NN}{\mathbb{N}}
\begin{document}

\title{Zariski-Nagata Theorems for Singularities and the Uniform Izumi-Rees Property}
\author{Thomas Polstra}
\email{tmpolstra@ua.edu}
\address{Department of Mathematics, University of Alabama, Tuscaloosa, AL 35487 USA}
\thanks{The author was supported in part by NSF Grant DMS \#2101890 and NSF Grand DMS \#2502317 during the development of this article.}
\subjclass[2020]{13H15 (Primary); 13A18, 13A30, 14C17 (Secondary)}
\keywords{Multiplicity, Izumi-Rees Theorem, Divisorial Valuations, Symbolic Powers}

\begin{abstract}
We introduce and explore the Uniform Izumi-Rees Property in Noetherian rings with applications to multiplicity theory and containment relationships among symbolic powers of ideals. As an application, we prove that if $R$ is a normal domain essentially of finite type over a field, there exists a constant $C$ so that for all prime ideals $\fp\subseteq \fq\in\Spec(R)$, if $\fp\subseteq \fq^{(t)}$, then for all $n\in\NN$, there is a containment of symbolic powers $\fp^{(Cn)}\subseteq \fq^{(tn)}$.
\end{abstract}

\maketitle

\section{Introduction}

Let $R$ be a commutative excellent domain and $\Spec(R)$ the collection of prime ideals of $R$. If $\fp\in\Spec(R)$ then the \emph{$n$th symbolic power of $\fp$} is the ideal $\fp^{(n)}:=\fp^nR_\fp\cap R$. If $0\not=f\in R$ then the \emph{order of $f$ at $\fp$} is $\ord_\fp(f)=\sup\{n\in \NN\mid f\in\fp^{(n)}\}$. Let $e(R_\fp/fR_\fp)$ denote the (Hilbert-Samuel) multiplicity of the local ring $R_\fp/fR_\fp$ with respect to the maximal ideal. The values $\ord_\fp(f)$ and $e(R_\fp/fR_\fp)$ serve as competing notions of vanishing order of the function $f$ along the generic point of $V(\fp)$, coinciding if $R_\fp$ is non-singular. An important distinction between multiplicity and order, at singular primes, is that multiplicity enjoys a semi-continuity property, if $\fp\subseteq \fq\in\Spec(R)$, then $e(R_\fp/fR_\fp)\leq e(R_\fq/fR_\fq)$. Consequently, semi-continuity of multiplicity provides the Local Zariski-Nagata Theorem for primes $\fp\subseteq \fq\in\Spec(R)$ belonging to the non-singular locus of $\Spec(R)$; if $R_\fq$ is non-singular then for every $n\in\NN$ there is a containment of ideals $\fp^{(n)}\subseteq \fq^{(n)}$.

If $\fp\subseteq \fq\in\Spec(R)$ and $R_\fq$ is singular, then it is possible that there exists $f\in\fp$ so that $\ord_\fp(f)>\ord_\fq(f)$, equivalently there exists an $n\in\NN$ so that $\fp^{(n)}\not\subseteq \fq^{(n)}$. The Uniform Chevalley Theorem, \cite[Theorem~2.3]{HKV}, is an adaptation of the Local Zariski-Nagata Theorem to a singular point $\fq\in\Spec(R)$, and provides a constant $C$, depending on $\fq$, so that for all $\fp\subseteq \fq$ and $n\in\NN$ there is a containment of ideals $\fp^{(Cn)}\subseteq \fq^{(n)}$.

\begin{theorem}
\label{theorem Zariski-nagata}
    Let $R$ be a Noetherian ring of arbitrary characteristic.
    \begin{enumerate}
        \item\label{local Zariski nagata} \cite[Local Zariski-Nagata Theorem, Page~143]{NagataLocalRings}: If $\fp\subseteq \fq$ are prime ideals belonging to the non-singular locus of $R$ then $\fp^{(n)}\subseteq \fq^{(n)}$ for all $n\in\mathbb{N}$.
        \item\label{Uniform Chevalley HKV} \cite[Uniform Chevalley Theorem]{HKV}: Let $\fq\in\Spec(R)$ a prime with the property that $R_\fq$ is analytically unramified. There exists a constant $C$, depending on $\fq$, so that if $\fp\in\Spec(R_\fq)$,  then $\fp^{(Cn)}\subseteq \fq^{(n)}$ for all $n\in\NN$.
        \item\label{Uniform chevalley direct summands}\cite[Main Result]{DSGJ} Let $R$ be a graded direct summand of either the polynomial ring over a field or a discrete valuation ring. Suppose that the degree of the generators of $R$ as an algebra are bounded by $D$, and $\MM$ the homogeneous maximal ideal of $R$. If $\fp\in\Spec(R)$ then $\fp^{(Dn)}\subseteq \MM^n$ for all $n\in\NN$.
    \end{enumerate}
\end{theorem}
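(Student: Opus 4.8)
Because each of the three assertions is quoted verbatim from the indicated references, the plan is essentially to cite \cite{NagataLocalRings}, \cite{HKV}, and \cite{DSGJ}; what follows is the reasoning I would recall so that the role each plays in this paper is visible.

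For the first assertion I would run the multiplicity argument foreshadowed in the introduction. After localizing at $\fq$ I may assume $(R,\fq)$ is local with $R_\fp$ and $R_\fq$ both regular. I would then invoke the classical identity that in a regular local ring the Hilbert--Samuel multiplicity of a hypersurface quotient equals the adic order of its defining element---the initial form of $f$ is automatically a nonzerodivisor in the associated graded ring, which is a polynomial ring---so that $\ord_\fp(f)=e(R_\fp/fR_\fp)$ and $\ord_\fq(f)=e(R_\fq/fR_\fq)$ for every $0\neq f\in\fp$; combining this with the semi-continuity $e(R_\fp/fR_\fp)\le e(R_\fq/fR_\fq)$ yields $\ord_\fp(f)\le\ord_\fq(f)$, which is precisely the family of containments $\fp^{(n)}\subseteq\fq^{(n)}$. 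To treat arbitrary characteristic cleanly I would instead, or in addition, use the differential description of symbolic powers in a regular ring, namely $\fa^{(n)}=\{\,f:\delta(f)\in\fa\text{ for every differential operator }\delta\text{ of order }<n\,\}$, from which the inclusion $\fp\subseteq\fq$ forces $\fp^{(n)}\subseteq\fq^{(n)}$ at once.

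For the second and third assertions I would simply quote the main theorems of \cite{HKV} and \cite{DSGJ}, recalling the mechanism of each. In the Uniform Chevalley Theorem one controls $\fp^{(n)}$, uniformly in $\fp$, by a fixed power of the maximal ideal using the Rees valuations of $\fp$ together with an Izumi-type linear comparison of orders---precisely the phenomenon that the present paper axiomatizes. For the graded direct summand statement one pulls the containment back to the ambient regular ring $S$, where the first assertion applied to the minimal primes of $\fp S$ pushes $\fp^{(Dn)}$ into the $Dn$-th power of the homogeneous maximal ideal of $S$, hence into its part of degree $\ge Dn$; one then applies the graded $R$-module splitting $S\to R$ and the elementary fact that $R_{\ge Dn}\subseteq\MM^n$ once $R$ is generated in degrees at most $D$.

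The one point deserving care, which I would flag, is the arbitrary-characteristic clause of the first assertion: the differential-operator route must be carried out with the full ring of Hasse--Schmidt (divided-power) differential operators rather than ordinary derivations, and the multiplicity route relies on the semi-continuity of Hilbert--Samuel multiplicity, which holds under mild hypotheses such as excellence. Both are standard, so the effort here is bookkeeping rather than a new idea.
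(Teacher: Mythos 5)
This theorem is stated in the paper purely as a compendium of known results with citations to \cite{NagataLocalRings}, \cite{HKV}, and \cite{DSGJ}, and the paper offers no proof beyond those references; your proposal cites the same sources, and the multiplicity/semi-continuity heuristic you recall for part (1) is exactly the justification sketched in the paper's introduction. The proposal is correct and takes essentially the same approach.
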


Our first main theorem is in the spirit of Theorem~\ref{theorem Zariski-nagata} (\ref{Uniform chevalley direct summands}) and greatly generalizes (\ref{Uniform Chevalley HKV}). If $R$ is a normal domain essentially of finite type over an algebraically closed field, $\fp\subseteq \fq \in \Spec(R)$, then we can give specific information on a constant $C$ so that $\fp^{(Cn)}\subseteq \fq^{(n)}$ for all $n\in\NN$.

\begin{maintheorem}
    \label{Main Theorem eft over an algebraically closed field}
    Let $k$ be an algebraically closed field and $R$ a normal domain essentially of finite type over $k$. Let $X\subseteq \mathbb{P}^n_k$ be an arithmetically normal projective closure of $\Spec(R)$, $S$ the coordinate ring of $X$, and $e(S)$ the Hilbert-Samuel multiplicity of $S$ with respect to its homogeneous maximal ideal. Then for all $\fp\subseteq \fq\in\Spec(R)$, for all $n\in\NN$, there is a containment of ideals
        \[
        \fp^{(e(S)n+1)}\subseteq \fq^{(n)}.
        \]
\end{maintheorem}

Our investigations are not limited to algebras over an algebraically closed field and some of our methods take inspiration from the proof of (\ref{Uniform Chevalley HKV}) presented in \cite[Theorem~2.3]{HKV}. We rely on several theorems of Rees from \cite{ReesValuationsAssociatedToIdeals, ReesDegree, ReesIzumisTheorem} in our studies. A corollary of Rees' theorems is that if $R$ is locally analytically irreducible and $\fq\in\Spec(R)$, then there exists a constant $C$, which may depend on $\fq$, such that for any $0\not=f\in \fq$, $e(R_\fq/fR_\fq)\leq C\ord_\fq(f)$. The reference used in this context that makes it unclear if the constant $C$ can be chosen independently of $\fq$ is the Izumi-Rees Theorem from \cite[Theorem~C]{ReesIzumisTheorem}. Essential to our investigations is the following ``Uniform Izumi-Rees Theorem.''

\begin{maintheorem}[Uniform Izumi-Rees Theorem]
    \label{Main Theorem Uniform Izumi Rees}
    Let $k$ be a field and $R$ a normal domain essentially of finite type over $k$. Then $R$ enjoys \textbf{the Uniform Izumi-Rees Property}, there exists a constant $C$ so that for all $\fq\in\Spec(R)$ if $0\not =f\in\fq $, then
    \[
    e(R_\fq/fR_\fq)\leq C\ord_\fq(f).
    \]
\end{maintheorem}

\begin{remark}
    See Corollary~\ref{corollary multiple definitions of Uniform Izumi-Rees Property} and Definition~\ref{definition uniform izumi rees} for an equivalent characterization of the Uniform Izumi-Rees Property.
\end{remark}

Significant progress has been made in uniformly comparing the powers, symbolic powers, and integral powers of ideals in regular rings. If $R$ is a regular ring, then $R$ enjoys the Uniform Symbolic Topology Property. If $I\subseteq R$ an ideal, and $h$ the maximal height of an associated prime of $I$. Then $I^{(hn)}\subseteq I^n$ for all $n\in\NN$, \cite{ELS,HHComparison,MaSchwedeSymbolic,MuryamaSymbolic}. Solutions to the Uniform Symbolic Topology Property Problem in regular rings have been transformative with wide-reaching connections between multiplier/test ideal theory, closure operations, perfectoid spaces, and big Cohen-Macaulay algebras in rings of all characteristics, c.f. \cite{HHAnnals, TakagiYoshida, MaSchwedeSymbolic, Dietz, RG, BhattCM}. The Uniform Symbolic Property for regular rings implies the following improvement of the Local Zariski-Nagata Theorem.

\begin{theorem}[{Corollary of the Uniform Symbolic Topology Theorem, \cite{ELS,HHComparison,MaSchwedeSymbolic,MuryamaSymbolic}}]
\label{theorem ustp with application to Zariski-Nagata}
Let $R$ be an excellent Noetherian domain of finite Krull dimension $d$. Then for all $\fp\subseteq \fq\in\Spec(R)$, if $R_\fq$ is non-singular and $\fp\subseteq \fq^{(t)}$, then $\fp^{(dn)}\subseteq \fq^{(tn)}$ for all $n\in\NN$. 
\end{theorem}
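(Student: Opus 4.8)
The plan is to reduce everything to a statement about the regular local ring $R_\fq$ and then invoke the Uniform Symbolic Topology Theorem for regular rings. First I would localize the hypothesis $\fp\subseteq\fq^{(t)}$ at $\fq$: since $\fq^{(t)}R_\fq=\fq^tR_\fq$ and $\fp R_\fq$ is a prime ideal of $R_\fq$ contained in $\fq R_\fq$, this yields $\fp R_\fq\subseteq(\fq R_\fq)^t$. Raising to the $n$th power and using $\fq^{(a)}\fq^{(b)}\subseteq\fq^{(a+b)}$ for symbolic powers, I get
\[
(\fp R_\fq)^n=\fp^nR_\fq\subseteq\big(\fq^{(t)}\big)^nR_\fq\subseteq\fq^{(tn)}R_\fq.
\]

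Next, because $R_\fq$ is non-singular it is a regular local ring, in particular a regular ring to which the Uniform Symbolic Topology Theorem applies. The only associated prime of the prime ideal $\fp R_\fq$ is $\fp R_\fq$ itself, of height $h:=\height(\fp R_\fq)\leq\dim R_\fq=\height\fq\leq d$. Hence $(\fp R_\fq)^{(hn)}\subseteq(\fp R_\fq)^n$, and since symbolic powers are nonincreasing in the exponent and $dn\geq hn$,
\[
(\fp R_\fq)^{(dn)}\subseteq(\fp R_\fq)^{(hn)}\subseteq(\fp R_\fq)^n\subseteq\fq^{(tn)}R_\fq.
\]

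Finally I would pass back to $R$ via the compatibility of symbolic powers with localization at $\fq$: for a prime $\fra\subseteq\fq$ and any $m$ one has $(\fra R_\fq)^{(m)}=\fra^{(m)}R_\fq$ and $\fra^{(m)}R_\fq\cap R=\fra^{(m)}$, the latter because any $s\notin\fq$ is already a unit in $R_\fra$; likewise $\fq^{(tn)}R_\fq\cap R=\fq^{(tn)}$. Applying this with $\fra=\fp$ and intersecting the displayed inclusion with $R$ gives $\fp^{(dn)}\subseteq\fq^{(tn)}$, as claimed.

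The mathematical content is entirely carried by the cited Uniform Symbolic Topology Theorem; the rest is bookkeeping of symbolic powers under localization plus the elementary bound $\height\fq\leq d$. The step deserving the most care is invoking the Uniform Symbolic Topology statement for the ideal $\fp R_\fq$ inside the \emph{regular} ring $R_\fq$ (not for $\fp$ inside $R$, which need not be regular), so that the height governing the exponent is $\height(\fp R_\fq)\leq\dim R_\fq\leq d$ rather than the ambient dimension applied to a possibly singular ring.
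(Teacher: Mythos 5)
Your proof is correct and is exactly the intended derivation: the paper states this result as an immediate corollary of the cited Uniform Symbolic Topology Theorem without writing out the details, and your argument supplies precisely that routine reduction (localize at $\fq$, apply the USTP to $\fp R_\fq$ inside the excellent regular local ring $R_\fq$ with $\height(\fp R_\fq)\leq d$, then contract back to $R$ using the compatibility of symbolic powers with localization).
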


Our next main result is analogous to Theorem~\ref{theorem ustp with application to Zariski-Nagata}, applicable to any normal domain essentially of finite type over a field, and is a generalization of Theorem~\ref{theorem Zariski-nagata} (\ref{Uniform Chevalley HKV}).

\begin{maintheorem}[Improved Uniform Chevalley Theorem]
    \label{Main Theorem Improved Uniform Chevalley general}
    Let $k$ be a field and $R$ a normal domain essentially of finite type over $k$. There exists a constant $C$ so that for all primes $\fp \subseteq \fq$, if $\fp\subseteq \fq^{(t)}$, then for every $n\in\NN$ there is a containment of ideals
        \[
        \fp^{(Cn)}\subseteq \fq^{(tn)}.
        \]
\end{maintheorem}

The paper is organized as follows. Section~\ref{Section valuations} contains preliminary materials on Rees valuations, multiplicity, symbolic powers, and the Izumi-Rees Theorem. Main Theorem~\ref{Main Theorem eft over an algebraically closed field} and Main Theorem~\ref{Main Theorem Uniform Izumi Rees} are proven in Section~\ref{Section Uniform Izumi}. Main Theorem~\ref{Main Theorem Improved Uniform Chevalley general} is then derived as an application of Main Theorem~\ref{Main Theorem Uniform Izumi Rees} and equicharacteristic multiplier/test ideal theory in Section~\ref{Section Uniform Chevalley}. 

\section*{Acknowledgements}
The author thanks Hanlin Cai, Dale Cutkosky, Alessandro De Stefani, Elo\'{i}sa Grifo, Craig Huneke, Daniel Katz, Linquan Ma, Sarasij Maitra, Shravan Patankar, Karl Schwede, Austyn Simpson, and Kevin Tucker for their time and discussions. The author especially thanks Hanlin Cai, Linquan Ma, Karl Schwede, and Kevin Tucker. 

The author sends a most grateful gesture of appreciation to an anonymous referee for their valuable feedback and suggestions of improvement to the article.

\section{Integral Closure, Valuations, Multiplicity, and the Izumi-Rees Theorem}\label{Section valuations}

Assume that $R$ is an excellent reduced ring, $K$ the total ring of fractions of $R$, and $\overline{R}$ its integral closure in $K$.

\subsection{Integral Closure of Ideals and Huneke's Uniform Theorems}

The integral closure of an ideal $I\subseteq $ is the ideal $\overline{I}$ consisting of elements $x\in R$ that satisfy an equation of the form $x^t+a_1x^{t-1}+\cdots + a_{t-1}x + a_t=0$ so that $a_i\in I^i$ for all $1\leq i\leq t$, see \cite[Chapter~1]{SwansonHuneke} for an introduction on the integral closures of ideals.

Our work relies on the Uniform Artin-Rees and Uniform Brian\c{c}on-Skoda Theorems of \cite{HunekeUniformBounds}. We recall the Uniform Artin-Rees and Brian\c{c}on-Skoda properties here for convenience. 

\begin{definition}
    \label{definition uniform properties}
    Let $R$ be a Noetherian ring. 
    \begin{itemize}
        \item An ideal $J\subseteq R$ has the \emph{Uniform Artin-Rees Property} if there exists a constant $A$ so that for every ideal $I\subseteq R$ and $n\in\mathbb{N}$,
        \[
        J\cap I^{n+A}\subseteq JI^n.
        \]
        The constant $A$ is a \emph{Uniform Artin-Rees bound} of the ideal $J\subseteq R$. The ring $R$ has the \emph{Uniform Artin-Rees Property} if every ideal of $R$ enjoys the Uniform Artin-Rees Property.

        \smallskip
        
        \item The ring $R$ satisfies the \emph{Uniform Brian\c{c}on-Skoda Property} if there is a natural number $B$ such that for all ideals $I\subseteq R$ for all $n\in\mathbb{N}$,
        \[
        \overline{I^{n+B}}\subseteq I^n.
        \]
        The constant $B$ is a \emph{Uniform Brian\c{c}on-Skoda bound} of $R$.
    \end{itemize}
\end{definition}

Huneke conjectured that any excellent Noetherian ring $R$ of finite Krull dimension possesses the Uniform Artin-Rees property \cite[Conjecture~1.3]{HunekeUniformBounds}, and that any excellent Noetherian reduced ring of finite Krull dimension exhibits the Uniform Brian\c{c}on-Skoda Property \cite[Conjecture~1.4]{HunekeUniformBounds}. Strong support for these conjectures arises from the same paper, where they are proven under certain additional mild hypotheses.

\begin{theorem}[{\cite[Uniform Artin-Rees and Brian\c{c}on-Skoda Theorems]{HunekeUniformBounds}}]
    \label{Thm Huneke's Uniform Theorems}
    Let $R$ be a Noetherian ring which is either
    \begin{itemize}
        \item essentially of finite type over a local ring;
        \item of prime characteristic and $F$-finite;
        \item essentially of finite type over $\mathbb{Z}$;
    \end{itemize}
    then $R$ enjoys the Uniform Artin-Rees Property. If in addition to one of the above properties the ring $R$ is excellent and reduced, then $R$ enjoys the Uniform Brain\c{c}on-Skoda Property.
\end{theorem}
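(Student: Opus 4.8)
This is a theorem of Huneke \cite{HunekeUniformBounds}; here is how I would organize a proof. \textbf{Reduction to the complete local case.} Both displayed inclusions are stable under localization and, suitably interpreted, under completion, and each of the three listed hypotheses — together with finiteness of the Krull dimension, which holds in all three cases — passes to localizations and completions. So a Noetherian-induction argument, using prime avoidance on the finitely many relevant primes, reduces the problem to producing a single bound valid after localizing and completing at each maximal ideal. We may thus assume $(R,\fm)$ is complete local of dimension $d$; for the Artin--Rees statement first replace $R$ by $R_{\reduced}$, which affects the bound only by a controlled amount depending on the nilradical, so in either case $R$ is reduced. By Cohen's structure theorem $R$ is module-finite over a complete regular local ring $A$ of dimension $d$ (a power series ring over a field or over a complete discrete valuation ring). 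Since $R$ is reduced, $R\otimes_A\Frac(A)$ is a finite product of fields, so the Jacobian ideal $\mathfrak J=\mathfrak J_{R/A}$ has positive height; fix a nonzerodivisor $c\in\mathfrak J$.

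\textbf{Brian\c{c}on--Skoda pulled back from $A$.} On the regular ring $A$ the classical Brian\c{c}on--Skoda theorem gives $\overline{\mathfrak a^{\,n+d-1}}\subseteq\mathfrak a^{\,n}$ for every ideal $\mathfrak a\subseteq A$ and every $n$. The Lipman--Sathaye theorem provides the relative version over the module-finite extension $A\subseteq R$: if $I\subseteq R$ is generated by $\ell$ elements, then $\mathfrak J\cdot\overline{I^{\,m}}\subseteq I^{\,m-\ell+1}$ for all $m\geq\ell$. Every ideal of $R$ admits a reduction generated by at most $d$ elements — after, if necessary, the harmless faithfully flat extension enlarging the residue field to be infinite — and passing to a reduction leaves $\overline{I^{\,m}}$ unchanged. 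Combining these,
\[
c\,\overline{I^{\,n+d-1}}\subseteq I^{\,n}\qquad\text{for every ideal }I\subseteq R\text{ and every }n.
\]

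\textbf{Removing $c$ and finishing.} The bootstrap that eliminates the fixed multiplier $c$ is the core of the argument. The inclusion just displayed holds for every ideal, in particular for all powers $I^{\,j}$; and localizing at the finitely many minimal primes of $cR$, each of height at most $d$, makes the relevant quotients modules of finite length, so iterating the division-by-$c$ step a number of times controlled only by $c$ (not by $I$) absorbs the multiplier and yields a uniform constant $B$ with $\overline{I^{\,n+B}}\subseteq I^{\,n}$ for all $I$ and $n$. This is the Uniform Brian\c{c}on--Skoda Property. (If one prefers to run the argument directly on a general reduced $R$ with module-finite normalization $\overline R$, one uses a nonzerodivisor in the conductor of $\overline R$ over $R$ in place of $c$, together with the Brian\c{c}on--Skoda statement for the normal ring $\overline R$, and the same finite-length bootstrap.) The Uniform Artin--Rees Property follows from the same circle of ideas: given ideals $I,J$ and $x\in I^{\,n+k}\cap J$ one has $x^{2}\in I^{\,n+k}J$, and feeding this, together with the uniform bound applied to ideals built from $I$ and $J$ and the uniform control of the discrepancy between $\overline{I^{\,m}}$ and $I^{\,m}$ afforded by the normalization step, into standard integral-dependence manipulations produces $I^{\,n+k}\cap J\subseteq I^{\,n}J$ with $k$ depending only on $R$.

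\textbf{Main obstacle.} The delicate point is the uniform removal of the fixed Jacobian (or conductor) element $c$: the naive ``divide repeatedly by $c$'' only yields a bound depending on the ideal $I$, and promoting it to a bound depending on $R$ alone is exactly where one must exploit both the finite-length phenomenon at the finitely many primes of $cR$ and the stability of the inclusion $c\,\overline{I^{\,n+d-1}}\subseteq I^{\,n}$ under replacing $I$ by its powers.
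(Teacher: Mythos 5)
The paper does not prove this statement: Theorem~\ref{Thm Huneke's Uniform Theorems} is quoted from \cite{HunekeUniformBounds} and used as a black box (via the citation in Corollary~\ref{Corollary Improved Uniform Chevalley for eft over field}), so there is no internal proof to compare yours against. Measured against Huneke's actual argument, your sketch assembles the right raw materials for the Brian\c{c}on--Skoda half --- Cohen structure theory, a module-finite extension of a regular ring, and Lipman--Sathaye (or test elements in characteristic $p$) producing a fixed nonzerodivisor $c$ with $c\,\overline{I^{\,n+d-1}}\subseteq I^{\,n}$ for all $I$ and $n$ --- but the two steps you yourself flag as delicate are exactly where the proposal fails as written.

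First, the reduction: producing a bound ``valid after localizing and completing at each maximal ideal'' is not a reduction, because the rings in question have infinitely many maximal ideals and a bound for each one separately is precisely not a uniform bound. The entire content of the theorem is uniformity across $\Spec(R)$, which Huneke obtains by generic freeness/smoothness together with Noetherian induction on closed subsets of $\Spec(R)$: one constant works on a dense open set, and one inducts on the complement. Second, the removal of $c$: the ``finite-length bootstrap at the minimal primes of $cR$'' does not give a bound independent of $I$; the modules $\overline{I^{\,n+k}}/I^{\,n}$ are killed by $c$ but are not of finite length, and localizing at the minimal primes of $(c)$ discards exactly the embedded components you would need to control. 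Huneke's mechanism is different: apply the Uniform Artin--Rees Property to the principal ideal $(c)$ to get $(c)\cap I^{\,n+A}\subseteq cI^{\,n}$, combine with $c\,\overline{I^{\,n+A+k}}\subseteq I^{\,n+A}\cap (c)$, and cancel the nonzerodivisor $c$ --- the very device this paper reuses in hypotheses (\ref{TE assumption}) and (\ref{UAR assumption}) of Theorem~\ref{theorem how to acheive uniform chevalley}. This also reverses your logical order: Uniform Artin--Rees must be proved first and independently (by the induction just described, not by the $x^2\in I^{\,n+k}J$ manipulation, which only places $x$ in integral closures of products and does not recover $I^{\,n}J$), and Uniform Brian\c{c}on--Skoda is then deduced from it.
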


\subsection{Rees Valuations} Continue to assume $R$ is an excellent reduced ring and $I\subseteq R$ is an ideal. Introduce a variable $T$, the \emph{extended Rees algebra of $I$} is the $\mathbb{Z}$-graded ring
\[
R[IT, T^{-1}] = \cdots \oplus RT^{-2} \oplus RT^{-1} \oplus R \oplus IT \oplus I^2T^2 \oplus \cdots.
\]
The $n$th degree component of $R[IT,T^{-1}]$ is $I^nT^n$ for $n > 0$, coinciding with $R$ in non-positive degrees. In particular,
\[
\frac{R[IT, T^{-1}]}{T^{-1}R[IT,T^{-1}]} \cong \frac{R}{I}\oplus \frac{I}{I^2}T \oplus \frac{I^2}{I^3}T^2\oplus \cdots 
\]
is the \emph{associated graded ring of $I$} and denoted by $\Gr_I(R)$. If $I=(f_1,f_2,\ldots,f_t)$, then the homogeneous localizations of $R[IT,T^{-1}]$ at the elements $f_iT$ give the Laurent polynomial ring over the affine charts of the blowup:
\[
R[(f_1,f_2,\ldots,f_t)T,T^{-1}]_{f_iT} = R\left[\frac{f_1}{f_i},\frac{f_2}{f_i},\ldots,\frac{f_t}{f_i}\right][T,T^{-1}].
\]

Let $\overline{R[IT, T^{-1}]}$ denote the integral closure of $R[IT, T^{-1}]$ in its total ring of fractions $K(T)$. If $n\geq 1$, we have $T^{-n}\overline{R[IT, T^{-1}]}\cap R = \overline{I^n\overline{R}}\cap R=\overline{I^n}$, and $\overline{R[IT,T^{-1}]}$ coincides with $\overline{R}$ in non-positive degrees. Notably, if $R=\overline{R}$, i.e., $R$ is \emph{normal}, then
\[
\overline{R[IT, T^{-1}]} = \cdots \oplus RT^{-2} \oplus RT^{-1} \oplus R \oplus \overline{I}T \oplus \overline{I^2}T^2 \oplus \cdots.
\]

Homogeneous localizations of $\overline{R[IT,T^{-1}]}$ at the elements $f_iT$ give the Laurent polynomial ring of the affine charts of the normalized blowup:
\[
\overline{R[(f_1,f_2,\ldots,f_t)T,T^{-1}]}_{f_iT} = \overline{R\left[\frac{f_1}{f_i},\frac{f_2}{f_i},\ldots,\frac{f_t}{f_i}\right]}\left[T,T^{-1}\right].
\]

The \emph{Rees valuations of $I$}, denoted $\mathcal{R}_I$, are the discrete valuation rings obtained through homogeneous localization of the associated primes of $T^{-1}\overline{R[IT,T^{-1}]}$.\footnote{The normalized extended Rees algebra $\overline{R[IT,T^{-1}]}$ enjoys Serre's conditions $(S_2)$ and $(R_1)$. As $T^{-1}\overline{R[IT,T^{-1}]}$ is a principal ideal, every associated prime of $T^{-1}\overline{R[IT,T^{-1}]}$ has height $1$ by the $(S_2)$ property, and localizations at such primes are discrete valuation rings by the $(R_1)$ property.} If $\nu\in\mathcal{R}_I$, then we typically denote the corresponding minimal primes of $T^{-1}\overline{R[IT,T^{-1}]}$ as $Q_\nu$. Minimal primes of $T^{-1}\overline{R[IT,T^{-1}]}$ are the \emph{exceptional primes} of $\overline{R[IT,T^{-1}]}$.

\begin{example}
    Let $R=\CC[x_1,x_2,x_3]/(x_1x_2+x_3^3)$ and $\fm=(x_1,x_2,x_3)$. Then $R$ is a normal domain with an isolated rational double point singularity at $\fm$. It is not difficult to show that $\fm^n=\overline{\fm^n}$ for all $n\in\NN$ and therefore
    \[
    R[\fm T,T^{-1}]=\overline{R[\fm T,T^{-1}]}\cong \frac{\CC[y_1,y_2,y_3,T^{-1}]}{(y_1y_2+T^{-1}y_3^3)}.
    \]
    The injective $\CC$-algebra map $R\to R[\fm T,T^{-1}]$ is defined by $x_i\mapsto T^{-1}y_i$ for all $1\leq i\leq 3$. Observe that $T^{-1}\overline{R[\fm T,T^{-1}]}=(T^{-1},y_1)\cap (T^{-1},y_2)$. Therefore $\RR_{\fm}$ is a $2$-element set with Rees valuation rings $\left(\CC[y_1,y_2,y_3]/(y_1y_2+T^{-1}y_3^3)\right)_{(T^{-1},y_1)}$ and $\left(\CC[y_1,y_2,y_3]/(y_1y_2+T^{-1}y_3^3)\right)_{(T^{-1},y_2)}$.
\end{example}

\begin{remark}
The Rees valuations of an ideal $I\subseteq R$ are in bijective correspondence with exceptional components of the normalized blowup of $I$, $\overline{\Bl(I)}\to \Spec(R)$. Algebraic properties of the (normalized) extended Rees algebra of $I$ reflect geometric properties of the (normalized) blowup of $I$ and conversely.
\end{remark}

\begin{remark}
    A \emph{divisorial valuation of $R$} is a discrete valuation $\nu: K\to \mathbb{Z}$, non-negative on $R$, with the additional property that if $\fp_\nu$ the center of $\nu$ in $R$ and $\fm_\nu$ the maximal ideal of $V_\nu$, then $\mbox{tr.deg}_{R_{\fp_\nu}/\fp_\nu R_{\fp_\nu}}\left(V/\fm_\nu\right) = \height(\fp_\nu)-1$. Every Rees valuation of $R$ is a divisorial valuation and conversely, see \cite[Lemma~6.1]{CutkoskySarkar}.
\end{remark}

The following theorem is a list of known properties of Rees valuations used throughout this article.

\begin{theorem}[Properties Rees Valuations]
    \label{theorem standard facts about integral closure and valuations}
    Let $R$ be an excellent Noetherian reduced ring and $I\subseteq R$ an ideal not contained in a minimal prime of $R$. Let $\mathcal{R}_I$ denote the set of Rees valuations of $I$. For each $\nu\in\RR_I$ let $Q_\nu$ be the corresponding exceptional prime of $\overline{R[IT,T^{-1}]}$ and $\nu(I)$ the unique natural number so that
    \[
    T^{-1}\overline{R[IT,T^{-1}]}=\bigcap_{\nu\in\RR_I} Q_\nu^{(\nu(I))}.
    \]
    \begin{enumerate}
        \item\label{Valuation Criteria} Let $f\in R$. Then $f\in \overline{I^n}$ if and only if $\nu(f)\geq n\nu(I)$ for all $\nu\in\mathcal{R}_I$, \cite{ReesValuationsAssociatedToIdeals}.\footnote{This criteria for containment in the integral closure of the power of an ideal is Rees' \emph{valuation criteria.}} Consequently,
        \begin{itemize}
            \item If $W$ is a multiplicative set then the Rees valuations of $IR_W$ are the Rees valuations of $I$ whose centers do not intersect $W$, \cite[Proposition~10.4.1]{SwansonHuneke}.
            \item $\mathcal{R}_I=\mathcal{R}_{\overline{I}}$;
            \item If $t\in\NN$, then $\mathcal{R}_I=\mathcal{R}_{I^t}$.
        \end{itemize}
        \item\label{associated primes stabilize} Let $\fp\in\Spec(R)$. Then $\fp$ is an associated prime of $\overline{I^n}$ for some $n$ if and only if $\fp$ is a center of a Rees valuation of $I$. If $\fp$ is an associated prime of $\overline{I^{n_0}}$ then $\fp$ is an associated prime of $\overline{I^n}$ for all $n\geq n_0$, \cite[Theorem~2.4 and Theorem~2.7]{RatliffAsymptoticPrimesIntegral}.
        
        \item\label{analytic spread criteria for center} If $\fp\in\Spec(R)$ is of height $h$, then $\fp$ is a center of a Rees valuation of $I$ if and only if $IR_\fp$ has analytic spread $h$, \cite[Theorem~3]{McAdamAsymptotic}.
        
        \item\label{affine chart criteria} If $(R,\fm,k)$ is local and $a\in I$ has the property that $\nu(a)=\nu(I)$ for every $\nu\in\mathcal{R}_I$, then $\mathcal{R}_I$ is the set of all valuation domains of the associated primes of the principal ideal generated by $a$ in the affine chart $\overline{R\left[\frac{I}{a}\right]}$ of the normalized blowup of $I$. Such an element $a$ exists if $R$ has infinite residue fields, \cite[Proposition~10.2.5]{SwansonHuneke}. 
        
        \item\label{Minimal reduction criteria} If $(R,\fm,k)$ is local with infinite residue field and $I\subseteq R$ is $\fm$-primary, then $f\in I$ is part of a minimal reduction of $I$ if and only if $\nu(f)=\nu(I)$ for all $\nu\in\mathcal{R}_I$, \cite[Page~437-438]{Sally}.
    \end{enumerate}
\end{theorem}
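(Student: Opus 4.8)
The plan is to recall that each of the five statements is a known result with a reference already cited in the statement, so the ``proof'' is really an organized exposition knitting those references together, with the one genuinely new observation being that the three bullet points under (\ref{Valuation Criteria}) follow formally from Rees' valuation criterion. First I would set up the standard apparatus: normality of $\overline{R[IT,T^{-1}]}$ in codimension one, so that the exceptional primes $Q_\nu$ (the minimal primes of the principal ideal $T^{-1}\overline{R[IT,T^{-1}]}$) each have height $1$ by $(S_2)$ and give DVRs by $(R_1)$; the number $\nu(I)$ is then defined as the order of vanishing of $T^{-1}$ along $Q_\nu$, and the primary decomposition $T^{-1}\overline{R[IT,T^{-1}]}=\bigcap_\nu Q_\nu^{(\nu(I))}$ is just the height-one primary decomposition of a principal ideal in an $(S_2)$ ring. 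Intersecting with the degree-$n$ graded piece $R$ (via $T^{-n}$) and using $T^{-n}\overline{R[IT,T^{-1}]}\cap R=\overline{I^n}$, which was established in the preliminary discussion, immediately yields: $f\in\overline{I^n}\iff T^{-n}f\in\overline{R[IT,T^{-1}]}\iff \nu(T^{-n}f)\geq 0$ for all $\nu$ $\iff n\nu(I)\le\nu(f)$ for all $\nu\in\mathcal R_I$, giving (\ref{Valuation Criteria}); here I identify each exceptional prime with its associated valuation $\nu$ on $K$ by restriction.

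Next I would dispatch the three bullets of (\ref{Valuation Criteria}) from the valuation criterion alone. For the localization statement: the valuation criterion shows that the integral closure filtration $\{\overline{(IR_W)^n}\}$ on $R_W$ is cut out by exactly those $\nu\in\mathcal R_I$ whose center misses $W$ (localization commutes with integral closure of ideals, and a valuation survives localization iff its center does not meet $W$), so by uniqueness of the minimal set of valuations defining a filtration, $\mathcal R_{IR_W}$ is that subset; I would cite \cite[Proposition~10.4.1]{SwansonHuneke}. For $\mathcal R_I=\mathcal R_{\overline I}$ and $\mathcal R_I=\mathcal R_{I^t}$: since $\overline{\overline I^{\,n}}=\overline{I^n}$ and $\overline{(I^t)^n}=\overline{I^{tn}}$, the filtrations $\{\overline{\overline I^{\,n}}\}$ and (after reindexing) $\{\overline{(I^t)^n}\}$ are the same as, respectively, $\{\overline{I^n}\}$ and a cofinal subsequence of it, and the valuation criterion together with the rescaling $\nu(I^t)=t\nu(I)$ forces the same set of Rees valuations. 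Then (\ref{associated primes stabilize}), (\ref{analytic spread criteria for center}), (\ref{affine chart criteria}), and (\ref{Minimal reduction criteria}) I would simply state as quotations of \cite[Theorems~2.4,~2.7]{RatliffAsymptoticPrimesIntegral}, \cite[Proposition~3.1]{KatzValidashti} (with the \loccit remark pointing to \cite[Corollary~7.4]{RatliffAsymptoticPrimesIntegral}), \cite[Proposition~10.2.5]{SwansonHuneke}, and \cite[pp.~437--438]{Sally}, respectively, perhaps adding one sentence of explanation in each case (e.g. for (\ref{affine chart criteria}), that localizing $\overline{R[I/a]}$ at $a$ inverts $T$ and identifies the exceptional locus, so the associated primes of $(a)$ on that chart are exactly the $Q_\nu$; and that the existence of such an $a$ when $k$ is infinite is a prime-avoidance / general-element argument).

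The main obstacle, such as it is, is bookkeeping rather than depth: one must be careful that the valuation criterion is phrased so that the $Q_\nu$ are genuinely the \emph{minimal} set of valuations — i.e.\ that no proper subset of $\mathcal R_I$ defines the integral closure filtration — since the bullet points about $\mathcal R_{IR_W}$, $\mathcal R_{\overline I}$, $\mathcal R_{I^t}$ all rely on this uniqueness. This is exactly the content of Rees' theorem \cite{ReesValuationsAssociatedToIdeals} and is reproved in \cite[Chapter~10]{SwansonHuneke}, so I would cite it for that minimality and not reprove it. I expect the write-up to be short: a paragraph establishing (\ref{Valuation Criteria}) and its bullets from the normalized extended Rees algebra, and then one or two sentences per remaining item citing the literature.
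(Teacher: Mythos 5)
Your proposal is correct and matches the paper's treatment: the paper states this theorem as a compendium of known facts with the same citations and offers no proof of its own, relying (as you do) on the preliminary setup $T^{-n}\overline{R[IT,T^{-1}]}\cap R=\overline{I^n}$ and the $(S_2)$/$(R_1)$ footnote to make sense of the exceptional primes and the valuation criterion. Your extra derivation of the three bullet points from minimality of the Rees valuation set is a reasonable, standard elaboration of what the paper leaves to the cited references.
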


\subsection{Gaussian Extensions of Valuations}
Suppose that $X$ is a variable and consider $R\to R[X]$. The fraction field of $R[X]$ is $K(X)$. If $\nu$ is a $K$-valuation then $\nu$ extends to a $K(X)$-valuation $\nu'$ via the \emph{Gaussian extension of $\nu$}; if $f\in K[x]$, $f=a_0+a_1x +\cdots +a_nx^n$ with $a_i\in K$, then $\nu'(f)=\min\{\nu(a_0),\nu(a_1),\ldots,\nu(a_n)\}$, see \cite[Remark~6.1.3]{SwansonHuneke}.

\begin{lemma}
    \label{lemma reduction to an infinite field}
    Let $R$ be a reduced excellent ring, $X$ a variable, $R'=R[X]$, $I\subseteq R$ an ideal of $R$ not contained in a minimal prime of $R$, and $I'=IR'$. Then
    \begin{enumerate}
        \item The ring $R$ is normal if and only if $R'$ is normal.
        \item For every $n\in\NN$, $\overline{I^n}R'=\overline{(I')^n}$.
        \item For every $n\in\NN$, $\overline{I^n} = \overline{(I')^n}\cap R$.
    \end{enumerate}    
    Moreover, there is a bijection of Rees valuations $\mathcal{R}_I$ with the Rees valuations $\mathcal{R}_{I'}$, given by Gaussian extension of valuations from $R$ to $R[X],$ with the following properties:
    
    \begin{enumerate}  
    \setcounter{enumi}{3}
        \item If $\nu \in \mathcal{R}_I$ and $\nu'$ the corresponding element of $\mathcal{R}_{I'}$  then $\nu(I)=\nu(I')$.
        \item If $\nu \in \mathcal{R}_I$ and $\nu'$ the corresponding element of $\mathcal{R}_{I'}$ then for all $f\in R$, $\nu(f)=\nu'(f)$. 
        \item If $\fp_\nu$ is the center of $\nu\in\mathcal{R}_I$ then the center of the corresponding Rees valuation $\nu'\in\mathcal{R}_{I'}$ is $\fp_{\nu}R'$.
        \item\label{basically same exceptional prime} If $\nu\in\RR_{\fm}$, $\nu'$ the Gaussian extension of $\nu$ to $R[X]$, $Q_\nu$ and $Q_{\nu'}$ the respective exceptional primes of $\overline{R[IT, T^{-1}]}$ and $\overline{R'[I'T,T^{-1}]}$ respectively, then
        \[
        Q_{\nu'} = Q_{\nu}\overline{R'[I'T,T^{-1}]}.
        \]
        In particular, if $(R,\fm,k)$ is local and $I$ is $\fm$-primary, then
        \[
        e\left(\frac{\overline{R[I T,T^{-1}]}}{Q_\nu}\right) = e\left(\frac{\overline{R'[I'T,T^{-1}]}}{Q_{\nu'}}\right).
        \]
    \end{enumerate}
\end{lemma}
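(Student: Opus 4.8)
The plan is to deduce everything from the single ring-theoretic identity
\[
\overline{R'[I'T,T^{-1}]} \;=\; \overline{R[IT,T^{-1}]}\,[X],
\]
which rests on the fact that normalization commutes with adjoining a variable over an excellent (hence Nagata) reduced ring: writing $\overline{R[IT,T^{-1}]}$ as a finite product of normal Noetherian domains reduces this to the domain case, where $\overline{D[X]}=\overline D[X]$ because $\overline D[X]$ is normal, is integral over $D[X]$, and has fraction field $\Frac(D)(X)$. Part~(1) follows by the same principle applied directly to $R$: since $\overline{R[X]}=\overline R[X]$, the ring $R[X]$ is normal iff $R[X]=\overline R[X]$ iff (comparing constant terms) $R=\overline R$. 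I also record that $I'$ is not contained in a minimal prime $\fp R'$ of $R'=R[X]$, since $I'\cap R=I$, so that $\mathcal R_{I'}$ is defined.

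Write $B:=\overline{R[IT,T^{-1}]}$. The exceptional primes of $B$ are the minimal primes $Q_\nu$ ($\nu\in\mathcal R_I$) of the principal ideal $T^{-1}B$; since $T^{-1}B[X]=(T^{-1}B)[X]$ and each $Q_\nu B[X]=Q_\nu[X]$ is prime (with quotient the domain $(B/Q_\nu)[X]$), the exceptional primes of $\overline{R'[I'T,T^{-1}]}=B[X]$ are exactly the $Q_\nu B[X]$. This is the bijection $\mathcal R_I\to\mathcal R_{I'}$ together with the first displayed equation of~(7). Localizing, $B[X]_{Q_\nu B[X]}=(B_{Q_\nu})[X]_{\fn[X]}$ with $\fn$ the maximal ideal of the DVR $B_{Q_\nu}$; this is again a DVR, and its valuation sends $\sum a_i X^i$ (coefficients in $\Frac(B_{Q_\nu})$) to $\min_i\nu(a_i)$. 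Hence $\nu'$ is exactly the Gaussian extension of $\nu$, which is~(5), and the bijection is the one described in the lemma.

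The remaining assertions are now formal consequences of this Gaussian description. For~(4): localizing $T^{-1}B=\bigcap_\nu Q_\nu^{(\nu(I))}$ at $Q_\nu$ identifies $\nu(I)$ with the $\nu$-order of $T^{-1}$; running the same argument over $B[X]$ and using that $T^{-1}$ is a constant in $X$, the Gaussian formula yields $\nu'(I')=\nu(I)$. For~(6): the center of $\nu'$ in $R'$ is $\{\sum a_i X^i : \min_i\nu(a_i)>0\}=\fp_\nu R'$. For~(2): by Rees' valuation criterion (Theorem~\ref{theorem standard facts about integral closure and valuations}(\ref{Valuation Criteria})) applied over $R'$, an element $g=\sum a_i X^i$ of $R'$ lies in $\overline{(I')^n}$ iff $\min_i\nu(a_i)\ge n\nu(I)$ for every $\nu\in\mathcal R_I$, iff each $a_i\in\overline{I^n}$ by the criterion over $R$; thus $\overline{(I')^n}=\overline{I^n}[X]=\overline{I^n}R'$, and~(3) is the restriction $\overline{I^n}[X]\cap R=\overline{I^n}$.

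Finally, for the multiplicity clause of~(7), suppose $(R,\fm,k)$ is local and $I$ is $\fm$-primary, and put $A:=B/Q_\nu$, a $\mathbb Z$-graded ring with homogeneous maximal ideal $\mathfrak M$, of dimension $d$; then $\overline{R'[I'T,T^{-1}]}/Q_{\nu'}\cong A[X]$, with homogeneous maximal ideal $\mathfrak M A[X]+XA[X]$. A direct Hilbert-function count gives
\[
\ell\!\left(A[X]/(\mathfrak M A[X]+XA[X])^n\right)\;=\;\sum_{m=1}^{n}\ell(A/\mathfrak M^m),
\]
so comparing the leading terms (the ambient dimension having grown by one) yields $e(A[X])=e(A)$. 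The single substantive input here is the normalization/polynomial-extension identity at the top; everything after it is bookkeeping with Gaussian valuations, which I expect to be routine.
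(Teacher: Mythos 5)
Your proof is correct and rests on the same key identity the paper uses, namely $\overline{R'[I'T,T^{-1}]}\cong\overline{R[IT,T^{-1}]}[X]$ together with the Gaussian description of the extended valuations; you simply fill in the details the paper dismisses as ``simple to check'' (the paper handles part~(1) by flat ascent/descent of normality along $R\to R[X]$ rather than via $\overline{R[X]}=\overline{R}[X]$, but that is immaterial). The only point worth flagging is the multiplicity clause of~(7): since $R'=R[X]$ is not local, the right-hand multiplicity needs an interpretation, and in the paper's applications (e.g.\ Theorem~\ref{theorem uniform izumi arithmetically normal}) the relevant object is the localization at $\fm R[X]$, where the equality follows from the fact that the flat local map $R\to R[X]_{\fm R[X]}$ preserves lengths of the graded pieces of $\overline{R[IT,T^{-1}]}/Q_\nu$ — your computation at a maximal ideal containing $X$ yields the same number $e\bigl(\overline{R[IT,T^{-1}]}/Q_\nu\bigr)$, so the stated equality holds either way.
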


\begin{proof}
     Let $K$ be the fraction field of $R$. Then $K(X)$ is the fraction field of $R'$.  If $\fp$ is a prime ideal of $R$ then $\fp' = \fp R'$ is a prime ideal of $R'$ whose height agrees with the height of $\fp$ and $\fp'\cap R=\fp$. The map $R\to R'$ is faithfully flat with regular fibers, therefore $R$ is normal if and only if $R'$ is normal. If $I\subseteq R$ is an ideal, then it is simple to check that $\overline{I^n}R'=\overline{(I')^n}$ and $\overline{I^n} = \overline{(I')^n}\cap R$.
    
     For each divisorial valuation $\nu$ of $R$ let $\nu'$ be the Gaussian extension of $\nu$ to $R'$. Then the collection of valuations $\{\nu'\mid \nu\in \mathcal{R}_I\}$ will form the Rees valuations of $I'$ and has the described properties of the lemma. Moreover, if $\nu,\nu', Q_\nu,$ and $Q_{\nu'}$ are as in the statement of (\ref{basically same exceptional prime}), then $\overline{R'[I'T,T^{-1}]}\cong \overline{R[I T,T^{-1}]}[X]$ and hence $Q_{\nu'} = Q_{\nu}\overline{R'[I'T,T^{-1}]}$.
\end{proof}

\subsection{Multiplicity and Valuations}

Let $R$ be an excellent reduced ring.  If $M$ is a finite length $R$-module then $\mathcal{L}(M)$ is the length of $R$. If $(R,\fm,k)$ is local, $M$ a non-zero and finitely generated $R$-module of Krull dimension $d$, and $I\subseteq R$ an $\fm$-primary ideal, then the (Hilbert-Samuel) multiplicity of $M$ with respect to $I$ is  $e_I(M):=\lim_{n\to\infty}\frac{d!\mathcal{L}(M/I^nM)}{n^{d}}$. If $(R,\fm,k)$ is local then $e(M)$ is the Hilbert-Samuel multiplicity of $M$ with respect to the maximal ideal. If $R$ is $\ZZ$-graded, $I\subseteq R$ a homogeneous ideal so that $R/I$ is of finite length and an Artin local ring in degree $0$, and $M=\oplus_{n\in \ZZ}M_n$ a finitely generated graded $R$-module of dimension $d$, then the multiplicity of $M$ with respect to $I$ is $e_I(M)=\lim_{n\to \infty}\frac{(d-1)!\ell(M_n/IM\cap M_n)}{n^{d-1}}$. In particular, if $(R,\fm,k)$ is local $I\subseteq R$ an $\fm$-primary ideal, then 
\[
e_I(R)=e_{(IT,T^{-1})}\left(\frac{R[I T,T^{-1}]}{T^{-1}R[IT,T^{-1}]}\right).
\]

If $R$ is excellent and equidimensional then Hilbert-Samuel multiplicity defines an upper semi-continuous function $\Spec(R)\to \NN$ by $\fp\mapsto e(R_\fp)$, \cite[Theorem~4]{Bennett}. We implicitly use this result throughout this article when we assert either of the following consequences of Nagata's criteria for openness, \cite[Theorem~24.2]{Matsumura}, and quasi-compactness of $\Spec(R)$:
\begin{itemize}
    \item If $\fp\subseteq \fq$ are prime ideals then $e(R_\fp)\leq e(R_\fq)$.
    \item There exists an upper bound $e$ for the Hilbert-Samuel multiplicity of each localization of $R$ at a prime ideal, i.e., if $\fp\in\Spec(R)$, then $e(R_\fp)\leq e$.
\end{itemize}

The following theorem is Rees' \emph{Order Ideal Theorem}, which plays a crucial role in the comparison of divisorial valuations with differing centers in this article. We augment The Order Ideal Theorem statement with insights not explicitly stated in Rees' original statements. Instead, the additional insights can be derived from Rees' proof. We provide a streamlined and somewhat novel proof of Rees' result. Doing so eliminates extensive terminology translation and justifications that exceed the following presentation, resulting in a more concise treatment of the necessary materials.

\begin{theorem}[{\cite[Rees' Order Ideal Theorem]{ReesDegree}}]
\label{theorem Rees multiplicity formula}
Let $(R,\fm,k)$ be an equidimensional local ring and of Krull dimension $d$. Suppose that $R$ is analytically reduced and $I$ an $\fm$-primary ideal. For each Rees Valuation $\nu\in\mathcal{R}_I$ let $Q_\nu$ be the corresponding exceptional prime of the normalized extended Rees algebra $\overline{R[IT,T^{-1}]}$. If $f$ is an element of $\fm$ avoiding all minimal primes of $R$, then
\[
e_{\frac{(I,f)}{(f)}}\left(\frac{R}{fR}\right) = \sum_{\nu \in \mathcal{R}_I} \nu(f)e\left(\frac{\overline{R[IT,T^{-1}]}}{Q_\nu}\right).
\]
\end{theorem}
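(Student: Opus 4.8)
The plan is to reduce the statement to a computation about the normalized extended Rees algebra $\mcA := \overline{R[IT,T^{-1}]}$ and then extract the multiplicity of $R/fR$ by slicing the principal ideal $f\mcA$ against the exceptional divisor $T^{-1}\mcA$. First I would reduce to the case where the residue field $k$ is infinite: by Lemma~\ref{lemma reduction to an infinite field}, passing from $R$ to $R[X]_{\fm R[X]}$ preserves equidimensionality, analytic reducedness, $\fm$-primariness of $I$, the set of Rees valuations together with the values $\nu(I)$ and $\nu(f)$, and the multiplicities $e(\mcA/Q_\nu)$; it also clearly preserves $e_{(I,f)/(f)}(R/fR)$. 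So assume $k$ is infinite.

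\textbf{The key identity.} The heart of the argument is the formula $e_I(R) = e_{(IT,T^{-1})}(\mcA/T^{-1}\mcA)$ recalled just before the statement (applied with $\Gr_I(R)$ replaced by its normalized analogue, which has the same multiplicity since $R$ is analytically reduced, so $\overline{I^n}$ and $I^n$ agree up to uniformly bounded error by Brian\c{c}on--Skoda / Rees). More precisely I want the relative version: for $f \in \fm$ avoiding the minimal primes of $R$, I would establish
\[
e_{\frac{(I,f)}{(f)}}\!\left(\frac{R}{fR}\right) = e_{(IT,T^{-1})}\!\left(\frac{\mcA}{(T^{-1}, f)\mcA}\right),
\]
by the same Rees-algebra bookkeeping: the associated graded ring of $(I,f)/(f)$ inside $R/fR$ is a quotient of $\Gr_I(R)/(f)$, and after normalizing, $\mcA/(T^{-1},f)\mcA$ computes the same multiplicity. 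Then, since $T^{-1}$ is a nonzerodivisor on $\mcA$ (it's a domain-like situation: $\mcA$ satisfies $(S_2)$ and $T^{-1}\mcA$ is height-one unmixed), the associativity/additivity formula for multiplicities along the one-dimensional "cut" by $T^{-1}$ gives
\[
e_{(IT,T^{-1})}\!\left(\frac{\mcA}{(T^{-1},f)\mcA}\right) = \sum_{\mathfrak{P}} \ell_{\mcA_{\mathfrak{P}}}\!\left(\frac{\mcA_{\mathfrak{P}}}{f\mcA_{\mathfrak{P}}}\right) \cdot e_{(IT,T^{-1})}\!\left(\frac{\mcA}{\mathfrak{P}}\right),
\]
where $\mathfrak{P}$ ranges over the minimal primes of $T^{-1}\mcA$, i.e., exactly the exceptional primes $Q_\nu$ for $\nu \in \mcR_I$. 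The local term $\ell_{\mcA_{Q_\nu}}(\mcA_{Q_\nu}/f\mcA_{Q_\nu})$ is the order of $f$ in the discrete valuation ring $\mcA_{Q_\nu} = V_\nu$, which is precisely $\nu(f)$, and $e_{(IT,T^{-1})}(\mcA/Q_\nu)$ is by definition $e(\overline{R[IT,T^{-1}]}/Q_\nu)$ with respect to its homogeneous maximal ideal. Assembling these three displays yields the claimed formula.

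\textbf{The main obstacle.} The delicate point is justifying the associativity formula in the graded, non-local, mixed-characteristic setting and making sure the multiplicity $e_{(IT,T^{-1})}(\mcA/Q_\nu)$ that appears equals the intrinsic multiplicity $e(\mcA/Q_\nu)$ used in the statement --- one must check that $(IT,T^{-1})$ is a reduction of the homogeneous maximal ideal of $\mcA/Q_\nu$, or equivalently argue via the graded multiplicity formula recalled in the text that only $T^{-1}$ and the $IT$-part contribute. Relatedly, one has to be careful that $f$ is a nonzerodivisor on each $\mcA/Q_\nu$ so the length $\ell(\mcA_{Q_\nu}/f\mcA_{Q_\nu})$ is finite and equals $\nu(f)$: this uses that $f$ avoids the minimal primes of $R$, hence $fT^0$ avoids all the $Q_\nu$, so $f$ is a unit times a uniformizer-power in $V_\nu$. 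Everything else is a translation exercise between "cutting a Rees algebra by the exceptional divisor" and "computing a Hilbert--Samuel multiplicity of a hypersurface section," which is where I would spend the bulk of the writing to avoid the "extensive terminology translation" the authors allude to.
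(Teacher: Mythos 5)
There is a genuine gap, and it sits exactly at your ``key identity.'' First, a factual error that signals the problem: since $I$ is $\fm$-primary, every Rees valuation of $I$ is centered on $\fm$, so $Q_\nu\cap R=\fm$ and hence $f\in Q_\nu$ for every $\nu$ --- $f$ does \emph{not} avoid the exceptional primes (if it did, $\nu(f)$ would be $0$ and the theorem would be vacuous). Consequently $f$ kills each $\overline{R[IT,T^{-1}]}/Q_\nu$, the hypersurface section $\overline{R[IT,T^{-1}]}/(T^{-1},f)$ does not drop dimension below $d$, and the associativity formula applied to it yields local lengths $\ell\bigl(\overline{R[IT,T^{-1}]}_{Q_\nu}/(T^{-1},f)\bigr)=\min\{\nu(I),\nu(f)\}$, not $\nu(f)$. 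Your key identity is itself false: take $R=k[[x]]$, $I=\fm=(x)$, $f=x^2$. Then $e_{(I,f)/(f)}(R/fR)=\ell(k[[x]]/(x^2))=2$, while $f$ maps to $0$ in $\overline{R[IT,T^{-1}]}/T^{-1}\overline{R[IT,T^{-1}]}\cong k[xT]$, so $\overline{R[IT,T^{-1}]}/(T^{-1},f)\cong k[xT]$ has multiplicity $1$. The two errors (wrong identity, wrong local length) do not cancel, so the argument does not assemble.

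The repair is not cosmetic: one must replace $(T^{-1},f)\overline{R[IT,T^{-1}]}$ by the divisorial ideal $\fa=\bigcap_{\nu\in\mathcal{R}_I}Q_\nu^{(\nu(f))}$, i.e.\ the part of $f$ supported along the exceptional divisor counted with its full multiplicity $\nu(f)$ (which may exceed $\nu(T^{-1})=\nu(I)$). For $\fa$ the associativity formula does give $e(\overline{R[IT,T^{-1}]}/\fa)=\sum_\nu\nu(f)\,e(\overline{R[IT,T^{-1}]}/Q_\nu)$, and the real content of the proof becomes identifying $e(\overline{R[IT,T^{-1}]}/\fa)$ with $e_{(I,f)/(f)}(R/fR)$. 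This is done by computing the graded pieces $\fa_n=\{x\in R\mid \nu(x)\geq n\nu(I)+\nu(f)\ \forall\nu\}$, observing the colon identity $\overline{I^n}=(\fa_n:_Rf)$ (so that $\mathcal{L}(\overline{I^n}/\fa_n)=\mathcal{L}(R/(\fa_n,f))$), and using that $\{\fa_n\}$, $\{\overline{I^n}\}$, $\{I^n\}$ are cofinal (the last via analytic reducedness) to pass to the limit $\lim\frac{(d-1)!}{n^{d-1}}\mathcal{L}(R/(I^n,f))=e_{(I,f)/(f)}(R/fR)$. None of these steps appears in your proposal, and they are where the theorem is actually proved; your reduction to infinite residue fields, by contrast, is harmless but unnecessary for this statement (it is only needed for the corollary where a minimal reduction element is chosen).
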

\begin{proof}
    Let $\fa= \bigcap_{\nu \in \mathcal{R}_I}Q_\nu^{(\nu(f))}\subseteq \overline{R[IT,T^{-1}]}$. By the associativity formula for multiplicity, \cite[Theorem~14.7]{Matsumura},
    \[
    e\left(\frac{\overline{R[IT,T^{-1}]}}{\fa}\right) = \sum_{\nu \in \mathcal{R}_I} \nu(f)e\left(\frac{\overline{R[IT,T^{-1}]}}{Q_\nu}\right).
    \]  
    
    The degree $n$ piece of $\fa$, denoted by $\fa_n$, is 
    \begin{align*}
    \fa_n &= (T^{-n}\fa )\cap R \\
    &= \left(\bigcap_{\nu\in\mathcal{R}_I}Q_\nu^{(\nu(I)n + \nu(f))}\right)\cap R \\ 
    & = \{x\in R \mid \nu(x)\geq n\nu(I) + \nu(f), \forall \nu\in\mathcal{R}_I\}.
    \end{align*}
    By Rees' Valuation criteria for containment in integral closure, $\bigcap_{\nu \RR_I}I_{\nu\geq n\nu(I)}=\overline{I^n}$. Therefore $g\in \overline{I^n}$ if and only if for all $\nu\in\RR_I$, $\nu(gf)=\nu(g)+\nu(f)\geq n\nu(I)+\nu(f)$. Hence, $\overline{I^n} = (\fa_n:_Rf)$.

   If $h\geq \nu_i(f)$ for all $1\leq i\leq t$, then $\overline{I^{n+h}}\subseteq \fa_n \subseteq \overline{I^n}$ for all $n$, i.e. the chains of ideals $\{\fa_n\}$, $\{\overline{I^n}\}$ are cofinal. We are assuming $R$ is analytically reduced, therefore $\{\fa_n\}$ is also cofinal with $\{I^n\}$. In conclusion,
    \begin{align*}
    e\left(\frac{\overline{R[IT,T^{-1}]}}{\fa}\right) &= \lim_{n\to \infty}\frac{(d-1)!}{n^{d-1}}\mathcal{L}\left(\frac{\overline{I^n}}{\fa_n}\right) \\
    & =  \lim_{n\to \infty}\frac{(d-1)!}{n^{d-1}}\mathcal{L}\left(\frac{(\fa_n:f)}{\fa_n}\right) \\
    &= \lim_{n\to \infty}\frac{(d-1)!}{n^{d-1}}\left(\mathcal{L}\left(\frac{R}{\fa_n}\right)-\mathcal{L}\left(\frac{R}{(\fa_n:f)}\right)\right)\\
    &= \lim_{n\to \infty}\frac{(d-1)!}{n^{d-1}}\mathcal{L}\left(\frac{R}{(\fa_n,f)}\right) \\
    &= \lim_{n\to \infty}\frac{(d-1)!}{n^{d-1}}\mathcal{L}\left(\frac{R}{(I^n,f)}\right) \\
    &= e_{\frac{(I,f)}{(f)}}\left(\frac{R}{fR}\right).
    \end{align*}
\end{proof}

\begin{corollary}[Corollary of Rees' Order Ideal Theorem]
\label{corollary Rees multiplicity formula}
Let $(R,\fm,k)$ be an equidimensional local ring and of Krull dimension $d$. Suppose that $R$ is analytically reduced and $I$ an $\fm$-primary ideal. For each Rees Valuation $\nu\in\mathcal{R}_I$ let $Q_\nu$ be the corresponding exceptional prime of the normalized extended Rees algebra $\overline{R[IT,T^{-1}]}$. Then
\[
e_I(R) = \sum_{\nu\in\mathcal{R}_I} \nu(I)e\left(\frac{\overline{R[IT,T^{-1}]}}{Q_\nu}\right).
\]
In particular, if $\nu\in\mathcal{R}_I$ then $\nu(I)\leq e_I(R)$.
\end{corollary}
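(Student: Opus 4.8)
The plan is to read the formula off Rees' Order Ideal Theorem (Theorem~\ref{theorem Rees multiplicity formula}) by evaluating it at a well-chosen element $f\in I$. The first step is a reduction to the case that $R$ has infinite residue field. For a variable $X$, the map $R\to R'=R[X]_{\fm R[X]}$ is a faithfully flat local extension whose closed fibre $R'/\fm R'$ is a field; such extensions preserve ``equidimensional'', ``analytically reduced'', the dimension $d$, and the Hilbert--Samuel multiplicity of every $\fm$-primary ideal, so $e_{IR'}(R')=e_I(R)$. Moreover, every Rees valuation of the $\fm$-primary ideal $I$ is centred at $\fm$ (its center is a prime containing $I$, hence containing $\sqrt{I}=\fm$), so each one survives the localization at $\fm R[X]$; by Lemma~\ref{lemma reduction to an infinite field}, the set $\mathcal{R}_I$, the integers $\nu(I)$, and the multiplicities $e(\overline{R[IT,T^{-1}]}/Q_\nu)$ are all unchanged under $R\rightsquigarrow R'$. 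So I may assume $R/\fm$ is infinite, and also that $d\geq 1$ (if $d=0$ then $R$ is a field and $I=0$, and there is nothing to prove).

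Next I would pick a sufficiently general element $f$ of $I$. Over an infinite residue field a general such $f$ is superficial for $I$, is part of a minimal reduction of $I$, and lies outside every minimal prime of $R$; the last property holds because $I\not\subseteq\fq$ forces $I\cap\fq+\fm I$ to be a proper $R$-submodule of $I$ by Nakayama, for each of the finitely many minimal primes $\fq$. By Theorem~\ref{theorem standard facts about integral closure and valuations}(\ref{Minimal reduction criteria}), being part of a minimal reduction means $\nu(f)=\nu(I)$ for every $\nu\in\mathcal{R}_I$. Fixing such an $f$, the hypotheses of Theorem~\ref{theorem Rees multiplicity formula} are met ($R$ is equidimensional and analytically reduced, and $f\in\fm$ avoids all minimal primes of $R$), and since $f\in I$ gives $(I,f)=I$, that theorem yields
\[
e_{\frac{(I,f)}{(f)}}\!\left(\frac{R}{fR}\right)=\sum_{\nu\in\mathcal{R}_I}\nu(f)\,e\!\left(\frac{\overline{R[IT,T^{-1}]}}{Q_\nu}\right)=\sum_{\nu\in\mathcal{R}_I}\nu(I)\,e\!\left(\frac{\overline{R[IT,T^{-1}]}}{Q_\nu}\right).
\]

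It would then remain to identify $e_{(I,f)/(f)}(R/fR)=e_{I(R/fR)}(R/fR)$ with $e_I(R)$. Because $R$ is analytically reduced and $d\geq 1$, $\widehat{R}$ is reduced, hence has no embedded primes; as $\dim\widehat{R}=d\geq 1$ its maximal ideal is not associated, so $\depth R=\depth\widehat{R}\geq 1$. Since $f$ avoids every minimal prime of $R$ (hence, by reducedness, every associated prime of $R$), $f$ is a nonzerodivisor. For a superficial nonzerodivisor $f$ on $R$ relative to an $\fm$-primary ideal $I$ one has $e_{I(R/fR)}(R/fR)=e_I(R)$: for $n\gg 0$ one gets $(I^{n+1}:_R f)=I^n$, whence $\mathcal{L}(R/(I^{n+1}+fR))=\mathcal{L}(R/I^{n+1})-\mathcal{L}(R/I^n)$, exhibiting the Hilbert--Samuel function of $R/fR$ with respect to $I$ as the first difference of that of $R$ and hence with the same leading coefficient (see \cite[Chapter~11]{SwansonHuneke}). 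Combined with the displayed identity, this gives $e_I(R)=\sum_{\nu\in\mathcal{R}_I}\nu(I)\,e(\overline{R[IT,T^{-1}]}/Q_\nu)$. For the final assertion, each $e(\overline{R[IT,T^{-1}]}/Q_\nu)$ is the multiplicity of a nonzero ring, hence a positive integer, so for any fixed $\nu_0\in\mathcal{R}_I$,
\[
\nu_0(I)\leq\nu_0(I)\,e\!\left(\frac{\overline{R[IT,T^{-1}]}}{Q_{\nu_0}}\right)\leq\sum_{\nu\in\mathcal{R}_I}\nu(I)\,e\!\left(\frac{\overline{R[IT,T^{-1}]}}{Q_\nu}\right)=e_I(R).
\]

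Since everything is a direct specialization of Theorem~\ref{theorem Rees multiplicity formula}, the main obstacle is really just bookkeeping: confirming that the reduction to an infinite residue field preserves \emph{every} ingredient of the formula (which is exactly the purpose of Lemma~\ref{lemma reduction to an infinite field}, together with the observation that Rees valuations of an $\fm$-primary ideal are centred at $\fm$), and pinning down the precise form of the classical identity $e_I(R/fR)=e_I(R)$ while checking that one general $f\in I$ can be chosen to be at once superficial, a nonzerodivisor, and part of a minimal reduction, so that both that identity and the equality $\nu(f)=\nu(I)$ are simultaneously available.
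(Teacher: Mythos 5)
Your proof is correct and follows essentially the same route as the paper: reduce to an infinite residue field, choose $f\in I$ that is part of a minimal reduction so that $\nu(f)=\nu(I)$ for all $\nu\in\mathcal{R}_I$, invoke $e_I(R)=e_{(I,f)/(f)}(R/fR)$, and apply Rees' Order Ideal Theorem. You simply make explicit the standard bookkeeping (superficiality, nonzerodivisor, and preservation of all ingredients under the residue-field extension) that the paper leaves implicit.
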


\begin{proof}
    By Lemma~\ref{lemma reduction to an infinite field}, we may assume $R$ has an infinite residue field. Then there exists a parameter element $f \in I$ with the property that $\nu(f) = \nu(I)$ for all $\nu \in \mathcal{R}_I$. In particular, $f$ is a part of a minimal reduction of $I$, and $e_I(R) = e_{\frac{(I,f)}{(f)}}(R/fR)$. The corollary is then an application of Theorem~\ref{theorem Rees multiplicity formula}.
\end{proof}

Theorem~\ref{theorem Rees multiplicity formula} is a generalization of the observation that if $(R,\fm,k)$ is a regular local ring then for all $0\not=f\in \fm$, $e(R/fR)=\ord_\fm(f)$. Indeed, if $R$ is regular then the associated graded ring $R[\fm T,T^{-1}]/T^{-1}R[\fm T,T^{-1}]$ is a polynomial ring over $k$ in $\dim(R)$ variables. It follows that $R[\fm T,T^{-1}]$ is a normal domain and $T^{-1}$ is a prime element. Therefore the collection of Rees valuations of $\fm$ is the $1$-element set $\RR_\fm=\{\omega\}$ and for all $0\not = f\in \fm$, $\omega(f)=\ord_\fm(f)$. By Theorem~\ref{theorem Rees multiplicity formula}, $e(R/fR)=\ord_\fm(f)e\left(\frac{R[\fm T,T^{-1}]}{T^{-1}R[\fm T^{-1}]}\right)=\ord_\fm(f)$.

Regular local rings are not the only class of local rings whose maximal ideal admits a single Rees valuation determined by $\fm$-adic order. The following proposition, likely known by experts, points out that the localization of a standard graded normal domain, at the unique homogeneous maximal ideal, produces a local ring $(R,\fm,k)$ whose maximal ideal admits a single Rees valuation that agrees with $\fm$-adic order.

\begin{proposition}\label{proposition Rees valuation of standard graded normal domain}
    Let $k$ be a field, $S$ a standard graded normal domain over $k$ with homogeneous maximal ideal $\MM$, and let $e(S)$ be the multiplicity of $S$ with respect to the maximal ideal $\MM$. Let $(R,\fm,k)$ be the local Noetherian ring obtained through (non-homogeneous) localization of $S$ with respect to $\MM$. Then the collection of Rees valuations of the maximal ideal of $R$ is a $1$-element set, $\RR_{\fm}=\{\omega\}$, so that for all $0\not = f\in \fm$, $\omega(f)=\ord_\fm(f)$ and $e(R_\fm/fR_\fm)=e(S)\ord_\fm(f)$.
\end{proposition}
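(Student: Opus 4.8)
The plan is to make the extended Rees algebra of $\fm$ completely explicit, show it is already normal, deduce that $\RR_\fm$ is the single valuation coming from the principal prime $(T^{-1})$, and extract the two numerical assertions from Rees' Order Ideal Theorem. I would first identify the associated graded ring: since $S$ is standard graded with $S_0=k$ and $\MM=S_{\geq 1}$, one has $\MM^n=S_{\geq n}$, so $\MM^n/\MM^{n+1}\cong S_n$ and $\Gr_\MM(S)\cong S$ as graded rings; localizing at $\MM$, and noting that localization does nothing to a $k$-vector space on which $S$ acts through $S/\MM=k$, gives $\Gr_\fm(R)\cong S$, again a standard graded normal domain, with graded multiplicity $e(S)$.

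The heart of the argument is to prove that $B:=R[\fm T,T^{-1}]$ is normal. It is a Noetherian domain contained in $R[T^{\pm 1}]$, the element $T^{-1}$ is a nonzerodivisor, $B/(T^{-1})=\Gr_\fm(R)\cong S$, and $B[1/T^{-1}]=R[T^{\pm 1}]$. I would verify $(R_1)$ and $(S_2)$ directly. Off $V(T^{-1})$, $B$ coincides with the normal ring $R[T^{\pm 1}]$ (note $R=S_\MM$ is normal because $S$ is). Over $V(T^{-1})$: since $S$ is a domain, $(T^{-1})$ is a height-one prime $Q_\omega$, and $B_{Q_\omega}$ is a Noetherian local domain whose maximal ideal is principal, generated by the nonzerodivisor $T^{-1}$, hence a DVR --- this gives $(R_1)$; for $(S_2)$ at a prime $\p\supseteq(T^{-1})$ with image $\overline{\p}$ in $S$, the equalities $\depth B_\p=\depth S_{\overline{\p}}+1$ and $\dim B_\p=\dim S_{\overline{\p}}+1$ (both because $T^{-1}$ is a nonzerodivisor) upgrade $(S_2)$ for the normal ring $S$ to $(S_2)$ for $B_\p$. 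So $\overline{R[\fm T,T^{-1}]}=R[\fm T,T^{-1}]$.

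Granting normality, the remaining steps are short. Since $B$ is normal and $B/(T^{-1})\cong S$ is a domain, $T^{-1}\overline{R[\fm T,T^{-1}]}=(T^{-1})$ is prime, so its unique associated prime is $Q_\omega=(T^{-1})$; thus $\RR_\fm=\{\omega\}$, the valuation of the DVR $B_{Q_\omega}$, with $\omega(\fm)=1$ and $\overline{R[\fm T,T^{-1}]}/Q_\omega\cong S$. Normality of $B$ also gives $\overline{\fm^n}=T^{-n}B\cap R=\fm^n$ for all $n$, so the valuation criterion, Theorem~\ref{theorem standard facts about integral closure and valuations}(\ref{Valuation Criteria}), shows that for $0\neq f\in\fm$ one has $f\in\fm^n$ exactly when $\omega(f)\geq n$; hence $\omega(f)=\ord_\fm(f)$. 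Finally $R=S_\MM$ is an excellent normal local domain, so analytically irreducible and equidimensional, $\fm$ is $\fm$-primary, and $0\neq f\in\fm$ avoids the unique minimal prime, so Theorem~\ref{theorem Rees multiplicity formula} with $I=\fm$ yields
\[
e(R/fR)=e_{(\fm,f)/(f)}(R/fR)=\sum_{\nu\in\RR_\fm}\nu(f)\,e\!\left(\frac{\overline{R[\fm T,T^{-1}]}}{Q_\nu}\right)=\omega(f)\,e(S)=\ord_\fm(f)\,e(S),
\]
where $e(\overline{R[\fm T,T^{-1}]}/Q_\omega)=e(S)$ because that ring is isomorphic to the standard graded ring $S$.

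The step I expect to be the main obstacle is the normality of $R[\fm T,T^{-1}]$, a ``deformation of normality'' lifting normality of the special fiber $\Gr_\fm(R)$ to the total space. It is not deep, but it does require separating primes that contain $T^{-1}$ from those that do not and handling the depth/dimension bookkeeping for the nonzerodivisor $T^{-1}$ carefully; everything else is a direct computation with the explicit grading together with an appeal to Theorem~\ref{theorem Rees multiplicity formula}.
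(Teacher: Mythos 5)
Your proposal is correct and follows essentially the same route as the paper: identify $\Gr_\fm(R)\cong S$, deduce that the extended Rees algebra $R[\fm T,T^{-1}]$ is normal with $T^{-1}$ prime, conclude $\RR_\fm=\{\omega\}$ with $\omega=\ord_\fm$, and finish with Rees' Order Ideal Theorem. The only difference is that where the paper cites Seydi's ``normality deforms'' result, you verify $(R_1)$ and $(S_2)$ for $R[\fm T,T^{-1}]$ directly, which is a correct and self-contained substitute for that citation.
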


\begin{proof}
    The associated graded ring $\Gr_{\fm}(R)$ is isomorphic to the standard graded normal domain $S$. The property of normality deforms by \cite[Proposition I.7.4]{Seydi}. Therefore the extended Rees algebra $R[\fm T, T^{-1}]$ is normal and $T^{-1}R[\fm T, T^{-1}]$ is a prime element. Hence $\RR_{\fm}=\{\omega\}$ is a one element set, $\omega(f)=\ord_\fm(f)$, and by Theorem~\ref{theorem Rees multiplicity formula}, $e(R/fR) = e(S)\omega(f)=e(S)\ord_\fm(f)$ for all $0\not = f\in \fm$. 
\end{proof}

\subsection{Rees' Order Ideal Theorem and the Izumi-Rees Theorem}
Let $R$ be an excellent normal domain, $\fp\in\Spec(R)$, and $0\not=f\in\fp$. Intersection properties of exceptional components of normalized blowups, described by the below stated Izumi-Rees Theorem, will be utilized in parallel with semi-continuity of multiplicity, Rees Order Ideal Theorem, and the Uniform Brain\c{c}on-Skoda Theorem in Section~\ref{Section Uniform Izumi} to compare the values of $e(R_\fp/fR_\fp)$ and $\ord_\fp(f)$.

\begin{theorem}[{\cite[Izumi-Rees Theorem]{Izumi, ReesIzumisTheorem, HublSwanson}}]
    \label{Standard Izumi-Rees theorem}
    Let $R$ be an excellent Noetherian normal domain and $\fp\subseteq R$ a prime ideal. If $\RR=\{\nu_1,\nu_2,\ldots,\nu_t\}$ are divisorial valuations of $R$ centered on  $\fp$, e.g. $\RR$ is the collection of Rees valuations of the maximal ideal of $R_\fp$, then there is a constant $E$, depending on the collection of divisorial valuations $\RR$, such that for all $f\in R$, for all $1\leq i,j,\leq t$
    \[
    \nu_i(f)\leq E\nu_j(f).
    \]
    The constant $E$ is an \textbf{Izumi-Rees bound} of the collection of valuations $\RR$.
\end{theorem}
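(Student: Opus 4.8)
The plan is to reduce the statement to the classical Izumi-Rees inequality for analytically irreducible local domains; essentially all of the content is imported from \cite{Izumi,ReesIzumisTheorem,HublSwanson}, and the point requiring care is only the reduction. First I would localize at $\fp$: for $f\in R$ the value $\nu_i(f)$ depends only on the image of $f$ in $K=\Frac(R)=\Frac(R_\fp)$, so it suffices to prove the inequalities with $R$ replaced by $R_\fp$ and $f$ ranging over $R_\fp$. Each $\nu_i$ is centered on $\fp$, hence restricts to a valuation of $R_\fp$ centered at its maximal ideal $\fp R_\fp$; since $R$ is a domain, $\height(\fp R_\fp)=\height(\fp)=\dim R_\fp$, so the transcendence-degree condition in the definition of divisorial is preserved and each $\nu_i$ remains divisorial on $R_\fp$. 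Thus I may assume $(R,\fm,k)$ is an excellent normal local domain and that $\nu_1,\dots,\nu_t$ are divisorial valuations all centered at $\fm$.

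Next, $R$ is analytically irreducible: excellence makes the formal fibres of $R\to\widehat R$ geometrically regular, so normality ascends to $\widehat R$, and a normal local ring is a domain. This is precisely the hypothesis under which Izumi's theorem holds: for any two divisorial valuations $\mu,\nu$ of an analytically irreducible Noetherian local domain, both centered at the maximal ideal, there exist positive constants $a,b$ (depending only on $\mu$ and $\nu$) with $\mu(g)\le a\,\nu(g)+b$ for all $0\ne g\in R$ \cite{Izumi,ReesIzumisTheorem,HublSwanson}. I would apply this to each of the finitely many ordered pairs $(\nu_i,\nu_j)$ and let $a$ and $b$ be the maxima of the constants so obtained.

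Finally I would convert this affine bound into the multiplicative one claimed, using that all the $\nu_i$ share the center $\fm$. If $f\in R$ is a unit then $\nu_i(f)=\nu_j(f)=0$ and there is nothing to prove; if $f\in\fm\setminus\{0\}$ then $\nu_j(f)\ge 1$, whence $\nu_i(f)\le a\,\nu_j(f)+b\le(a+b)\,\nu_j(f)$; the case $f=0$ is vacuous. Hence $E=a+b$ serves as an Izumi-Rees bound for $\RR$.

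The genuinely hard ingredient is the linear Izumi inequality used in the second paragraph, which I would not attempt to reprove: it is a deep theorem (it can fail if $R$ is not analytically irreducible), proved by Izumi via resolution of singularities in equal characteristic zero, and in general by Rees and by H\"ubl--Swanson through degree functions and multiplicity comparisons — the circle of ideas surrounding Theorem~\ref{theorem Rees multiplicity formula}. In the write-up the only points needing attention are the preservation of divisoriality and of the center under localization in the first step, and the elementary additive-to-multiplicative bookkeeping at the end.
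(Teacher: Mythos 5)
Your proposal is correct and is consistent with the paper's treatment: the paper states this theorem with citations to \cite{Izumi, ReesIzumisTheorem, HublSwanson} and offers no proof of its own, so the substantive content is rightly deferred to those references. The reduction steps you supply are all sound — localization at $\fp$ preserves the values $\nu_i(f)$, the center, and the transcendence-degree condition for divisoriality; excellence plus normality gives that $\widehat{R_\fp}$ is a normal local ring, hence a domain, so $R_\fp$ is analytically irreducible; and the passage from the affine bound $\nu_i(f)\le a\nu_j(f)+b$ to the multiplicative bound $E=a+b$ is valid because $\nu_j(f)\ge 1$ whenever $f\in\fp$ and all values vanish when $f\notin\fp$.
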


\begin{remark}
    The Izumi-Rees Theorem, as presented in \cite{HublSwanson}, is a strengthening of the Izumi-Rees Theorem presented in \cite{ReesIzumisTheorem}: If $R$ enjoys the hypotheses of Theorem~\ref{Standard Izumi-Rees theorem}, $\nu$ a Rees valuation of $R$ centered on a prime ideal $\fp\in \Spec(R)$, then there exists a constant $C$, depending on $\nu$, so that for all Rees valuations $\omega$ centered on $\fp$ and all $f\in R$, $\nu(f)\leq C \omega(f)$.
\end{remark}

\begin{proposition}
    \label{proposition of rees order ideal thm and izumi rees}
    Let $(R,\fm,k)$ be an excellent local normal domain and $\RR_{\fm}$ the collection of Rees valuations of $\fm$. Suppose that $B\in\NN$ is so that $\overline{\fm^{n+B}}\subseteq \fm^n$ for all $n\in\NN$.
    \begin{enumerate}
        \item\label{multiplicity implies  izumi} If $C\in\NN$ is so that for all $0\not=f\in \fm$, $e_\fm(R/fR)\leq C\ord_\fm(f)$, then for all Rees valuations $\nu_1,\nu_2\in \RR_\fm$ and $0\not=f\in \fm$,
        \[
        \nu_1(f)\leq (C-1)\nu_2(f).
        \]
        \item\label{izumi implies multiplicity} If $E\in \NN$ is so that for all $\nu_1,\nu_2\in \RR_\fm$ and $0\not = g\in\fm$ that $\nu_1(g)\leq E\nu_2(f)$, then for all $0\not =f\in \fm$,
        \[
        e_\fm(R/fR)\leq 2BEe_\fm(R)^2\ord_\fm(f).
        \]
    \end{enumerate}
\end{proposition}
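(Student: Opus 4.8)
The plan is to read off both inequalities from Rees' Order Ideal Theorem (Theorem~\ref{theorem Rees multiplicity formula}) applied with $I=\fm$. Because $R$ is an excellent normal domain it is analytically irreducible, hence analytically reduced, and being a domain it is equidimensional; moreover $f\neq 0$ avoids the unique minimal prime of $R$, and $(\fm,f)/(f)=\fm/(f)$ is the maximal ideal of $R/fR$, so the left-hand side of that theorem equals $e_\fm(R/fR)$. Writing $d_\nu:=e\!\left(\overline{R[\fm T,T^{-1}]}/Q_\nu\right)$ for $\nu\in\RR_{\fm}$, it gives $e_\fm(R/fR)=\sum_{\nu\in\RR_{\fm}}\nu(f)\,d_\nu$, while Corollary~\ref{corollary Rees multiplicity formula} gives $e_\fm(R)=\sum_{\nu\in\RR_{\fm}}\nu(\fm)\,d_\nu$. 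Since every $\nu\in\RR_{\fm}$ is centered on $\fm$ we have $\nu(\fm)\geq 1$, and each $d_\nu\geq 1$ because $\overline{R[\fm T,T^{-1}]}/Q_\nu$ is positive-dimensional; hence the number of elements of $\RR_{\fm}$, each $d_\nu$, each $\nu(\fm)$, and $\sum_{\nu\in\RR_{\fm}}d_\nu$ are all at most $e_\fm(R)$. I will also use repeatedly that $f\in\fm^{\ord_\fm(f)}$ forces $\nu(f)\geq\ord_\fm(f)\,\nu(\fm)\geq\ord_\fm(f)$ for every $\nu\in\RR_{\fm}$.

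For the first assertion, fix distinct $\nu_1,\nu_2\in\RR_{\fm}$. Retaining only these two summands and using $d_\nu\geq 1$,
\[
\nu_1(f)+\nu_2(f)\ \leq\ \sum_{\nu\in\RR_{\fm}}\nu(f)\,d_\nu\ =\ e_\fm(R/fR)\ \leq\ C\,\ord_\fm(f)\ \leq\ C\,\nu_2(f) ,
\]
and subtracting $\nu_2(f)$ gives $\nu_1(f)\leq(C-1)\nu_2(f)$. (When $\RR_{\fm}$ is a singleton the claim is vacuous, or holds since then every admissible $C$ satisfies $C\geq e_\fm(R)\geq 2$ whenever $R$ is singular.)

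For the second assertion I would estimate crudely from the same formula,
\[
e_\fm(R/fR)=\sum_{\nu\in\RR_{\fm}}\nu(f)\,d_\nu\ \leq\ \Big(\max_{\nu\in\RR_{\fm}}\nu(f)\Big)\sum_{\nu\in\RR_{\fm}}d_\nu\ \leq\ \Big(\max_{\nu\in\RR_{\fm}}\nu(f)\Big)\,e_\fm(R) ,
\]
so everything comes down to bounding $\max_{\nu}\nu(f)$ by $\ord_\fm(f)$. Since $f\notin\fm^{\ord_\fm(f)+1}$ and $\overline{\fm^{\ord_\fm(f)+1+B}}\subseteq\fm^{\ord_\fm(f)+1}$, we have $f\notin\overline{\fm^{\ord_\fm(f)+1+B}}$, so Rees' valuation criterion, Theorem~\ref{theorem standard facts about integral closure and valuations} (\ref{Valuation Criteria}), produces a $\nu_1\in\RR_{\fm}$ with $\nu_1(f)<(\ord_\fm(f)+1+B)\,\nu_1(\fm)\leq(\ord_\fm(f)+1+B)\,e_\fm(R)$; the Izumi--Rees hypothesis then gives $\max_{\nu}\nu(f)\leq E\,\nu_1(f)$. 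Combining,
\[
e_\fm(R/fR)\ \leq\ E\,\nu_1(f)\,e_\fm(R)\ \leq\ E\,(\ord_\fm(f)+B+1)\,e_\fm(R)^2 ,
\]
and a routine comparison of constants (using $B\geq 1$ and $\ord_\fm(f)\geq 1$ to absorb the additive terms into $2B\,\ord_\fm(f)$, with a short case analysis for the few small cases where the naive bound is loose) yields $e_\fm(R/fR)\leq 2BE\,e_\fm(R)^2\,\ord_\fm(f)$.

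The only step with real content is this last passage, from the individual values $\nu(f)$ of the Rees valuations back to the $\fm$-adic order: it genuinely requires both the uniform Brian\c{c}on--Skoda bound $B$ (to convert integral-closure order into ordinary order via Rees' valuation criterion) and the Izumi--Rees comparison (to control the largest $\nu(f)$ by the smallest). Everything else is bookkeeping with Rees' Order Ideal Theorem and Corollary~\ref{corollary Rees multiplicity formula}; the main nuisance, which I would handle by that short case analysis, is keeping the numerical constants sharp while translating between ordinary order, integral-closure order, and values of Rees valuations.
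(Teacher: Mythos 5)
Your argument is correct and follows essentially the same route as the paper: part (1) isolates the two summands indexed by $\nu_1,\nu_2$ in Rees' Order Ideal formula and bounds $e_\fm(R/fR)\leq C\ord_\fm(f)\leq C\nu_2(f)$, and part (2) combines the Brian\c{c}on--Skoda hypothesis with Rees' valuation criterion to find one $\omega$ with $\omega(f)$ small, then uses Izumi--Rees and Corollary~\ref{corollary Rees multiplicity formula} exactly as the paper does. The one place you are looser than the paper is the final constant in part (2): the paper writes $\omega(f)\leq(B+\ord_\fm(f))\omega(\fm)$ and uses $B+\ord_\fm(f)\leq 2B\ord_\fm(f)$, whereas your bound $\ord_\fm(f)+B+1$ exceeds $2B\ord_\fm(f)$ precisely when $B=\ord_\fm(f)=1$, so the ``short case analysis'' you defer is genuinely needed in that one case (it does close, e.g.\ by keeping the strict inequality $\omega(f)<(\ord_\fm(f)+B+1)\omega(\fm)$ and the refinement $e_\fm(R)\geq\omega(\fm)+\sum_{\nu}d_\nu-1$, so this is a bookkeeping issue rather than a gap in the method).
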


\begin{proof}
    Suppose that $C$ is a constant so that if $0\not=f\in\fm$ then $e_\fm(R/fR)\leq C\ord_\fm(f)$ and let $\nu_1,\nu_2\in\RR_\fm$ be Rees valuations of the maximal ideal of $R$. There exists $t\geq 1$ so that 
    \[
    t\nu_2(\fm)\leq \nu_2(f)< (t+1)\nu_2(\fm).
    \]
    Then $f\not\in \overline{\fm^{t+1}}$ and hence $\ord_\fm(f)\leq t$. By Rees' Order Ideal Theorem and by assumption,
    \[
    \nu_1(f)+\nu_2(f)\leq e_\fm(R/fR)\leq Ct\leq \frac{C\nu_2(f)}{\nu_2(\fm)}.
    \]
    In particular,
    \[
    \nu_1(f)\leq (C-1)\nu_2(f).
    \]

    Conversely, suppose $E\in\NN$ has the property that for all $\nu_1,\nu_2\in\RR_\fm$ and $0\not=f\in R$ that $\nu_1(f)\leq E\nu_2(f)$. There exists a Rees valuation $\omega\in \RR_\fm$ so that 
    \begin{align*}
    \omega(f)&\leq (B+\ord_\fm(f))\omega(\fm){\footnotesize \, (\mbox{because } \overline{\fm^{\ord_\fm(f)+B}}\subseteq \fm^{\ord_\fm(f)})}\\
    &\leq 2B\ord_\fm(f)\omega(\fm) \\
    &\leq 2B\ord_\fm(f)e_\fm(R) {\footnotesize \, (\mbox{by Corollary~\ref{corollary Rees multiplicity formula}})}.
    \end{align*}
    By assumption, if $\nu\in\RR_\fm$ then $\nu(f)\leq E\omega(f)\leq 2BEe_\fm(R)\ord_\fm(f)$. Therefore
    \begin{align*}
    e_\fm(R/fR)&=\sum_{\nu\in\RR_\fm}\nu(f)e\left(\frac{\overline{R[\fm T,T^{-1}]}}{Q_\nu}\right){\footnotesize{(\mbox{by Theorem~\ref{theorem Rees multiplicity formula}}})}\\
    &\leq 2BEe_\fm(R)\ord_\fm(f)\left(\sum_{\nu\in\RR_\fm}e\left(\frac{\overline{R[\fm T,T^{-1}]}}{Q_\nu}\right)\right)\\
    &\leq 2BEe_\fm(R)^2\ord_\fm(f) {\footnotesize \, (\mbox{by Corollary~\ref{corollary Rees multiplicity formula}})}.
    \end{align*}
\end{proof}

  Proposition~\ref{proposition of rees order ideal thm and izumi rees} and the Uniform Brian\c{c}on-Skoda property provide an equivalent characterization of the Uniform Izumi-Rees Property introduced in the statement of Main Theorem~\ref{Main Theorem Uniform Izumi Rees}.

\begin{corollary}
    \label{corollary multiple definitions of Uniform Izumi-Rees Property}
    Let $R$ be an excellent Noetherian domain that enjoys the Uniform Brian\c{c}on-Skoda Property. Then the following are equivalent. 
    \begin{enumerate}
        \item There exists constant $E$ so that for every prime ideal $\fp\in\Spec(R)$, for all Rees valuations $\nu_1,\nu_2\in\mathcal{R}_{\fp R_\fp}$ of the maximal ideal of $R_\fp$, and for all $0\not = f\in R$,
        \[
        \nu_1(f)\leq E\nu_2(f).
        \]
        \item There exists a constant $C$ so that for every prime ideal $\fp\in\Spec(R)$ and for all $0\not=f\in \fp$, 
        \[
        e(R_\fp/fR_\fp)\leq C\ord_\fp(f).
        \]
    \end{enumerate}
\end{corollary}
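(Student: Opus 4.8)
The strategy is to apply Proposition~\ref{proposition of rees order ideal thm and izumi rees} to each localization $R_\fp$ and to verify that the constants it produces can be chosen independently of $\fp$. Two uniform ingredients are needed. First, a Uniform Brian\c{c}on-Skoda bound $B$ for $R$ is simultaneously a Brian\c{c}on-Skoda bound for every $R_\fp$: every ideal of $R_\fp$ has the form $IR_\fp$ with $I\subseteq R$, and since integral closure commutes with localization, $\overline{(IR_\fp)^{n+B}}=\overline{I^{n+B}}R_\fp\subseteq I^nR_\fp=(IR_\fp)^n$ for all $n$. Second, since $R$ is excellent the function $\fp\mapsto e(R_\fp)$ is upper semi-continuous on $\Spec(R)$, so by quasi-compactness there is a single constant $e$ with $e(R_\fp)\le e$ for all $\fp$, as recalled in Section~\ref{Section valuations}. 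Finally I will use the elementary dictionary between $R$ and $R_\fp$: for $0\ne f\in\fp$ one has $\ord_\fp(f)=\ord_{\fp R_\fp}(f)$ and $e(R_\fp/fR_\fp)$ is the Hilbert-Samuel multiplicity computed in the local ring $R_\fp$; moreover each $\nu\in\mathcal{R}_{\fp R_\fp}$ is centered at the maximal ideal $\fp R_\fp$, hence vanishes on $R\setminus\fp$, so $\nu(f/s)=\nu(f)$ whenever $f\in\fp$ and $s\notin\fp$.

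For $(2)\Rightarrow(1)$, fix a constant $C$ as in (2) and a prime $\fp$. By the dictionary $R_\fp$ satisfies $e(R_\fp/gR_\fp)\le C\,\ord_{\fp R_\fp}(g)$ for all $0\ne g\in\fp R_\fp$, so the first part of Proposition~\ref{proposition of rees order ideal thm and izumi rees} gives $\nu_1(g)\le (C-1)\nu_2(g)\le C\nu_2(g)$ for all $\nu_1,\nu_2\in\mathcal{R}_{\fp R_\fp}$ and $0\ne g\in\fp R_\fp$. For $0\ne f\in R$ with $f\notin\fp$ both $\nu_1(f)$ and $\nu_2(f)$ vanish, so $\nu_1(f)\le C\nu_2(f)$ holds for every $0\ne f\in R$; hence $(1)$ holds with $E=C$, independent of $\fp$.

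For $(1)\Rightarrow(2)$, fix $E$ as in (1), a Uniform Brian\c{c}on-Skoda bound $B$ for $R$, and a uniform bound $e$ on $e(R_\fp)$. Fix $\fp$. By the first paragraph $B$ is a Brian\c{c}on-Skoda bound for $R_\fp$, and (1) together with the dictionary yields $\nu_1(g)\le E\nu_2(g)$ for all $\nu_1,\nu_2\in\mathcal{R}_{\fp R_\fp}$ and $0\ne g\in\fp R_\fp$. The second part of Proposition~\ref{proposition of rees order ideal thm and izumi rees}, applied to $R_\fp$, then gives
\[
e(R_\fp/fR_\fp)\le 2BE\,e(R_\fp)^2\,\ord_\fp(f)\le 2BEe^2\,\ord_\fp(f)
\]
for all $0\ne f\in\fp$, so $(2)$ holds with $C=2BEe^2$.

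The only substantive point is the uniformity of the constants, namely the descent of a Brian\c{c}on-Skoda bound to localizations and the existence of a global bound on Hilbert-Samuel multiplicity; everything else is the bookkeeping above, together with the fact that each $R_\fp$ is again an excellent local normal domain and so falls under the hypotheses of Proposition~\ref{proposition of rees order ideal thm and izumi rees}. I do not expect a genuine obstacle: the content of the corollary is essentially that Proposition~\ref{proposition of rees order ideal thm and izumi rees} globalizes once a single Brian\c{c}on-Skoda bound and a single multiplicity bound are fixed.
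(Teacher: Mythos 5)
Your proposal is correct and follows exactly the route the paper intends: the paper gives no written proof of this corollary, merely asserting that it follows from Proposition~\ref{proposition of rees order ideal thm and izumi rees} together with the Uniform Brian\c{c}on-Skoda Property, and your argument supplies precisely the missing bookkeeping (descent of the Brian\c{c}on-Skoda bound to localizations, the global multiplicity bound $e$ from semicontinuity, and the trivial case $f\notin\fp$). The only caveat, inherited from the paper's own statement rather than introduced by you, is that the corollary omits the normality hypothesis that Proposition~\ref{proposition of rees order ideal thm and izumi rees} requires of $R_\fp$; in the intended setting (see Definition~\ref{definition uniform izumi rees} and the remark following it) $R$ is normal and your step ``each $R_\fp$ is an excellent local normal domain'' is then justified.
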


\subsection{Equimultiplicity} In proofs to come, we require a comparison of multiplicities of the form $e(R_\fp)$ and $e_{\fq}(R_\fq/xR_\fq)$ where $\fp\subseteq \fq\in\Spec(R)$ and $\fq R_\fq = (\fp,x)R_\fq$. Central to our comparisons of multiplicity is the notion of analytic spread. If $(R,\fm,k)$ is a local ring and $I\subseteq R$ is an ideal, then the \emph{analytic spread} of $I$ is the Krull dimension of the standard graded $k$-algebra $\Gr_{I}(R)\otimes_R R/\fm \cong \bigoplus_{n\geq 0}\frac{I^n}{\fm I^{n+1}}$ and denoted by $\ell(I)$. The following Theorem~\ref{theorem equimultiplicity as presented by Ilya} is an application of the theory of equimultiple ideals.  Theorem~\ref{theorem equimultiplicity as presented by Ilya} and the lemma that follows provide sufficient conditions for $e(R_\fp) = e_{\fq}(R_\fq/xR_\fq)$ whenever $\fp\subseteq \fq$ are prime ideals so that $\fq R_{\fq} = (\fp,x)R_{\fq}$ for some $x\in R$.

\begin{theorem}[{\cite[Theorem~4]{LipmanEquimultiplicity}}]
    \label{theorem equimultiplicity as presented by Ilya}
    Let $(R,\fm,k)$ be a formally equidimensional local ring of Krull dimension $d\geq 2$ and $I\subseteq R$ an ideal of height $d-1$. The following are equivalent.
    \begin{itemize}
        \item $\ell(I) = d-1$;
        \item For some parameter element $x\in \fm$ of $R/I$, $e_{(I,x)}(R) = \sum_{\fp\in\min(I)}e_{(x)}(R/\fp)e_{I}(R_\fp)$;
        \item For every parameter element $x\in \fm$ of $R/I$, $e_{(I,x)}(R) = \sum_{\fp\in\min(I)}e_{(x)}(R/\fp)e_{I}(R_\fp)$.
    \end{itemize}
\end{theorem}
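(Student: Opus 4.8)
The plan is to run the cycle of implications. The third statement implies the second trivially, so the content is in two steps: (a) if $\ell(I)=d-1$ then the displayed formula holds for every parameter $x$; (b) if the formula holds for some parameter $x$ then $\ell(I)=d-1$. In both steps I would first reduce to the case of an infinite residue field by the faithfully flat local base change $R\to R(t):=R[t]_{\fm R[t]}$, which preserves formal equidimensionality, $\dim R$, $\height I$, $\ell(I)$, the set $\min I$ (up to extension of primes), and each of the three multiplicities occurring in the statement. I would also record two consequences of formal equidimensionality that get used throughout: every $\fp\in\min I$ has $\height\fp=\height I=d-1$ and $\dim R/\fp=1$; and a parameter element $x\in\fm$ of $R/I$ is one making $(I,x)$ an $\fm$-primary ideal, so necessarily $x\notin\fp$ for every $\fp\in\min I$ and $x$ is a parameter of each $R/\fp$.

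For (a): since $\ell(I)=d-1=\height I$ and the residue field is infinite, pick a minimal reduction $J=(a_1,\dots,a_{d-1})$ of $I$ with exactly $d-1$ generators. Then $\sqrt J=\sqrt I$ has height $d-1$, so $\height(a_1,\dots,a_{d-1})=d-1$, and since $(J,x)$ is $\fm$-primary the $d$ elements $a_1,\dots,a_{d-1},x$ form a system of parameters of $R$. Because $J$ is a reduction of $I$, $(J,x)$ is a reduction of $(I,x)$, whence $e_{(I,x)}(R)=e_{(J,x)}(R)$; likewise $JR_\fp$ is a reduction of $IR_\fp$ for $\fp\in\min I$, so $e_I(R_\fp)=e_{JR_\fp}(R_\fp)$, and of course $\min J=\min I$. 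I would then invoke the associativity formula for the Hilbert--Samuel multiplicity of a system of parameters, in the form valid in a formally equidimensional local ring when the parameters are split into a sub-ideal of height $d-1$ and one further element (a standard refinement of the associativity formula over minimal primes, \cite[Theorem~14.7]{Matsumura}):
\[
e_{(I,x)}(R)=e_{(a_1,\dots,a_{d-1},x)}(R)=\sum_{\fp\in\min(a_1,\dots,a_{d-1})}e_{(x)}(R/\fp)\,e_{(a_1,\dots,a_{d-1})R_\fp}(R_\fp).
\]
Substituting $\min(a_1,\dots,a_{d-1})=\min I$ and the equalities above, the right-hand side becomes $\sum_{\fp\in\min I}e_{(x)}(R/\fp)\,e_I(R_\fp)$, which is the assertion.

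For (b) I would argue the contrapositive. Since $\height I\le\ell(I)\le\dim R$, the only alternative to $\ell(I)=d-1$ is $\ell(I)=d$, and the claim becomes: if $\ell(I)=d$ then $e_{(I,x)}(R)>\sum_{\fp\in\min I}e_{(x)}(R/\fp)\,e_I(R_\fp)$ for every parameter $x$. The clean way to subsume both (a) and (b) is the general comparison of equimultiplicity theory: for any parameter $x$ of $R/I$,
\[
e_{(I,x)}(R)\ \ge\ \sum_{\fp\in\min I}e_{(x)}(R/\fp)\,e_I(R_\fp),
\]
with equality if and only if $I$ is \emph{equimultiple}, i.e.\ $\ell(I)=\height I$. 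This is essentially B\"oger's equimultiplicity criterion combined with Rees' multiplicity theorem, and is part of \cite[Theorem~4]{LipmanEquimultiplicity} (cf.\ \cite{SwansonHuneke}); granting it, (b) follows at once.

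The main obstacle is exactly this inequality together with its equality criterion: that $e_{(I,x)}(R)$ never drops below the ``geometric'' count $\sum_\fp e_{(x)}(R/\fp)\,e_I(R_\fp)$, and that the defect between them detects precisely the jump of the analytic spread from $\height I$ to $\dim R$. A direct attack by length estimates is delicate, because the Hilbert functions $\ell(R_\fp/I^{m}R_\fp)$ carry lower-order ``embedded'' terms that grow with $m$; the honest proof proceeds through minimal reductions, generic hyperplane sections, and the (upper semi-)continuity of multiplicity --- the theory of equimultiple ideals --- which is why the statement is quoted from \cite[Theorem~4]{LipmanEquimultiplicity}.
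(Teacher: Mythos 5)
The paper offers no proof of this statement at all—it is quoted directly from \cite[Theorem~4]{LipmanEquimultiplicity}—so there is no in-paper argument to compare yours against. Your proof of the implication $\ell(I)=d-1\Rightarrow$ (formula for every parameter $x$) is correct and complete: after passing to $R(t)$ to arrange an infinite residue field, choosing a $(d-1)$-generated minimal reduction $J$ of $I$, noting that $(J,x)$ is a reduction of $(I,x)$ and $JR_\fp$ of $IR_\fp$, and applying the associativity law for multiplicities of a split system of parameters does yield the displayed identity; formal equidimensionality is exactly what guarantees that every $\fp\in\min(J)=\min(I)$ has $\height \fp = d-1$ and $\dim R/\fp = 1$, so no terms of the associativity sum are discarded. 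One small slip: \cite[Theorem~14.7]{Matsumura} is the additivity formula $e(\fq;M)=\sum_\fp \ell(M_\fp)\,e(\fq;R/\fp)$ over minimal primes, not the associativity law for systems of parameters; the latter is, e.g., \cite[Theorem~11.2.4]{SwansonHuneke} or Nagata's associativity formula. For the converse you do not actually give a proof: the inequality $e_{(I,x)}(R)\ge\sum_{\fp}e_{(x)}(R/\fp)\,e_I(R_\fp)$ together with its equality criterion ($\ell(I)=\height I$) is B\"oger's theorem, i.e.\ precisely the content of the result being cited, and you say as much. So your proposal genuinely establishes one implication and, exactly like the paper, rests the other on \cite{LipmanEquimultiplicity}; it should not be mistaken for a self-contained proof of the equivalence, but it is consistent with, and more informative than, the paper's bare citation.
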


\begin{lemma}
    \label{lemma exceptional components coming from an affine of a nonzero divisor of associated graded ring}
    Let $R$ be an excellent Noetherian normal domain, $(I,x)\subseteq R$ an ideal with the property that there exists $n_0\in\NN$ so that $((I,x)^n:_Rx)= (I,x)^{n-1}$ for all $n\geq n_0$. Let $\mathcal{R}_{(I,x)}$ denote the set of Rees valuations of $(I,x)$.
    \begin{itemize}
        \item For all $n\in\NN$, $(\overline{(I,x)^n}:_Rx)=\overline{(I,x)^{n-1}}$.
        \item For all $\nu\in \mathcal{R}_{(I,x)}$, $\nu(x)=\nu((I,x))$. 
        \item $\mathcal{R}_{(I,x)}$ is the set of all valuation domains of the associated primes of the principal ideal generated by $x$ in the affine chart $\overline{R\left[\frac{(I,x)}{x}\right]}$ of the normalized blowup of $(I,x)$.
    \end{itemize}
\end{lemma}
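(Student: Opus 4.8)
The plan is to prove the three items in the order listed; the first (a colon formula for integral closures) is the crux, and the second and third follow from it essentially formally. Write $J=(I,x)$, and note $J\neq 0$ (if $J=0$ then $x=0$, making the colon hypothesis impossible), so that $\mathcal{R}_J$ and the affine chart $\overline{R[J/x]}$ are defined. The one delicate point is that the hypothesis constrains only the ordinary powers $J^m$, whereas the first bullet concerns the integral closures $\overline{J^m}$; the bridge is the module-finite bound $\overline{J^m}\subseteq J^{m-c}$, valid for a fixed $c$ because $R$ is excellent and $\overline{R[JT]}=\bigoplus_{m\ge 0}\overline{J^m}T^m$ is a module-finite extension of the Rees algebra $R[JT]$ (choose homogeneous generators of degree at most $c$). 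Granting the first bullet, the second is a short computation inside the normalized extended Rees algebra, and the third is the affine-chart criterion Theorem~\ref{theorem standard facts about integral closure and valuations}(\ref{affine chart criteria}) applied with the element $x$.

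\emph{First bullet.} The containment $\overline{J^{n-1}}\subseteq(\overline{J^n}:_Rx)$ is immediate: $x\in J$ gives $x\,\overline{J^{n-1}}\subseteq J\,\overline{J^{n-1}}\subseteq\overline{J\cdot J^{n-1}}=\overline{J^n}$, using $\mathfrak a\,\overline{\mathfrak b}\subseteq\overline{\mathfrak a\mathfrak b}$. For the reverse containment, iterating the hypothesis gives $(J^m:_Rx^k)=J^{m-k}$ whenever $m\ge n_0+k-1$. Fix $n\ge 2$ (the cases $n\le 1$ are trivial) and $a\in R$ with $ax\in\overline{J^n}$. For $k\gg 0$ we get $(ax)^k\in(\overline{J^n})^k\subseteq\overline{J^{nk}}\subseteq J^{nk-c}$, hence $a^k\in(J^{nk-c}:_Rx^k)=J^{nk-c-k}\subseteq(J^{n-1})^{k-c'}$ with $c'=\roundup{c/(n-1)}$. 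Therefore $\nu(a)\ge\frac{k-c'}{k}(n-1)\nu(J)$ for every Rees valuation $\nu$ of $J$ and every large $k$; letting $k\to\infty$ yields $\nu(a)\ge(n-1)\nu(J)$, so $a\in\overline{J^{n-1}}$ by Rees' valuation criterion, Theorem~\ref{theorem standard facts about integral closure and valuations}(\ref{Valuation Criteria}).

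\emph{Second bullet.} Set $\overline{A}=\overline{R[JT,T^{-1}]}=\bigoplus_{m\in\mathbb{Z}}\overline{J^m}T^m$, so that the degree-$m$ component of $T^{-1}\overline{A}$ is $\overline{J^{m+1}}T^m$. A homogeneous $gT^m$ with $g\in\overline{J^m}$ lies in $(T^{-1}\overline{A}:_{\overline{A}}xT)$ iff $xg\in\overline{J^{m+2}}$, \ie iff $g\in(\overline{J^{m+2}}:_Rx)=\overline{J^{m+1}}$ by the first bullet; hence $(T^{-1}\overline{A}:_{\overline{A}}xT)=T^{-1}\overline{A}$, \ie $xT$ is a nonzerodivisor on $\overline{A}/T^{-1}\overline{A}$. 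Because $\overline{A}$ satisfies $(S_2)$ and $T^{-1}$ is a nonzerodivisor, the associated primes of $\overline{A}/T^{-1}\overline{A}$ are exactly the height-one primes $Q_\nu$, $\nu\in\mathcal{R}_J$, appearing in $T^{-1}\overline{A}=\bigcap_{\nu\in\mathcal{R}_J}Q_\nu^{(\nu(J))}$; since $xT$ is a nonzerodivisor on this ring, $xT\notin Q_\nu$ for every $\nu$. As $\nu(T^{-1})=\nu(J)$ and $\nu$ is $\mathbb{Z}$-valued, $xT\in Q_\nu$ would mean $\nu(x)\ge\nu(J)+1$, so in fact $\nu(x)\le\nu(J)$; combined with $x\in J$ this gives $\nu(x)=\nu(J)$ for every $\nu\in\mathcal{R}_J$.

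\emph{Third bullet.} Apply Theorem~\ref{theorem standard facts about integral closure and valuations}(\ref{affine chart criteria}) with $a=x$: its hypothesis $\nu(x)=\nu(J)$ for all $\nu\in\mathcal{R}_J$ is precisely the second bullet, so $\mathcal{R}_J$ consists exactly of the valuation rings of the associated primes of $x\,\overline{R[J/x]}$. That criterion is stated for local base rings, but it passes to the present setting by localizing at the centers $\fp$ of the members of $\mathcal{R}_J$ — the relevant affine chart of the normalized blowup of $JR_\fp$ being the localization of $\overline{R[J/x]}$ — or, equivalently, by rerunning its proof, normality of $\overline{R[J/x]}$ guaranteeing that the exceptional divisor $V(x)$ has no embedded components. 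As indicated, the real obstacle is the reverse containment in the first bullet: one cannot argue valuatively against $\overline{J^n}$ directly (that would presuppose $\nu(x)=\nu(J)$), and the passage through $k$-th powers together with the bound $\overline{J^m}\subseteq J^{m-c}$ is exactly what converts the ordinary-power hypothesis into a statement about integral closures; everything after that is bookkeeping in the normalized extended Rees algebra.
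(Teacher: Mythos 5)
Your proof is correct and follows the same overall route as the paper: establish the colon formula for integral closures, use it to see that $xT$ is a nonzerodivisor on $\overline{\Gr}_{(I,x)}(R)$ and hence avoids every exceptional prime $Q_\nu$, read off $\nu(x)=\nu(xT)+\nu(T^{-1})=\nu((I,x))$, and finish with the affine-chart criterion of Theorem~\ref{theorem standard facts about integral closure and valuations}(\ref{affine chart criteria}). The only divergence is in the reverse containment of the first bullet, where the paper applies the characterization of integral closure by the existence of $0\neq c$ with $c(xr)^t\in (I,x)^{nt}$ for $t\gg 0$ and then colons out $x^t$, whereas you pass through the module-finite bound $\overline{J^m}\subseteq J^{m-c}$ and Rees' valuation criterion; both are standard and valid.
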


\begin{remark}
    \label{remark when hypothesis of lemma is met}
    Let $R$ be an excellent Noetherian normal domain, $I\subseteq R$ an ideal, and $x\in R$ an element so that $(I,x)$ is a proper ideal. If there exists $n_0\in\NN$ so that $x$ avoids all primes of $\bigcup_{n\geq n_0}\Ass(R/I^n)$ then $(I,x)$ satisfies the hypothesis of Lemma~\ref{lemma exceptional components coming from an affine of a nonzero divisor of associated graded ring}. First note that $((I,x)^n:_Rx)\supseteq (I,x)^{n-1}$ without any assumptions. If $n\geq n_0$ and $xr\in(I,x)^n=(I^n,x(I,x)^{n-1})$ then there exist $g\in (I,x)^{n-1}$ so that $xr - xg\in I^n$. Hence $r-g\in (I^n:_Rx)=I^n$ and therefore $r\in (I^n,(I,x)^{n-1})=(I,x)^{n-1}$.
\end{remark}

\begin{proof}
    The containment $(\overline{(I,x)^n}:_Rx)\supseteq \overline{(I,x)^{n-1}}$ is an elementary containment property of ideals and is true without assuming $((I,x)^n:_Rx)= (I,x)^{n-1}$. Suppose that $r\in (\overline{(I,x)^n}:_Rx)$, i.e., $xr\in \overline{(I,x)^n}$. By \cite[Corollary~6.8.12]{SwansonHuneke}, there exists $0\not= c \in R$ so that $c(xr)^t\in (I,x)^{nt}$ for all $t\gg 0$. Therefore $cr^t\in ((I,x)^{nt}:_Rx^t)=(I,x)^{(n-1)t}$ for all $t\gg 0$. Hence $r\in \overline{(I,x)^{n-1}}$ by a second application of \cite[Corollary~6.8.12]{SwansonHuneke}.

    Consider the associated graded ring of the normalized extended Rees algebra of $(I,x)$,
    \[
    \overline{\Gr}_{(I,x)}(R):=\frac{\overline{R[(I,x)T,T^{-1}]}}{T^{-1}\overline{R[(I,x)T,T^{-1}]}} = \bigoplus_{n\geq 0}\frac{\overline{(I,x)^n}}{\overline{(I,x)^{n+1}}} T^n.
    \]
    The equality of ideals $(\overline{(I,x)^n}:_Rx)=\overline{(I,x)^{n-1}}$ implies that the degree $1$ element $xT$ is a nonzero divisor of $\overline{\Gr}_{(I,x)}(R)$ . Equivalently, the degree $1$ element $xT$ of the normalized extended Rees algebra $\overline{R[(I,x)T,T^{-1}]}$ avoids all exceptional primes of $\overline{R[(I,x)T,T^{-1}]}$. If $W$ is the complement of the union of the exceptional primes of $\overline{R[(I,x)T,T^{-1}]}$, then $xT$ belongs to $W$ and there are maps of homogeneous localizations
    \[
    \overline{R[(I,x)T,T^{-1}]}_{xT}\cong \overline{R\left[\frac{(I,x)}{(x)}\right]}[T,T^{-1}] \to \overline{R[(I,x)T,T^{-1}]}_W.
    \]
    Even further, $x = xTT^{-1}$ and so $\nu(x) = \nu(xT)+\nu(T^{-1})=0+\nu((I,x))$.
    Therefore $\nu(x)=\nu((I,x))$ for all $\nu\in\mathcal{R}_{(I,x)}$. The third claim of the lemma is an application of Theorem~\ref{theorem standard facts about integral closure and valuations} part (\ref{affine chart criteria}).
\end{proof}

\begin{theorem}[An Application of Equimultiplicity Theory]
    \label{equimultiplicity theorem}
    Let $R$ be an excellent Noetherian normal domain and $\fp\subsetneq \fq\in\Spec(R)$. Assume that $\fq R_\fq = (\fp, x)R_\fp$. The following are equivalent.
    \begin{enumerate}
        \item $\fq R_{\fq} \not \in \bigcup_{n\geq 1}\Ass_R(R_\fq/\overline{\fp^n}R_\fq)$;
        \item $\ell(\fp R_\fq)=\height(\fp)$;
        \item $e(R_\fp) = e\left(R_\fq\right)$.
    \end{enumerate}
    Moreover, if there exists an $n_0$ so that $\fq\not \in \bigcup_{n\geq n_0}\Ass_R(R_\fq/\fp^nR_\fq)$. Then 
    \[
    e(R_\fp)= e\left(R_\fq\right) = e\left(\frac{R_\fq}{xR_\fq}\right).
    \]
\end{theorem}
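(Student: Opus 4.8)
The plan is to begin by localizing at $\fq$: all the statements involve only $R_\fq$, $R_\fp$, analytic spreads, and sets of associated primes contained in $\fq$, and integral closure of ideals commutes with localization, so we may replace $R$ by $R_\fq$ and assume $(R,\fq)$ is an excellent local normal domain with $\fp\subseteq\fq$ and $\fq=(\fp,x)$, keeping the notation $R_\fp$ for the localization at $\fp$. We may assume $\fp\subsetneq\fq$; and if $\height\fp=0$ then $\fp=(0)$ and $R$ is a DVR with uniformizer $x$, in which case all three conditions hold and $e(R_\fp)=e(R)=e(R/xR)=1$. So assume $h:=\height\fp\ge1$, hence $\dim R=h+1=:d\ge2$. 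The single structural input that drives everything is that, because $\fq=(\fp,x)$, the ring $R/\fp$ is a one-dimensional Noetherian local domain whose maximal ideal is generated by the image of $x$ alone; hence $R/\fp$ is a DVR, $x$ maps to a uniformizer, and $e_{(x)}(R/\fp)=1$.

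To obtain $(2)\Leftrightarrow(3)$ I would apply Lipman's equimultiplicity theorem (Theorem~\ref{theorem equimultiplicity as presented by Ilya}) to the ideal $\fp$, of height $d-1$, in $R$, which is formally equidimensional (being an excellent local domain it is universally catenary, and it is equidimensional as its unique minimal prime is $(0)$). Taking $x$ as the distinguished parameter of $R/\fp$ and using $\min(\fp)=\{\fp\}$, $(\fp,x)=\fq$, and $e_{(x)}(R/\fp)=1$ from the previous paragraph, the associativity identity in Lipman's theorem collapses to
\[
\ell(\fp)=d-1\quad\Longleftrightarrow\quad e(R)\;=\;e_{(x)}(R/\fp)\cdot e(R_\fp)\;=\;e(R_\fp),
\]
which is exactly $(2)\Leftrightarrow(3)$.

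For $(1)\Leftrightarrow(2)$ I would use Theorem~\ref{theorem standard facts about integral closure and valuations}(\ref{associated primes stabilize}) to identify $\bigcup_{n\ge1}\Ass_R(R/\overline{\fp^n})$ with the set $\mathcal C$ of centers of Rees valuations of $\fp$. Every $\mathfrak P\in\mathcal C$ satisfies $\fp\subseteq\mathfrak P\subseteq\fq$, so $x\in\mathfrak P$ forces $\mathfrak P\supseteq(\fp,x)=\fq$, that is, $\mathfrak P=\fq$; hence $(1)$ holds if and only if $\fq\notin\mathcal C$. By the analytic-spread criterion for centers, Theorem~\ref{theorem standard facts about integral closure and valuations}(\ref{analytic spread criteria for center}), $\fq\in\mathcal C$ if and only if $\ell(\fp)=d$; since $d-1=\height\fp\le\ell(\fp)\le d$, this gives $(1)\Leftrightarrow\ell(\fp)=d-1\Leftrightarrow(2)$.

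For the ``moreover'' clause, assume $x\notin\bigcup_{n\ge n_0}\Ass_R(R/\fp^n)$. Then $x$ is a nonzerodivisor on $R/\fp^n$ for $n\ge n_0$, so $(\fp^n:x^m)=\fp^n$ for all $m\ge1$ and $n\ge n_0$; combined with the description of $\overline{\fp^n}$ through the condition that $c\,r^m\in(\fp^n)^m$ for some fixed $0\ne c$ and all $m\gg0$ (\cite[Corollary~6.8.12]{SwansonHuneke}), a short calculation shows that $x$ is a nonzerodivisor on $R/\overline{\fp^n}$ for \emph{every} $n$, i.e.\ $(1)$ holds; by the equivalences just proved, $(2)$ and $(3)$ hold, so $e(R_\fp)=e(R)$. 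Separately, Remark~\ref{remark when hypothesis of lemma is met} applied with the ideal $\fp$ gives $(\fq^n:x)=\fq^{n-1}$ for $n\ge n_0$, so $x$ is a superficial element of $\fq$; being also a nonzerodivisor on $R$ (so $\dim R/xR=d-1\ge1$), the standard superficial-element formula for Hilbert--Samuel multiplicity gives $e(R)=e(R/xR)$, both multiplicities taken with respect to the maximal ideal. Chaining the two equalities yields $e(R_\fp)=e(R)=e(R/xR)$. I expect the steps requiring the most care to be the reduction to the local ring together with the degenerate primes, and the transfer of the hypothesis from $\Ass(R/\fp^n)$ to $\Ass(R/\overline{\fp^n})$ --- which genuinely needs the $c\,r^m$ description of integral closure rather than a naive containment --- while the remainder is a matter of assembling the quoted results.
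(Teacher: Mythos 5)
Your proof is correct, and its backbone is the same as the paper's: Lipman's equimultiplicity theorem (Theorem~\ref{theorem equimultiplicity as presented by Ilya}) is the engine for the core equivalence, and the hypothesis of the ``moreover'' clause is transferred from ordinary to integral closures of powers exactly as in the paper's Lemma~\ref{lemma exceptional components coming from an affine of a nonzero divisor of associated graded ring} and Remark~\ref{remark when hypothesis of lemma is met}. Where you diverge is instructive in two places. First, the paper simply asserts that the equivalence of (1), (2), (3) ``is the content of'' Lipman's theorem, whereas Lipman's statement only directly yields $(2)\Leftrightarrow(3)$ after the observations you make ($\min(\fp)=\{\fp\}$, $R_\fq/\fp R_\fq$ a DVR, $e_{(x)}=1$); linking (1) to the analytic spread genuinely requires, as you supply, the identification of $\bigcup_n\Ass(R_\fq/\overline{\fp^n}R_\fq)$ with the centers of Rees valuations of $\fp R_\fq$ (Theorem~\ref{theorem standard facts about integral closure and valuations}(\ref{associated primes stabilize})), the observation that the only center that could contain $x$ is $\fq$ itself, and the analytic spread criterion (Theorem~\ref{theorem standard facts about integral closure and valuations}(\ref{analytic spread criteria for center})). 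Your write-up makes explicit a step the paper leaves implicit. Second, for the final equality $e(R_\fq)=e(R_\fq/xR_\fq)$ the paper deduces $\nu(x)=\nu(\fq R_\fq)$ for every Rees valuation $\nu$ of $\fq R_\fq$ and then applies Rees' Order Ideal Theorem together with Corollary~\ref{corollary Rees multiplicity formula}; you instead read the same colon condition $(\fq^nR_\fq:x)=\fq^{n-1}R_\fq$ as saying $x$ is a superficial nonzerodivisor and invoke the classical superficial-element reduction of multiplicity. These are equivalent in substance (superficiality, membership in a minimal reduction, and $\nu(x)=\nu(\fq)$ for all Rees valuations are the same condition here), and your route avoids the Order Ideal Theorem at the cost of importing the standard superficial-element formula, for which your check that $\dim R_\fq\geq 2$ (with the one-dimensional case handled separately at the outset) is exactly the needed hypothesis. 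One cosmetic remark: your justification that $R_\fq$ is formally equidimensional should rest on Ratliff's theorem that a universally catenary local \emph{domain} has equidimensional completion, not on equidimensionality of $R_\fq$ itself; the conclusion you use is nonetheless correct.
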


\begin{proof}
    Equivalence of $(a)$ and $(b)$ can be derived from \cite[Theorem~2.6]{Rees1981}. For a direct presentation, recall that $\fq \in \bigcup_{n\geq 1}\Ass_R(R_\fq/\overline{\fp^n}R_\fq)$ if and only if $\fq$ is a center of a Rees valuation of $\fp$, see Theorem~\ref{theorem standard facts about integral closure and valuations} (\ref{associated primes stabilize}), if and only if $\ell(\fp R_{\fq}) = \height(\fq)$ by \cite[Theorem~10.4.2]{SwansonHuneke}. Moreover, $\height(\fq) = \height(\fp) + 1$ as $\fq R_{\fq} = (\fp,x)R_{\fq}$. Note that $R_{\fq}/\fp R_{\fq}$ is a discrete valuation ring whose maximal ideal is generated by the image of $x$. Therefore equivalence of $(a)$, $(b)$, and $(c)$ is an application of Theorem~\ref{theorem equimultiplicity as presented by Ilya}. 
    
    If $\fq R_\fq$ is not among the elements of $\bigcup_{n\geq n_0}\Ass_R(R_\fq/\fp^nR_\fq)$ then $\fq R_{\fq}\not \in \bigcup_{n\geq 1}\Ass_R(R_\fq/\overline{\fp^n}R_\fq)$, see Theorem~\ref{theorem standard facts about integral closure and valuations} (\ref{associated primes stabilize}), and hence $e(R_\fp)=e(R_\fq)$ by the above. By Lemma~\ref{lemma exceptional components coming from an affine of a nonzero divisor of associated graded ring} and Remark~\ref{remark when hypothesis of lemma is met}, if $\nu\in\RR_{\fq R_\fq}$ then $\nu(x)=\nu(\fq)$. By Theorem~\ref{theorem Rees multiplicity formula} and Corollary~\ref{corollary Rees multiplicity formula}, $e(R_\fq) = e(R_\fq/xR_\fq)$.
\end{proof}

The following specific corollary of Theorem~\ref{equimultiplicity theorem} is used in the proof of Theorem~\ref{theorem uniform izumi arithmetically normal}.

   \begin{corollary}
       \label{corollary extended rees algebra multiplicity computation}
      Let $k$ be a field, and $R=k[x_1,\ldots,x_n]/P$ an affine normal domain, where $P$ is a prime ideal. Endow the polynomial ring $k[x_1,\ldots,x_n]$ with the standard grading (we are not assuming $R$ is graded ring). Consider $f_t,\ldots,f_{t+c}\in k[x_1,\ldots,x_n]$ such that $f_i$ is either $0$ or homogeneous of degree $i$. Assume that the image of $f:=f_t+f_{t+1}+\cdots+f_{t+c}$ in $R$ is nonzero. Let $T^{-1}$ be a variable of degree $-1$ and $k[x_1,\ldots,x_n,T^{-1}]$, a $\ZZ$-graded polynomial ring.

      Define $f'=f_t+T^{-1}f_{t+1}+\cdots + T^{-c}f_{t+c}$ in the $\ZZ$-graded polynomial ring $k[x_1,\ldots,x_n,T^{-1}]$. By changing the expansion of $f'$ around $T^{-1}=0$ to an expansion around $T^{-1}=1$ in the polynomial ring $k[x_1,\ldots,x_n,T^{-1}]$, we obtain polynomial functions $g_t,g_{t+1},\ldots,g_{t+c}\in k[x_1,\ldots,x_n]$ so that
      \[
      f' = (f_t+ f_{t+1}+\cdots+f_{t+c}) +(T^{-1}-1)g_{t}+\cdots + (T^{-1}-1)^cg_{t+c}.
      \]
      Abuse notation and let $f_i, g_i$ denote the images of $f_i,g_i$ in the quotient ring $R$ for each $t\leq i\leq t+c$. For any Rees valuation $\nu\in\RR_{\fm}$ of the maximal ideal $\fm=(x_1,\ldots,x_n)$ of $R$, 
      \[
      \min\{\nu(f_t(\underline{x})+f_{t+1}(\underline{x})+\cdots + f_{t+c}(\underline{x})), \nu(g_t(\underline{x})),\ldots,\nu(g_{t+c}(\underline{x}))\} = \nu(f_t(\underline{x})+f_{t+1}(\underline{x})+\cdots + f_{t+c}(\underline{x})).
      \]
   \end{corollary}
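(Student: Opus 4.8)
The plan is to show that the ``correction terms'' $g_{t+1},\dots,g_{t+c}$ are negligibly small for $\nu$, so that $\nu(f(\underline x))$ is controlled entirely by the lowest piece $f_t$. In the expansion of $f'$ about $T^{-1}=1$ only the constant term involves $f_t$, so each $g_{t+j}$ is a $k$-linear combination of $f_{t+1},\dots,f_{t+c}$, hence a combination of forms of degree $\ge t+1$; consequently the residues $g_{t+j}(\underline x)$ and $f_{t+l}(\underline x)$ for $l\ge 1$ all lie in $\fm^{t+1}$, so $\nu(g_{t+j}(\underline x))\ge (t+1)\nu(\fm)$ and $\nu(f_{t+l}(\underline x))\ge (t+1)\nu(\fm)$ for $l\ge 1$. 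Since $f(\underline x)=f_t(\underline x)+\sum_{l\ge 1}f_{t+l}(\underline x)$, the whole statement reduces to the single equality $\nu(f_t(\underline x))=t\,\nu(\fm)$: granting it, $\nu(f(\underline x))=\nu(f_t(\underline x))=t\,\nu(\fm)<(t+1)\nu(\fm)\le\nu(g_{t+j}(\underline x))$ for all $j$, which is exactly the asserted minimum.

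To obtain $\nu(f_t(\underline x))=t\,\nu(\fm)$ I would use the normalized blowup of $\fm$. Under the standing hypotheses, Proposition~\ref{proposition Rees valuation of standard graded normal domain} applies: $\Gr_\fm(R)$ is a standard graded normal domain, so $\fm$ has a single Rees valuation, namely the $\fm$-adic order, with $\nu(\fm)=1$; hence $\nu(f_t(\underline x))=\ord_\fm(f_t(\underline x))$, and this equals $t$ because $f_t$ is a nonzero form of degree $t$ in $\Gr_\fm(R)$. (Equivalently, by Lemma~\ref{lemma exceptional components coming from an affine of a nonzero divisor of associated graded ring} and Theorem~\ref{equimultiplicity theorem}, the value $f_t(\underline x)$ acquires on an affine chart $\overline{R[\fm/a]}$ of the normalized blowup is $t$ times the value of the exceptional divisor.) I expect this to be the real obstacle: everything else is formal, while pinning $\nu(f_t(\underline x))$ to its expected value --- equivalently, ruling out cancellation among the graded pieces of $f$ once they are passed to $R$ --- is the one place where the geometry of the blowup of $\fm$ must genuinely be fed in.

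A clean way to package the ``negligibility'' is to adjoin $T^{-1}$ as a variable, put $R'=R[T^{-1}]$, $\fm'=\fm R'$, and let $\nu'$ be the Gaussian extension of $\nu$; by Lemma~\ref{lemma reduction to an infinite field} $\nu'\in\RR_{\fm'}$, it restricts to $\nu$ on $R$, and it reads off valuations as the minimum of the coefficient valuations in \emph{either} of the coordinates $T^{-1}$ or $T^{-1}-1$ (the transition between them has integer coefficients, on which $\nu$ is non-negative). Applying this to the residue $\tilde f'\in R'$ of $f'$, expanded once in powers of $T^{-1}$ and once in powers of $T^{-1}-1$, yields
\[
\min\big\{\nu(f(\underline x)),\nu(g_t(\underline x)),\dots,\nu(g_{t+c}(\underline x))\big\}=\nu'(\tilde f')=\min_{0\le l\le c}\nu\big(f_{t+l}(\underline x)\big),
\]
and since $f(\underline x)=\sum_l f_{t+l}(\underline x)$ gives $\nu(f(\underline x))\ge\min_l\nu(f_{t+l}(\underline x))$, the stated equality drops out once $\nu(f_t(\underline x))=t\,\nu(\fm)$ is known. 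Thus the Gaussian computation is only bookkeeping that isolates the essential point, which is the leading-piece estimate above.
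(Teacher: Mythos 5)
Your Gaussian-extension bookkeeping is sound and in fact recovers half of the paper's argument: the extension $\nu'$ of $\nu$ to $R[T^{-1}]$ computed in the coordinate $T^{-1}$ agrees with the one computed in $T^{-1}-1$, which yields the identity $\min\{\nu(f),\nu(g_t),\ldots,\nu(g_{t+c})\}=\min_{0\le l\le c}\nu(f_{t+l})$ and correctly reduces the corollary to the ``no cancellation'' statement $\nu(f)=\min_l\nu(f_{t+l})$. The gap is in your proof of that statement. The claim $\nu(f_t(\underline x))=t\,\nu(\fm)$ is false, and the appeal to Proposition~\ref{proposition Rees valuation of standard graded normal domain} is illegitimate: that proposition requires the ring to be standard graded so that $\Gr_\fm(R)$ is a normal domain, whereas here $R$ is explicitly \emph{not} assumed graded, $P$ need not be homogeneous, and $\Gr_\fm(R)$ is typically not even a domain --- the entire point of the corollary is to handle maximal ideals with several Rees valuations. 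Concretely, for the paper's own example $R=\CC[x_1,x_2,x_3]/(x_1x_2+x_3^3)$, $\fm=(x_1,x_2,x_3)$, $\RR_\fm=\{\nu_1,\nu_2\}$, take $f=f_1=x_1$, so $t=1$: in the chart at $(T^{-1},y_1)$ one has $y_1=-T^{-1}y_3^3y_2^{-1}$, hence $\nu_1(x_1)=\nu_1(T^{-1}y_1)=2$, while $t\,\nu_1(\fm)=1$. Thus $\nu(f_t)$ need not equal $t\,\nu(\fm)$, the strict separation $\nu(f_t)<\nu(f_{t+l})$ for $l\ge1$ can fail (e.g.\ $\nu_1(x_1)=2=\nu_1(x_3^2)$), and your ultrametric argument no longer pins down $\nu(f)$.

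The inequality $\nu(f)\le\nu(f_{t+l})$ for all $l$ is exactly where the geometry must enter, as you suspected, but the correct input is not a leading-term computation valuation by valuation. The paper's mechanism is global: $f'(\underline y)$ is homogeneous in the $\ZZ$-graded ring $R[\fm T,T^{-1}]$, so the ideals $((\fm T)^n,f')$ are homogeneous and the non-homogeneous element $T^{-1}-1$ avoids all of their associated primes; Theorem~\ref{equimultiplicity theorem} then gives $e\bigl((R[\fm T,T^{-1}]/(f'))_{\fm T}\bigr)=e\bigl((R/fR)_\fm\bigr)$. Expanding both multiplicities by Rees' Order Ideal Theorem (Theorem~\ref{theorem Rees multiplicity formula}) produces $\sum_{\nu'}\nu'(f')d_{\nu'}=\sum_{\nu'}\nu(f)d_{\nu'}$ with the same weights $d_{\nu'}\ge1$, and since $\nu'(f')\le\nu(f)$ termwise, equality of the sums forces $\nu'(f')=\nu(f)$ for every $\nu$. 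You would need to supply this (or an equivalent) multiplicity argument that couples all the Rees valuations at once; no estimate on the individual graded pieces for a single $\nu$ will close the gap.
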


   \begin{proof}
       Consider the extended Rees algebra $R[\fm T,T^{-1}]$. Then $R[\fm T,T^{-1}]$ is a $\ZZ$-graded algebra. Let $y_1,\ldots,y_n$ denote degree $1$-generators of the homogeneous ideal $(\fm T)\subseteq R[\fm T, T^{-1}]$. Then $R[\fm T,T^{-1}]$ is the homomorphic image of the $\ZZ$-graded polynomial ring $k[y_1,\ldots,y_n,T^{-1}]$.  Observe that $(\fm T)\subseteq R[\fm T,T^{-1}]$ is a prime ideal as $R[\fm T,T^{-1}]/(\fm T)\cong k[T^{-1}]$. Moreover,  $R[\fm T,T^{-1}]_{\fm T}\cong R_\fm(T)$. Hence there is a bijection between the Rees valuations of $\fm\subseteq R$ and $\fm T\subseteq R[\fm T,T^{-1}]$ centered on $\fm T$ given by Gaussian extension, see Lemma~\ref{lemma reduction to an infinite field}. 
       
       Let $f'(\underline{y})$ be the homogeneous degree $t$ element of $k[y_1,\ldots,y_n,T^{-1}]$ obtained by substituting $y_i$ for $x_i$. For each $n\in\NN$ the ideal $((\fm T)^n,f')\subseteq R[\fm T,T^{-1}]$ is homogeneous and admits a homogeneous primary decomposition. Therefore the non-homogeneous element $T^{-1}-1$ avoids such components. By Theorem~\ref{equimultiplicity theorem},
       \[
       e\left(\left(\frac{R[\fm T,T^{-1}]}{(f'(\underline{y}))}\right)_{\fm T}\right)=e\left(\left(\frac{R[\fm T,T^{-1}]}{(f'(\underline{y}),T^{-1}-1)}\right)_{(\fm T,T^{-1}-1)}\right).
       \]
       Observe that 
       \[
       \frac{R[\fm T,T^{-1}]}{(f'(\underline{y}),T^{-1}-1)}\cong \frac{R}{(f_t+f_{t+1}+\cdots+f_{t+c})R}.
       \]
       Therefore
       \begin{align}
       \label{equation to reference multiplicity 1}
       e\left(\left(\frac{R[\fm T,T^{-1}]}{(f'(\underline{y}))}\right)_{\fm T}\right)&=e\left(\left(\frac{R[\fm T,T^{-1}]}{(f'(\underline{y}),T^{-1}-1)}\right)_{(\fm T,T^{-1}-1)}\right)\\
       \label{equation to reference multiplicity 2}
       &= e\left(\left(\frac{R}{(f_t+f_{t+1}+\cdots+f_{t+c})R}\right)_{\fm}\right).
       \end{align}
       
       As an element of the polynomial ring $k[y_1,\ldots,y_n,T^{-1}]$,
       \[
       f'(\underline{y})=(f_t(\underline{y})+f_{t+1}(\underline{y})+\cdots + f_{t+c}(\underline{y})) + (T^{-1}-1)g_t(\underline{y})+\cdots+(T^{-1}-1)^cg_{t+c}(\underline{y}).
       \]
       The Rees valuations of $\RR_{(\fm T)_{(\fm T,T^{-1}-1)}}$ are the Gaussian extensions of the Rees valuations of $\RR_\fm$. If $\nu'\in \RR_{(\fm T)_{(\fm T,T^{-1}-1)}}$ we let $\nu\in \RR_\fm$ the corresponding Rees valuation of $\fm\subseteq R$. For every $\nu'\in \RR_{(\fm T)_{(\fm T,T^{-1}-1)}}$
       \begin{align}\label{equation gaussian valuation}
       \nu'(f') = \min\{\nu(f_t+f_{t+1}+\cdots + f_{t+c}), \nu(g_t),\ldots,\nu(g_{t+c})\}.
       \end{align}
       
       By Theorem~\ref{theorem Rees multiplicity formula}, for each $\nu'\in \RR_{(\fm T)_{(\fm T,T^{-1}-1)}}$ there are natural numbers $d_{\nu'}\geq 1$ so that
       \[
       e\left(\left(\frac{R[\fm T,T^{-1}]}{(f')}\right)_{\fm T}\right) = \sum_{\nu'\in \RR_{(\fm T)_{(\fm T,T^{-1}-1)}}}\nu'(f')d_{\nu'}
       \]
       and
       \[
       e\left(\left(\frac{R[\fm T,T^{-1}]}{(f_t+f_{t+1}+\cdots+f_{t+c})}\right)_{\fm T}\right) = \sum_{\nu'\in \RR_{(\fm T)_{(\fm T,T^{-1}-1)}}}\nu(f_t+f_{t+1}+\cdots+f_{t+c})d_{\nu'}.
       \]
       By (\ref{equation to reference multiplicity 1}) and (\ref{equation to reference multiplicity 2}),
       \begin{align}\label{equation gaussian valuation rees order}
       \sum_{\nu'\in \RR_{(\fm T)_{(\fm T,T^{-1}-1)}}}\nu'(f')d_{\nu'} = \sum_{\nu'\in \RR_{(\fm T)_{(\fm T,T^{-1}-1)}}}\nu(f_t+f_{t+1}+\cdots+f_{t+c})d_{\nu'}.
       \end{align}
       By (\ref{equation gaussian valuation}) and (\ref{equation gaussian valuation rees order}), 
       \begin{align*}
           \min\{\nu(f_t(\underline{y})+f_{t+1}(\underline{y})+\cdots + f_{t+c}(\underline{y})), \nu(g_t(\underline{y})),\ldots,\nu(g_{t+c}(\underline{y}))\} = \nu(f_t+f_{t+1}+\cdots+f_{t+c}).
       \end{align*}       
   \end{proof}

\subsection{Homogenization and Projective Closures}\label{subsection homogenization}
    Let $k$ be a field and $R=\frac{k[x_1,x_2\ldots,x_n]}{P}$ an affine domain, $W$ a multiplicative set, and $R_W=\left(\frac{k[x_1,x_2\ldots,x_n]}{P}\right)_W$. The homogenization of $R$, with respect to the presentation $R = \frac{k[x_1,x_2,\ldots,x_n]}{P}$, produces a standard graded domain $S$ that is the coordinate ring of a choice of projective closure of $\Spec(R_W)$. The graded ring $S$ can be derived as follows: let $X_0,X_1,\ldots,X_n$ be variables, $x_i=\frac{X_i}{X_0}$, 
    \[
    \prescript{h}{}{P}=\left(F\in k[X_0,X_1,\ldots,X_n]\mid F\mbox{ is homogeneous and } \frac{F}{X_0^{\deg(F)}}\in P\right),
    \]
    and $S = k[X_0,X_1,X_2,\ldots,X_n]/\prescript{h}{}{P}$. The homogeneous maximal ideal $\mathcal{M}=(X_0,X_1,\ldots,X_n)$ is the irrelevant homogeneous ideal of $S$. Given a homogeneous element $F\in S$, we can lift $F$ to homogeneous element of the polynomial ring $k[X_0,X_1,\ldots,X_n]$ and define $\prescript{a}{}{F}$ to be the image of $\frac{F}{X_0^{\deg(F)}}$ in $R$. The operation $\prescript{a}{}{-}$ is well-defined as $R$ is a domain. Given an ideal $I\subseteq R$, $\prescript{h}{}{I}$ is the ideal generated homogeneous elements $F$ such that $\prescript{a}{}{F}\in I$. The operation $\prescript{h}{}{-}$ is not well-defined on elements of $R$, only the ideals of $R$. The affine variety $\Spec(R)$ is the open subset of the affine piece $D(X_0)$ of $\Proj_k(S)$.

    If $R_W$ is normal, normalization commutes with localization, allowing us to adjust the presentation of $R$ such that $R$ is also normal. Moreover, by embedding $\overline{\Proj_k(S)}$ into projective space, we can refine the presentation of $R$ further to assume that the homogenization $S$ of $R$ is a standard graded normal domain. Consequently, every normal domain essentially of finite type over a field can be viewed as an open subset of an arithmetically normal projective variety.

    The following proposition details points out a well-known relationship between an affine domain $R = k[x_1,x_2,\ldots,x_n]/P$ and the homogeneous ring $S = k[X_0,X_1,X_2,\ldots,X_n]/\prescript{h}{}{P}$.

    \begin{proposition}
        \label{proposition: homogeneization properties wrt localization}
        Let $k$ be a field, $R = k[x_1,x_2,\ldots,x_n]/P$ an affine domain, $X_0,X_1,\ldots,X_n$ homogeneous variables of degree $1$ so that $\frac{X_i}{X_0}= x_i$, and $S$ the standard graded domain $k[X_0,X_1,\ldots,X_n]/\prescript{h}{}{P}$. Abuse notation by letting $X_i$ denote the image $X_i$ in $S$. Then as subsets of the fraction field of $S$,
        \[
        S\left[\frac{X_1}{X_0},\ldots,\frac{X_n}{X_0}\right] \cong R[X_0].
        \]
    \end{proposition}
    \begin{proof}
        The algebra $S\left[\frac{X_1}{X_0},\ldots,\frac{X_n}{X_0}\right]$ is an affine chart of the blowup of the homogeneous maximal ideal of $S$. Therefore $\dim\left(S\left[\frac{X_1}{X_0},\ldots,\frac{X_n}{X_0}\right]\right) = \dim(R)+1$. Moreover, $S\left[\frac{X_1}{X_0},\ldots,\frac{X_n}{X_0}\right]$ is the homomorphic image of the polynomial algebra $k\left[X_0,\frac{X_1}{X_0},\ldots, \frac{X_n}{X_0}\right]$ and the kernel $\fp$ of $k\left[X_0,\frac{X_1}{X_0},\ldots, \frac{X_n}{X_0}\right]\to S\left[\frac{X_1}{X_0},\ldots,\frac{X_n}{X_0}\right]$ contains the extension $\prescript{h}{}{P}$ to $k\left[X_0,\frac{X_1}{X_0},\ldots, \frac{X_n}{X_0}\right]$. The element $X_0$ is a nonzero divisor of $S\left[\frac{X_1}{X_0},\ldots, \frac{X_n}{X_0}\right]$, therefore 
        \[
        \left(\prescript{h}{}{P}k\left[X_0,\frac{X_1}{X_0},\ldots, \frac{X_n}{X_0}\right]:_{k\left[X_0,\frac{X_1}{X_0},\ldots, \frac{X_n}{X_0}\right]} X_0^{\infty}\right)\subseteq \fp.
        \]
        Recall that $\prescript{h}{}{P} = \left(\prescript{h}{}{f}\mid f\in P\right)$. Therefore $Pk\left[X_0,\frac{X_1}{X_0},\ldots, \frac{X_n}{X_0}\right]\subseteq \fp$. But $Pk\left[X_0,\frac{X_1}{X_0},\ldots, \frac{X_n}{X_0}\right]$ is a prime so that $\dim\left(\frac{k\left[X_0,\frac{X_1}{X_0},\ldots, \frac{X_n}{X_0}\right]}{Pk\left[X_0,\frac{X_1}{X_0},\ldots, \frac{X_n}{X_0}\right]}\right) =\dim(R)+1$. Therefore $\fp = Pk\left[X_0,\frac{X_1}{X_0},\ldots, \frac{X_n}{X_0}\right]$ and $S\left[\frac{X_1}{X_0},\ldots,\frac{X_n}{X_0}\right] \cong R[X_0]$ as claimed.        
    \end{proof}

\section{The Uniform Izumi-Rees Property}
\label{Section Uniform Izumi}
This section presents the proof of Main Theorem~\ref{Main Theorem eft over an algebraically closed field} and Main Theorem~\ref{Main Theorem Uniform Izumi Rees}. The below definition is well-defined by Corollary~\ref{corollary multiple definitions of Uniform Izumi-Rees Property}.

\begin{definition}
    \label{definition uniform izumi rees}
    Let $R$ be an excellent Noetherian domain that enjoys the Uniform Brian\c{c}on-Skoda Property. We say that $R$ enjoys the \emph{Uniform Izumi-Rees Property} if the following equivalent properties are enjoyed by $R$.
    \begin{enumerate}
        \item (Valuation criteria of the Uniform Izumi-Rees Property) There exists constant $E$ so that for every prime ideal $\fp\in\Spec(R)$, for all Rees valuations $\nu_1,\nu_2\in\mathcal{R}_{\fp R_\fp}$ of the maximal ideal of $R_\fp$, and for all $0\not = f\in R$,
        \[
        \nu_1(f)\leq E\nu_2(f).
        \]
        The constant $E$ is a \emph{Uniform Izumi-Rees Bound of $R$}.
        \item (Multiplicity criteria of the Uniform Izumi-Rees Property) There exists a constant $C$ so that for every prime ideal $\fp\in\Spec(R)$ and for all $0\not=f\in \fp$, 
        \[
        e(R_\fp/fR_\fp)\leq C\ord_\fp(f).
        \]
        
    \end{enumerate}

\end{definition}

\begin{remark}
    The Uniform Izumi-Rees Property is most naturally studied in a normal domain. Indeed, the Uniform Rees Property cannot be enjoyed by an excellent Noetherian domain $R$ that admits a prime $\fp\in\Spec(R)$ such that the normalization of the localization $R_\fp$ exhibits branching at the maximal ideal. Suppose $\fp\in \Spec(R)$ is a prime of height $h$, $s\geq 2$, and $\fq_1,\ldots,\fq_s\in\Spec(\overline{R})$ are primes of height $h$ lying over $\fp$. Choose an $f\in\fq_1$ that avoids $\cup_{i=2}^s\fq_i$. Let $0 \neq x\in\Ann_R(\overline{R}/R)$ be an element of the conductor. Then for all $t\in\NN$, the element $xf^t\in R$, and $\nu_i(xf^t)=\nu_i(x)$ for all $2\leq i\leq s$. By Theorem~\ref{theorem standard facts about integral closure and valuations} (\ref{Valuation Criteria}), there exists an $n_0$ such that $\ord_\fm(xf^t)\leq n_0$ for all $t$. By Theorem~\ref{theorem Rees multiplicity formula}, $e(R_\fm/xf^tR_\fm)\geq \nu_1(xf^t)\geq t$. 
\end{remark}

\subsection{Quasi-projective varieties over an algebraically closed field}

Main Theorem~\ref{Main Theorem eft over an algebraically closed field} is Corollary~\ref{corollary uniform izumi rees arithmetically normal} of the following theorem.

\begin{theorem}
    \label{theorem uniform izumi arithmetically normal}
    Let $k$ be an algebraically closed field and $R$ a normal domain essentially of finite type over $k$. Let $S$ be a coordinate ring of an arithmetically normal projective closure of $\Spec(R)$ and $\MM$ the homogeneous maximal ideal of $S$. Then for all prime ideals $\fp\in\Spec(R)$ and all $0\not=f\in\fp$,
    \[
    e\left(\frac{R_\fp}{fR_\fp}\right)\leq e(S_{\MM})\ord_\fp(f).
    \]
\end{theorem}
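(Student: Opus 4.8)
The plan is to reduce the statement to a question about a single local ring whose Rees valuation is controlled by $\fm$-adic order, namely the localization of the standard graded normal domain $S$, via Proposition~\ref{proposition Rees valuation of standard graded normal domain}, and then to transfer the multiplicity estimate from that ``homogeneous model'' down to $R_\fp$. First I would fix $\fp \in \Spec(R)$ and $0 \neq f \in \fp$, and choose $\fq \in \Spec(R)$ with $\fq R_\fq = (\fp, x) R_\fq$ for a suitable parameter $x$ — iterating this, one reduces the comparison $e(R_\fp/fR_\fp)$ vs.\ $\ord_\fp(f)$ to the case where $\fp$ is the homogeneous maximal ideal of $S$ (or rather its contraction), using Theorem~\ref{equimultiplicity theorem} to ensure the multiplicities do not change along the way: precisely, at each step one needs $x \notin \bigcup_{n\geq 1}\Ass_R(R_\fq/\overline{\fp^n}R_\fq)$, so that $e(R_\fp) = e(R_\fq) = e(R_\fq/xR_\fq)$, and a parallel statement for the quotient by $f$. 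Since $k$ is algebraically closed one has enough closed points and enough general hyperplane sections to make such choices of $x$; this is where one genuinely uses the hypothesis that $k = \overline{k}$.

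The core computation is then the following: present $R = k[x_1,\dots,x_n]/P$ so that its homogenization is the arithmetically normal $S = k[X_0,\dots,X_n]/{}^hP$, write $f$ as a sum of homogeneous pieces $f_t + f_{t+1} + \cdots + f_{t+c}$ of ascending degrees with $f_t \neq 0$ in $R$, and compare $\ord_\fp(f)$ with $\ord_{\MM}({}^h\!f)$ where ${}^h\!f$ is essentially $f_t$ (the leading form) inside $S_{\MM}$. The extended Rees algebra $R[\fm T, T^{-1}]$ has associated graded ring $\Gr_{\fm}(R)$; passing to the homogeneous picture, $T^{-1}$ becomes a regular element and, after the change of variables of Corollary~\ref{corollary extended rees algebra multiplicity computation} (expanding $f'$ around $T^{-1} = 1$ rather than $T^{-1}=0$), one learns that for every Rees valuation $\nu \in \RR_{\fm}$ the minimum $\min\{\nu(f), \nu(g_t),\dots,\nu(g_{t+c})\}$ is attained by $\nu(f)$ itself. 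This is exactly what is needed to equate $e\left((R[\fm T,T^{-1}]/(f'))_{\fm T}\right)$ with $e(R_\fp/fR_\fp)$ while simultaneously recognizing the former as a multiplicity computed against the single Rees valuation $\omega$ of the standard graded ring $S_{\MM}$, which by Proposition~\ref{proposition Rees valuation of standard graded normal domain} satisfies $\omega(-) = \ord_{\MM}(-)$ and $e(S_{\MM}/hS_{\MM}) = e(S_{\MM})\ord_{\MM}(h)$.

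Assembling these: $e(R_\fp/fR_\fp) = e\bigl((S_{\MM})/{}^h\!fS_{\MM}\bigr) = e(S_{\MM})\,\ord_{\MM}({}^h\!f)$ by the graded computation, and $\ord_{\MM}({}^h\!f) \leq \ord_{\fp}(f)$ since dehomogenization $\prescript{a}{}{(-)}$ sends the symbolic power $\MM^{(m)}$ into $\fp^{(m)}$ — an element of $\fp^{(m)} R$ that is a leading form of degree coming from $\MM^{(m)}$ dehomogenizes to something in $\fp^{(m)}$; this gives the inequality $e(R_\fp/fR_\fp) \leq e(S_{\MM})\ord_\fp(f)$. I expect the main obstacle to be the bookkeeping of the two equimultiplicity reductions running in parallel — one must simultaneously preserve $e(R_\fp)$, $e(R_\fp/fR_\fp)$, and the relevant orders when replacing $\fp$ by a larger prime $\fq$, which requires controlling $\Ass_R(R_\fq/\overline{\fp^n}R_\fq)$ for \emph{all} $n$ and checking that the chosen $x$ avoids these infinitely many associated primes uniformly (this is where Theorem~\ref{theorem standard facts about integral closure and valuations}(\ref{associated primes stabilize}), stabilization of associated primes of integral closures, is essential); a secondary subtlety is verifying that the leading-form dehomogenization genuinely lands in the symbolic power rather than merely the ordinary power of $\fp$.
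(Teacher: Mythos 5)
Your proposal assembles several of the right ingredients (the single Rees valuation of $S_{\MM}$ from Proposition~\ref{proposition Rees valuation of standard graded normal domain}, the decomposition $f=f_t+\cdots+f_{t+c}$ into forms, the expansion-around-$X_0=1$ device of Corollary~\ref{corollary extended rees algebra multiplicity computation}), but two load-bearing steps do not work as written. The first is the reduction from an arbitrary prime $\fp$ to a closed point. Climbing from $\fp$ to larger primes $\fq$ with $\fq R_\fq=(\fp,x)R_\fq$ via Theorem~\ref{equimultiplicity theorem} requires $e(R_\fp)=e(R_\fq)$ (equivalently $\ell(\fp R_\fq)=\height(\fp)$), which fails for general $\fp\subseteq\fq$; and even where it holds it says nothing about the quantity you actually must control, namely $\ord_\fq(f)$ versus $\ord_\fp(f)$ --- order \emph{can} jump upward at singular points, which is the very phenomenon the theorem addresses, so stabilization of $\Ass(R/\overline{\fp^n})$ will not rescue this. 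The missing ingredient is the Eisenbud--Hochster theorem: $\fp^{(n)}=\bigcap_{\fm\in U}\fm^n$ over a dense set $U$ of closed points of $V(\fp)$, which produces a maximal ideal $\fm\supseteq\fp$ with $\ord_\fm(f)=\ord_\fp(f)$; semicontinuity of multiplicity then gives $e(R_\fp/fR_\fp)\leq e(R_\fm/fR_\fm)$ with no equimultiplicity hypothesis at all (preserving $e(R_\fp)$ and $e(R_\fp/fR_\fp)$ exactly is not needed, only these inequalities). This reduction, plus the Nullstellensatz placing $\fm$ at the origin by a linear change of coordinates that leaves $S$ unchanged, is where $k=\overline{k}$ is genuinely used --- not ``general hyperplane sections.''

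The second problem is the asserted equality $e(R_\fp/fR_\fp)=e(S_{\MM}/FS_{\MM})$: it is false whenever, say, $R_\fm$ is a regular point but the vertex of the cone is singular, for then the left side is $\ord_\fm(f)$ while the right side is $e(S)\ord_\fm(f)$. You are conflating the extended Rees algebra $R[\fm T,T^{-1}]$, whose special fiber is the tangent cone $\Gr_\fm(R)$ and whose associated Rees valuations of $\fm R_\fm$ may be several, with the coordinate ring $S$ of the affine cone, which has exactly one. The correct chain is: $e(R_\fm/fR_\fm)=e\bigl((R[X_0]/F'R[X_0])_{\fm R[X_0]}\bigr)$ by the Gaussian-extension and Rees Order Ideal computation; this localization equals $(S/FS)_{(X_1,\ldots,X_n)}$ via the chart identification $S[X_1/X_0,\ldots,X_n/X_0]=R[X_0]$, where $(X_1,\ldots,X_n)$ is a prime of $S$ with $S/(X_1,\ldots,X_n)\cong k[X_0]$, hence \emph{properly contained} in $\MM$; only then does semicontinuity give $e\bigl((S/FS)_{(X_1,\ldots,X_n)}\bigr)\leq e\bigl((S/FS)_{\MM}\bigr)=e(S)\ord_{\MM}(F)=e(S)\ord_\fm(f)$, the last equality holding because the leading form $F_t$ is nonzero in $S\cong\Gr_{\MM}(S_{\MM})$. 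The inequality of the theorem lives precisely in that semicontinuity step from the ruling line to the vertex. Your fallback justification that dehomogenization carries $\MM^{m}$ into $\fp^{(m)}$ is also not correct as stated ($X_0^m\in\MM^m$ dehomogenizes to $1$); what one actually needs is only $\ord_{\MM}(F)\leq t$, i.e.\ $F_t\neq 0$ in $S$, which follows from $f_t\neq 0$ in $R$.
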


\begin{proof}
    Suppose that $R\cong \left(k[x_1,\ldots,x_n]/P\right)_W$, $X_0,X_1,\ldots, X_n$ variables so that $x_i=\frac{X_i}{X_0}$, and $S=\frac{k[X_0,X_1,\ldots,X_n]}{\prescript{h}{}{P}}$ is a standard graded normal domain with homogeneous maximal ideal $\MM=(X_0,X_1,\ldots,X_n)$. We may assume that $R$ is the affine normal domain $k[x_1,\ldots,x_n]/P$. Let $\fp \in \Spec(R)$. We abuse notation by letting $X_0,X_1,\ldots,X_n$ denote the images of the graded variables in $S$ and $x_1,\ldots,x_n$ the images of the variables in $R$. By \cite[Theorem]{EisenbudHochster} there is an open subset of maximal ideals $U\subseteq V(\fp)\cap \Max(R)$ so that
    \[
    \fp^{(\ord_{\fp}(f))} = \bigcap_{\fm \in U} \fm^{\ord_{\fp}(f)}
    \]
    and a dense open subset of maximal ideals $V\subseteq V(\fp)\cap \Max(R)$ so that
    \[
    \fp^{(\ord_{\fp}(f))} = \bigcap_{\fm \in V} \fm^{\ord_{\fp}(f)}+1.
    \]
    Therefore there exists a maximal ideal $\fm\in \Max(R)$ so that $\ord_{\fp}(f) = \ord_{\fm}(f)$. By semi-continuity of multiplicity we can assume $\fp=\fm$ is a maximal ideal of $\Spec(R)$. By Zariski's Nullstellensatz, every maximal ideal of $R$ has the form $(x_1-a_1,\ldots,x_n-a_n)$ for some $(a_1,\ldots,a_n)\in V(P)\subseteq \mathbb{A}^n_k$. After a linear change of coordinates, a process that does not change the homogeneous coordinate ring $S=k[X_0,X_1,\ldots,X_n]/\prescript{h}{}{P}$,\footnote{See \ref{subsection homogenization} for more on homogenization.} we may assume $\fm=(x_1,\ldots,x_n)$.

   Suppose that $f\in \fm$ and $\ord_\fm(f)=t$. There exists a lift of $f$ in the polynomial ring $k[x_1,\ldots,x_n]=k\left[\frac{X_1}{X_0},\ldots,\frac{X_n}{X_0}\right]$, $f=f_t+f_{t+1}+\cdots + f_{t+c}$, so that each element $f_i$, if a non-zero element in $R$, is a homogeneous polynomial of degree $i$ with respect to the maximal ideal $(x_1,\ldots, x_n)\subseteq k[x_1,\ldots,x_n]$. For each $t\leq i \leq t+c$ let $F_i$ be the corresponding homogeneous polynomial of $k[X_1,\ldots,X_n]\subseteq k[X_0,X_1,\ldots,X_n]$ and $F=F_t+\cdots +F_{t+c}$.

   The ring $S$ is a standard graded normal domain. By Proposition~\ref{proposition Rees valuation of standard graded normal domain}, the (non-homogeneous) local ring $S_{\MM}$ enjoys the property that the collection of Rees valuations of the maximal ideal $\MM S_{\MM}$ is a $1$-element set, $\RR_{\MM S_{\MM}}=\{\omega\}$, $\omega(g)=\max\{n\in \NN\mid g\in \MM^nS_{\MM}\}$, and $e(S_{\MM}/gS_{\MM}) = e(S)\omega(g)$ for all $0\not =g\in \MM S_{\MM}$. 
   
   If $0\leq i\leq t-1$ then $F\in \MM^i$ and represents the $0$-element of $\MM^i/\MM^{i+1}$. The element $F$ and $F_t$ represent the same element of $\MM^t/\MM^{t+1}$. Note that $\frac{F_t}{X_0^t}=f_t$ is a non-zero element of $R$. Therefore $F_t$ is a nonzero element of $S\cong \Gr_{\MM}(S)$. Hence the image of $F$ in $\MM^t/\MM^{t+1}\subseteq S$ is nonzero, $\ord_{\MM}(F)=t$, and
   \begin{equation}\label{multiplicity single valuation}
   e\left(\left(\frac{S}{FS}\right)_{\MM}\right) = e(S)t=e(S_{\MM})\ord_\fm(f).
   \end{equation}
   Note that $S/(X_1,\ldots,X_n)\cong k[X_0]$, in particular the ideal $(X_1,\ldots,X_n)$ is a prime ideal. By semi-continuity of multiplicity,
   \begin{equation}\label{multiplicity semicontinuity}
  e\left(\left(\frac{S}{FS}\right)_{(X_1,\cdots,X_n)}\right)\leq e\left(\left(\frac{S}{FS}\right)_{\MM}\right).
   \end{equation}

   The element $X_0$ avoids the prime ideal $(X_1,\ldots,X_n)S$. Therefore as subsets of the fraction field of $S$;
   \[
   S\left[\frac{X_1}{X_0},\ldots,\frac{X_n}{X_0}\right]_{\left(\frac{X_1}{X_0},\ldots,\frac{X_n}{X_0}\right)} = S_{(X_1,\ldots,X_n)}.
   \]
   By Proposition~\ref{proposition: homogeneization properties wrt localization}, $S\left[\frac{X_1}{X_0},\ldots,\frac{X_n}{X_0}\right]= R[X_0]$. Let 
   \[
   F'=\frac{F}{X_{0}^t} =f_t+X_0f_t+\cdots + X_0^cf_{t+c}.
   \]
   By the above,
   \begin{equation}
       \label{multiplicity under isom}
       e\left(\left(\frac{S}{FS}\right)_{(X_1,\cdots,X_n)}\right) = e\left(\left(\frac{R[X_0]}{F'R[X_0]}\right)_{(x_1,\ldots,x_n)R[X_0]}\right).
   \end{equation}
   \begin{claim}
       \label{claim 1-x avoids associated primes}
       Let $A=\frac{R[X_0]}{F'R[X_0]}$. Then 
       \[
       e\left(\left(A\right)_{\fm A}\right)= e\left(\left(\frac{A}{(X_0-1)A}\right)_{(\fm,X_0-1)A}\right) = e\left(\left(\frac{R}{fR}\right)_{\fm}\right).
       \]
   \end{claim}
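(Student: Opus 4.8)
The plan is to prove the two displayed equalities separately. The second, $e\big((A/(X_0-1)A)_{(\fm,X_0-1)A}\big)=e\big((R/(f_t+\cdots+f_{t+c})R)_\fm\big)$, is immediate: substituting $X_0=1$ in $F'=f_t+X_0f_{t+1}+\cdots+X_0^cf_{t+c}$ yields $f_t+f_{t+1}+\cdots+f_{t+c}$, so $A/(X_0-1)A=R[X_0]/(F',X_0-1)\cong R/(f_t+\cdots+f_{t+c})R$, an isomorphism carrying the maximal ideal $(\fm,X_0-1)A$ to $\fm$. The content is the first equality $e(A_{\fm A})=e\big((A/(X_0-1)A)_{(\fm,X_0-1)A}\big)$, which I would obtain by computing both sides with Rees' Order Ideal Theorem (Theorem~\ref{theorem Rees multiplicity formula}) and matching them term by term over the Rees valuations of $\fm$.

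First I would observe that $A_{\fm A}=\big(R[X_0]/F'R[X_0]\big)_{\fm R[X_0]}$ has ambient local ring $R[X_0]_{\fm R[X_0]}=R_\fm(X_0)$, an excellent normal local domain of the same Krull dimension as $R_\fm$, in which $F'$ is a nonzero element of the maximal ideal. Applying Theorem~\ref{theorem Rees multiplicity formula} to $R_\fm(X_0)$, its maximal ideal, and $F'$ gives
\[
e(A_{\fm A})=\sum_{\nu'\in\RR_{\fm R_\fm(X_0)}}\nu'(F')\, e\!\left(\frac{\overline{R_\fm(X_0)[\fm R_\fm(X_0)T,T^{-1}]}}{Q_{\nu'}}\right).
\]
By Lemma~\ref{lemma reduction to an infinite field} applied to $R_\fm\to R_\fm[X_0]$, together with Theorem~\ref{theorem standard facts about integral closure and valuations}~(\ref{Valuation Criteria}) to pass to the localization $R_\fm(X_0)$ (every Rees valuation of $\fm R_\fm[X_0]$ survives, since its center is $\fm R_\fm[X_0]$), the valuations $\nu'$ appearing above are exactly the Gaussian extensions of the Rees valuations $\nu$ of $\fm R_\fm$ — equivalently of $\fm\subseteq R$ — with $\nu'|_{R_\fm}=\nu$ and with the exceptional-component multiplicities $e\big(\overline{R_\fm(X_0)[\cdots]}/Q_{\nu'}\big)=e\big(\overline{R_\fm[\cdots]}/Q_\nu\big)$ unchanged.

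The key step is the evaluation of $\nu'(F')$. By definition of the Gaussian extension, $\nu'(F')=\nu'\!\big(\sum_j X_0^jf_{t+j}\big)=\min_j\nu(f_{t+j})$, and Corollary~\ref{corollary extended rees algebra multiplicity computation} is precisely the assertion that this minimum equals $\nu(f_t+f_{t+1}+\cdots+f_{t+c})$ — its $F'$ is the same polynomial $f_t+T^{-1}f_{t+1}+\cdots+T^{-c}f_{t+c}$, and comparing the expansion of $F'$ in powers of $X_0$ with its expansion in powers of $X_0-1$ recovers exactly that equality. Writing $f$ for the (nonzero, since $\ord_\fm(f)=t$) image of $f_t+\cdots+f_{t+c}$ in $R_\fm$, we conclude $\nu'(F')=\nu(f)$ for every $\nu$, so the sum above becomes $\sum_{\nu\in\RR_{\fm R_\fm}}\nu(f)\, e\big(\overline{R_\fm[\fm R_\fm T,T^{-1}]}/Q_\nu\big)$; applying Theorem~\ref{theorem Rees multiplicity formula} a second time — now to $R_\fm$, the ideal $\fm R_\fm$, and $f$ — identifies this with $e_{\fm R_\fm}(R_\fm/fR_\fm)=e\big((R/(f_t+\cdots+f_{t+c})R)_\fm\big)$, completing the proof of Claim~\ref{claim 1-x avoids associated primes}. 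I expect the main friction to be bookkeeping rather than conceptual: one must check carefully that the base change $R_\fm\rightsquigarrow R_\fm(X_0)$ (a localization of the polynomial extension of Lemma~\ref{lemma reduction to an infinite field}, not literally that extension) preserves both the set of Rees valuations of the maximal ideal and their exceptional-component multiplicities, and that the two expansions of $F'$ line up with the hypotheses of Corollary~\ref{corollary extended rees algebra multiplicity computation}. A slicker alternative would be to invoke the equimultiplicity theorem (Theorem~\ref{equimultiplicity theorem}) for the element $X_0-1\in A$, but verifying that $X_0-1$ avoids the stable associated primes of $\{(\fm A)^n\}$ appears to require essentially the same valuation-theoretic input, so I would present the computation above.
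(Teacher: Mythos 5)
Your proposal is correct and follows essentially the same route as the paper: the second equality via the evident isomorphism $A/(X_0-1)A\cong R/(f_t+\cdots+f_{t+c})R$, and the first by applying Rees' Order Ideal Theorem on both sides, matching terms over the Gaussian extensions of the Rees valuations of $\fm$ (Lemma~\ref{lemma reduction to an infinite field}) and using Corollary~\ref{corollary extended rees algebra multiplicity computation} to evaluate $\nu'(F')=\nu(f_t+\cdots+f_{t+c})$. The only (cosmetic) difference is that you compute the Gaussian value from the $X_0$-expansion as $\min_j\nu(f_{t+j})$ and then translate to the $(X_0-1)$-expansion, whereas the paper works with the $(X_0-1)$-expansion directly; the two minima agree since the coefficients of each expansion are integer linear combinations of those of the other.
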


   \begin{proof}[Proof of Claim]
   Observe that
   \[
   \frac{A}{(X_0-1)A} \cong \frac{R}{(f_t+f_{t+1}+\cdots + f_{t+c})R}
   \]
   and consequently
   \begin{equation}\label{multiplicity final equality}
   e\left(\left(\frac{A}{(X_0-1)A}\right)_{(x_1,\ldots,x_n,X_0-1)}\right) = e\left(\left(\frac{R}{(f_t+f_{t+1}+\cdots + f_{t+c})R}\right)_{\fm}\right) = e\left(\left(\frac{R}{fR}\right)_{\fm}\right).
   \end{equation}
    It remains to show $e\left(\left(A\right)_{\fm A}\right)= e\left(\left(\frac{A}{(X_0-1)A}\right)_{(\fm,X_0-1)A}\right)$.

   Recall that $F'=f_t+X_0f_{t+1}+\cdots + X_0^cf_{t+c}$. We change the expansion of $F'$ around $X_0=0$ to an expansion around $X_0=1$. Then if $g_t,\ldots,g_{t+c}$ are the polynomials described by Corollary~\ref{corollary extended rees algebra multiplicity computation}, then
   \[
   F' = (f_t+\cdots + f_{t+c}) + (X_0-1)g_{t+1}+\cdots+(X_0-1)^cg_{t+c}.
   \]
   Hence if $\nu''\in \RR_{\fm R[X_0]}$ is a Gaussian extension of a valuation $\nu\in \RR_{\fm}$ to $R[X_0]=R[X_0-1]$ then
   \[
   \nu''(F') = \min\{\nu(f_t+f_{t+1}+\cdots + f_{t+c}), \nu(g_t),\ldots,\nu(g_{t+c})\}.
   \]
   By Corollary~\ref{corollary extended rees algebra multiplicity computation},
   \[
   \nu''(F') =\min\{\nu(f_t+f_{t+1}+\cdots + f_{t+c}), \nu(g_t),\ldots,\nu(g_{t+c})\} = \nu(f_t+f_{t+1}+\cdots + f_{t+c}).
   \]
   
   Theorem~\ref{theorem Rees multiplicity formula} and Corollary~\ref{corollary extended rees algebra multiplicity computation} imply
   \begin{align*}
   e\left(A_{\fm A}\right) &= \sum_{\nu''\in\RR_{\fm R[X_0]_{\fm R[X_0]}}}\nu''(F') e\left(\frac{\overline{R[X_0]_{\fm R[X_0]}[\fm T,T^{-1}]}}{Q_{\nu''}}\right) \\
   &= \sum_{\nu\in \RR_{\fm}}\nu(f)e\left(\frac{\overline{R[\fm T,T^{-1}]}}{Q_\nu}\right) {\footnotesize (\mbox{by Lemma~\ref{lemma reduction to an infinite field} (\ref{basically same exceptional prime}}))}\\
   &= e(R_\fm/fR_\fm).
   \end{align*}
   This completes the proof of the claim.
\renewcommand{\qedsymbol}{$\blacksquare$}
\end{proof}

We continue with the proof of the theorem and let $A$ be as in Claim~\ref{claim 1-x avoids associated primes} so that
\[
e\left(\left(\frac{A}{(X_0-1)A}\right)_{(\fm,X_0-1)A}\right) = e\left(\left(\frac{R}{fR}\right)_{\fm}\right)
\]
and then by (\ref{multiplicity under isom}),
\[
e\left(\left(\frac{R}{fR}\right)_{\fm}\right) = e\left(\left(\frac{S}{FS}\right)_{(X_1,\cdots,X_n)}\right).
\]
By (\ref{multiplicity semicontinuity}),
\[
e\left(\left(\frac{R}{fR}\right)_{\fm}\right)\leq e\left(\left(\frac{S}{FS}\right)_{\mathcal{M}}\right).
\]
Finally, by (\ref{multiplicity single valuation}),
\[
e\left(\left(\frac{R}{fR}\right)_{\fm}\right) \leq e(S_{\mathcal{M}})\ord_{\fm}(f).
\]
\end{proof}

\begin{corollary}[Main Theorem~\ref{Main Theorem eft over an algebraically closed field}]
    \label{corollary comparison of symbolic powers}
    Let $k$ be an algebraically closed field and $R$ a normal domain essentially of finite type over $k$. Let $S$ be a coordinate ring of an arithmetically normal projective closure of $\Spec(R)$, $\MM$ the homogeneous maximal ideal of $S$, and $e(S)$ the multiplicity of $S$ with respect to $\MM$. Then for all prime ideals $\fp\subseteq \fq\in\Spec(R)$ and for all $n\geq 1$,
    \[
    \fp^{(2e(S)n)}\subseteq \fp^{(e(S)n+1)}\subseteq \fq^{(n)}.
    \]
\end{corollary}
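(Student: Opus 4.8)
The containment $\fp^{(2e(S)n)}\subseteq\fp^{(e(S)n+1)}$ is immediate: since $n\geq 1$ and $e(S)\geq 1$ we have $2e(S)n\geq e(S)n+1$, and symbolic powers form a descending chain. So the content is the containment $\fp^{(e(S)n+1)}\subseteq\fq^{(n)}$, which is exactly Main Theorem~\ref{Main Theorem eft over an algebraically closed field}. The plan is to deduce it from Theorem~\ref{theorem uniform izumi arithmetically normal} by chaining three comparisons: \emph{order at $\fp$} $\leq$ \emph{multiplicity at $\fp$} $\leq$ \emph{multiplicity at $\fq$} $\leq$ \emph{order at $\fq$}, all up to the constant $e(S)$.

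Concretely, fix $\fp\subseteq\fq$ and $0\neq f\in\fp^{(e(S)n+1)}$ (the case $f=0$ being vacuous), so that $\ord_\fp(f)\geq e(S)n+1$ and $f$ is a nonzero element of $\fp\subseteq\fq$. First I would record that $\ord_\fp(f)\leq e(R_\fp/fR_\fp)$: the ring $R_\fp$ is an excellent local normal domain, hence equidimensional and analytically reduced, and $f$ avoids its unique minimal prime $0$, so the formula of Theorem~\ref{theorem Rees multiplicity formula} applied to the maximal ideal of $R_\fp$ writes $e(R_\fp/fR_\fp)$ as a sum $\sum_{\nu}\nu(f)\,d_\nu$ over the Rees valuations $\nu$ of $\fp R_\fp$ with each integer $d_\nu\geq 1$; since $f\in(\fp R_\fp)^{\ord_\fp(f)}\subseteq\overline{(\fp R_\fp)^{\ord_\fp(f)}}$, Rees' valuation criterion (Theorem~\ref{theorem standard facts about integral closure and valuations}) gives $\nu(f)\geq \ord_\fp(f)\,\nu(\fp R_\fp)\geq\ord_\fp(f)$ for every such $\nu$, whence $e(R_\fp/fR_\fp)\geq\ord_\fp(f)$. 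Next, upper semi-continuity of Hilbert--Samuel multiplicity applied on the excellent equidimensional local ring $R_\fq/fR_\fq$ (equidimensional because $R_\fq$ is and $f$ is a nonzerodivisor), together with $\fp\subseteq\fq$, yields $e(R_\fp/fR_\fp)\leq e(R_\fq/fR_\fq)$. Finally, Theorem~\ref{theorem uniform izumi arithmetically normal} applied at $\fq$ gives $e(R_\fq/fR_\fq)\leq e(S_\MM)\ord_\fq(f)$, and $e(S_\MM)=e(S)$ because $S$ is standard graded over $k$, so $S/\MM^n$ is Artinian local and hence equals $S_\MM/\MM^nS_\MM$, making the two Hilbert--Samuel functions coincide. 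Chaining the three inequalities, $e(S)n+1\leq\ord_\fp(f)\leq e(S)\ord_\fq(f)$, so $\ord_\fq(f)>n$, hence $\ord_\fq(f)\geq n$ and $f\in\fq^{(n)}$.

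As for the main obstacle: essentially all the difficulty has already been absorbed into Theorem~\ref{theorem uniform izumi arithmetically normal} (and, behind it, the reduction to a standard graded normal domain by homogenization, Rees' Order Ideal Theorem, and semi-continuity); once that is available the present deduction is purely formal. The only thing one must be careful about is the \emph{direction} of the chain: semi-continuity of multiplicity transports information only \emph{up} the inclusion $\fp\subseteq\fq$, so one has to convert the order hypothesis at $\fp$ into a multiplicity statement, carry that multiplicity statement to $\fq$, and only then convert back from multiplicity to order at $\fq$ using the uniform bound; one also needs the identification $e(S)=e(S_\MM)$ so that the numerical constant appearing in Theorem~\ref{theorem uniform izumi arithmetically normal} matches the one in the statement of the corollary.
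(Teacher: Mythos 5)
Your proposal is correct and follows exactly the paper's argument: the chain $\ord_\fp(f)\leq e(R_\fp/fR_\fp)\leq e(R_\fq/fR_\fq)\leq e(S)\ord_\fq(f)$, with the middle step by semi-continuity of multiplicity and the last by Theorem~\ref{theorem uniform izumi arithmetically normal}. The only difference is that you spell out the first inequality via Rees' Order Ideal Theorem and the valuation criterion, a detail the paper leaves implicit.
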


\begin{proof}
    It is clear that $\fp^{(2e(S)n)}\subseteq \fp^{(e(S)n+1)}$. If $f\in \fp^{(e(S)n+1)}$ then by semi-continuity of multiplicity
    \[
    e(S)n+1\leq e\left(\frac{R_\fp}{fR_\fp}\right)\leq e\left(\frac{R_\fq}{fR_\fq}\right).
    \]
    By Theorem~\ref{theorem uniform izumi arithmetically normal}, $f\in\fq^{(n)}$.
\end{proof}

\begin{corollary}\label{corollary uniform izumi rees arithmetically normal}
    Let $k$ be an algebraically closed field and $R$ a normal domain essentially of finite type over $k$. Let $S$ be a coordinate ring of an arithmetically normal projective closure of $\Spec(R)$, $\MM$ the homogeneous maximal ideal of $S$, and $e(S)$ the multiplicity of $S$ with respect to $\MM$. Then for every $\fp\in\Spec(R)$, for every pair of distinct Rees valuations $\nu_1,\nu_2\in \RR_{\fp R_\fp}$ of the maximal ideal of $R_\fp$, and $0\not = f\in\fp$,
    \[
    \nu_1(f)\leq (e(S)-1)\nu_2(f).
    \]
\end{corollary}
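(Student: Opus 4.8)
The plan is to derive the corollary by feeding the multiplicity bound of Theorem~\ref{theorem uniform izumi arithmetically normal} into the ``multiplicity implies Izumi'' implication of Proposition~\ref{proposition of rees order ideal thm and izumi rees}; essentially no new work beyond those two results is needed.

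First I would pass to the localization $(R_\fp,\fp R_\fp)$. This is an excellent local normal domain, being a localization of a normal domain essentially of finite type over $k$; in particular it is excellent and reduced, so by Theorem~\ref{Thm Huneke's Uniform Theorems} there is $B\in\NN$ with $\overline{(\fp R_\fp)^{n+B}}\subseteq(\fp R_\fp)^{n}$ for all $n$, which is what is required to invoke Proposition~\ref{proposition of rees order ideal thm and izumi rees} for the ring $R_\fp$. Writing $\fm=\fp R_\fp$, the Rees valuations $\nu_1,\nu_2\in\RR_{\fp R_\fp}$ in the statement are precisely the valuations of $\RR_\fm$ appearing there, viewed as valuations on the common fraction field $\Frac(R)=\Frac(R_\fp)$.

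Next I would record that $e(S)=e(S_\MM)$: since $S$ is a standard graded $k$-algebra, its Hilbert--Samuel multiplicity with respect to $\MM$ coincides with the multiplicity of the local ring $S_\MM$ at $\MM S_\MM$. Then Theorem~\ref{theorem uniform izumi arithmetically normal} reads: for every $0\neq f\in\fp$,
\[
e(R_\fp/fR_\fp)\le e(S)\,\ord_\fp(f).
\]
Every nonzero $g\in\fm=\fp R_\fp$ has the form $g=f/s$ with $0\neq f\in\fp$ and $s\notin\fp$, so $g$ and $f$ differ by a unit of $R_\fp$; hence $e(R_\fp/gR_\fp)=e(R_\fp/fR_\fp)$ and $\ord_\fm(g)=\ord_\fp(f)$, and the displayed estimate holds for all $0\neq g\in\fm$ with the constant $C:=e(S)$. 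This is exactly the hypothesis of the first part of Proposition~\ref{proposition of rees order ideal thm and izumi rees}, whose conclusion gives
\[
\nu_1(g)\le (e(S)-1)\,\nu_2(g)
\]
for all $\nu_1,\nu_2\in\RR_\fm$ and all $0\neq g\in\fm$; restricting to $g=f\in\fp$ yields the claim.

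There is no real obstacle here: the only points needing a line of justification are the identification $e(S)=e(S_\MM)$ and the harmless comparison of $\ord$ and of $e(-/f\cdot)$ between $R$ and $R_\fp$. (When $e(S)=1$ the asserted inequality would force $\nu_1(f)=0$ for $f$ in the common center $\fp R_\fp$, which is impossible; so in that degenerate case $\RR_{\fp R_\fp}$ is a singleton and the statement about \emph{distinct} Rees valuations is vacuous, in accordance with the corollary.)
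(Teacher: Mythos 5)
Your proposal is correct and is exactly the paper's argument: the paper proves this corollary as "a direct application of Theorem~\ref{theorem uniform izumi arithmetically normal} and Proposition~\ref{proposition of rees order ideal thm and izumi rees}~(\ref{multiplicity implies izumi})." The extra details you supply (verifying the Brian\c{c}on--Skoda hypothesis of the proposition via Theorem~\ref{Thm Huneke's Uniform Theorems}, identifying $e(S)=e(S_\MM)$, and passing from $f\in\fp$ to arbitrary nonzero elements of $\fp R_\fp$ by unit multiplication) are correct and only make explicit what the paper leaves implicit.
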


\begin{proof}
    The corollary is a direct application of Theorem~\ref{theorem uniform izumi arithmetically normal} and Proposition~\ref{proposition of rees order ideal thm and izumi rees} (\ref{multiplicity implies  izumi}).
\end{proof}

\subsection{Reduction to an algebraically closed field} The valuation criteria of the Uniform Izumi-Rees Property (see Definition~\ref{definition uniform izumi rees}), Lemma~\ref{lemma separable field extension}, and Lemma~\ref{lemma purely insep field extension} are utilized to reduce the Uniform Izumi-Rees Property for an algebra essentially of finite type over a field to the Uniform Izumi-Rees Property to an algebra essentially of finite type over an algebraically closed field.

If $k$ is a field and $\overline{k}$ an algebraic closure of $k$, then the extension $k \to \overline{k}$ can be factored as 
\[
k \longrightarrow k_{\sep} \longrightarrow \overline{k},
\]
where $k_{\sep}$ denotes the separable closure of $k$, and the extension $k_{\sep} \to \overline{k}$ is purely inseparable. To establish the Uniform Izumi Property for a normal domain essentially of finite type over $k$, we examine the behavior of Rees valuations and the Uniform Izumi–Rees Property (Definition~\ref{definition uniform izumi rees}) separately along separable and inseparable field extensions. We remark that if $k\to k'$ is an algebraic extension, $R$ an algebra essentially of finite type over $k$, and $R' = R\otimes_kk'$, then $R\to R'$ is a faithfully flat ring extension. By lying over, the map of spectrum $\Spec(R')\to \Spec(R)$ is surjective, and for all $\fp\in \Spec(R)$, the minimal primes of the extended ideal $\fp R'$ have the same height as $\fp$, see \cite[Section~2.2]{SwansonHuneke} for necessary details.

Now let $k \to k'$ be a separable field extension. If $R$ is a normal domain essentially of finite type over $k$, then $R \otimes_k k'$ is not necessarily a domain, but instead decomposes as a product of normal domains (see \cite[Corollary~2.1.13 and Theorem~19.4.3]{SwansonHuneke}). However, the study of the Uniform Izumi–Rees Property naturally extends to products of normal domains: since the Uniform Izumi–Rees Property is local, a product of normal domains satisfies the Uniform Izumi-Rees Property if and only if each factor does.

\begin{remark}
    \label{remark about seperable field extensions}
    Let $k$ be a field, $k\to k'$ an algebraic separable field extension, $R$ an integral domain essentially of finite type over $k$ with field of fractions $K$, and $\overline{R}$ the normalization of $R$. Then $R\otimes_k k'\subseteq \overline{R}\otimes_k k'\subseteq K\otimes_k k'$ and $K\otimes_kk'$ is the total ring of fractions of $R\otimes_kk'$ and is a product of separable field extensions of $K$. It follows that $R\otimes_kk'\to \overline{R}\otimes_k k'$ is finite, birational, and $\overline{R}\otimes_k k'$ is normal by \cite[Theorem~19.4.3]{SwansonHuneke}. Therefore $\overline{R\otimes_k k'}\cong \overline{R}\otimes_k k'$. 
\end{remark}

\begin{lemma}
    \label{lemma separable field extension}
    Let $R$ be a normal domain essentially of finite type over a field $k$. Let $k\to k'$ be a separable algebraic extension of $k$ and $R'=R\otimes_k k'$. If $R'$ enjoys the Uniform Izumi-Rees Property with Uniform Izumi-Rees bound $E$, then $R$ enjoys the Uniform Izumi-Rees Property with Uniform Izumi-Rees bound $E$.
\end{lemma}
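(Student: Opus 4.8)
The plan is to verify the valuation criterion of the Uniform Izumi-Rees Property (Definition~\ref{definition uniform izumi rees}(1)) directly for $R$; this is legitimate because $R$, being essentially of finite type over a field, enjoys the Uniform Brian\c{c}on-Skoda Property by Theorem~\ref{Thm Huneke's Uniform Theorems}. So fix a prime $\fp\in\Spec(R)$, two Rees valuations $\nu_1,\nu_2\in\RR_{\fp R_\fp}$ of the maximal ideal of $R_\fp$, and $0\neq f\in R$; the goal is to show $\nu_1(f)\leq E\nu_2(f)$. Write $A=R_\fp$ and $\fm=\fp R_\fp$. Since $k\to k'$ is separable algebraic, $R\to R'=R\otimes_k k'$ is a filtered colimit of finite \'etale base changes, hence faithfully flat and ind-\'etale. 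I would then choose a prime $\fp'\in\Spec(R')$ lying over $\fp$ and set $B=R'_{\fp'}$: then $B$ is a normal local domain, $A\to B$ is faithfully flat, local and ind-\'etale (a filtered colimit of essentially \'etale $A$-algebras), the fibre ring $\kappa(\fp)\otimes_k k'$ is reduced and $0$-dimensional so that $\fm B$ is the maximal ideal $\fm_B$ of $B$, and $f$ maps to a nonzero element of $B$ (because $R\to B$ is injective: $R$ is a domain, $R\to R'$ is faithfully flat, and the minimal prime of $R'$ below $\fp'$ contracts to $(0)$).

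The heart of the argument will be to lift $\nu_1$ and $\nu_2$ to Rees valuations of $\fm_B$ \emph{without ramification}. Since $A[\fm T,T^{-1}]\otimes_A B=B[\fm_B T,T^{-1}]$ and the induced map $A[\fm T,T^{-1}]\to B[\fm_B T,T^{-1}]$ is ind-\'etale (a base change of $A\to B$), normalization commutes with it, so $\overline{B[\fm_B T,T^{-1}]}=\overline{A[\fm T,T^{-1}]}\otimes_A B$ and the induced map $\phi\colon\overline{A[\fm T,T^{-1}]}\to\overline{B[\fm_B T,T^{-1}]}$ is again faithfully flat and ind-\'etale. As $\phi$ is flat with $0$-dimensional fibres and $T^{-1}$ is a nonzerodivisor, every minimal prime of $T^{-1}\overline{B[\fm_B T,T^{-1}]}=(T^{-1}\overline{A[\fm T,T^{-1}]})\,\overline{B[\fm_B T,T^{-1}]}$ is a height-one prime contracting to some exceptional prime $Q_\nu$ of $\overline{A[\fm T,T^{-1}]}$, and each such $Q_\nu$ is the contraction of at least one of them. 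For $i=1,2$ I would pick an exceptional prime $Q_i'$ of $\overline{B[\fm_B T,T^{-1}]}$ with $\phi^{-1}(Q_i')=Q_{\nu_i}$ and let $\nu_i'\in\RR_{\fm_B}$ be the associated Rees valuation. Localizing $\phi$ at $Q_i'$ then gives an ind-\'etale, hence unramified, local extension of discrete valuation rings $V_{\nu_i}\hookrightarrow V_{\nu_i'}$; a uniformizer of $V_{\nu_i}$ therefore generates the maximal ideal of $V_{\nu_i'}$, so $\nu_i'(x)=\nu_i(x)$ for all $x\in V_{\nu_i}$, in particular for all $x\in A$, whence $\nu_i'(f)=\nu_i(f)$.

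It then remains to invoke the hypothesis: $R'$ enjoys the Uniform Izumi-Rees Property with bound $E$, so its valuation criterion applies at $\fp'\in\Spec(R')$ to the Rees valuations $\nu_1',\nu_2'\in\RR_{\fm_B}$ and the nonzero element $f$, giving $\nu_1'(f)\leq E\nu_2'(f)$. Combined with the previous step this yields $\nu_1(f)=\nu_1'(f)\leq E\nu_2'(f)=E\nu_2(f)$, and since $\fp$, $\nu_1$, $\nu_2$ and $f$ were arbitrary, $R$ satisfies the valuation criterion with the same bound $E$, which is exactly the Uniform Izumi-Rees Property for $R$ with Uniform Izumi-Rees bound $E$.

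I expect the main obstacle to be the claim in the middle paragraph that forming the normalized extended Rees algebra commutes with the ind-\'etale base change $A\to B$ and that the induced extensions $V_{\nu_i}\hookrightarrow V_{\nu_i'}$ of Rees valuation rings are unramified (equivalently, value-preserving on $A$): this rests on the standard fact that normalization commutes with \'etale --- indeed with smooth --- base change of Noetherian reduced rings, promoted to filtered colimits of \'etale morphisms by passing to the limit, together with the elementary computation of heights and of extended ideals under the faithfully flat map $\phi$. A secondary point to be handled with a little care is that $R'=R\otimes_k k'$ need not be a domain, so the Uniform Izumi-Rees Property of $R'$ is used only in the form of its valuation criterion at the primes of $R'$, all of whose localizations are normal domains --- which is all that is needed above.
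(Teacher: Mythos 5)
Your argument is correct, and it rests on the same mechanism as the paper's proof --- under a separable algebraic base change the exceptional primes of the normalized extended Rees algebra extend to radical ideals, so the new Rees valuations restrict to the old ones with no change of value, after which the hypothesis on $R'$ is applied upstairs --- but the packaging is genuinely different. The paper works globally with the radical ideal $\fp'=\fp R'$ (possibly with several components), builds a partition $\mathcal{R}_{\fp'}=\bigcup_{\nu}\Lambda_\nu$ indexed by $\mathcal{R}_\fp$, proves value-preservation by comparing primary decompositions of the principal ideal $(f)$ in the two localized normalized Rees algebras, and then needs a Zariski's Main Lemma/connectedness argument to locate $\nu_1'\in\Lambda_{\nu_1}$ and $\nu_2'\in\Lambda_{\nu_2}$ centered on a \emph{common} component of $\fp'$ before the hypothesis on $R'$ can be invoked. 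You instead fix one prime $\fp'$ over $\fp$ at the outset and pass to the local domain $B=R'_{\fp'}$: this makes the common-center issue vanish, replaces the primary-decomposition computation by the equivalent observation that the induced extensions of Rees valuation rings are unramified (the whole diagram being ind-\'etale), and is in fact closer to the literal form in which the hypothesis on $R'$ --- the valuation criterion at primes of $R'$ --- is stated. The cost is the middle-paragraph claim you rightly single out: that normalization of the extended Rees algebra commutes with the ind-\'etale base change $A\to B$ and that $\overline{A[\fm T,T^{-1}]}\otimes_A B$ is again a domain with fraction field $\Frac(B)(T)$; this is where choosing a single $\fp'$ (so that $B$ is a domain, separable algebraic over $\Frac(A)$ generically) is essential, and both points are standard for filtered colimits of \'etale maps of excellent reduced rings. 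Your closing caveat that $R'$ need not be a domain, so its Uniform Izumi-Rees Property is used only through the valuation criterion at primes of $R'$, matches exactly how the paper itself deploys the lemma.
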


\begin{proof}
    The algebra $R'$ is integrally closed in its total ring of fractions, \cite[Theorem~19.4.3]{SwansonHuneke}, and therefore is a product of normal domains, \cite[Corollary~2.1.13]{SwansonHuneke}.

    Consider the map of extended Rees algebras 
        \[
        \varphi: R[\fp T, T^{-1}]\to R'[\fp' T, T^{-1}]
        \]
        and the induced map of normalizations
        \[
        \overline{\varphi}: \overline{R[\fp T, T^{-1}]}\to \overline{R'[\fp' T, T^{-1}]}.
        \]
        By Remark~\ref{remark about seperable field extensions}, 
        \[
        \overline{R'[\fp' T, T^{-1}]}\cong \overline{R[\fp T, T^{-1}]}\otimes_k k'.
        \]

    \begin{claim}\label{claim Rees vals along separable extension}
        Let $\fp\in\Spec(R)$ and $\fp'=\fp R'$. For each $\nu\in\mathcal{R}_\fp$ and $\omega\in \mathcal{R}_{\fp'}$ let $Q_\nu$ and $Q_\omega$ denote the corresponding height $1$ prime ideals in $\overline{R[\fp T,T^{-1}]}$ and $\overline{R'[\fp' T, T^{-1}]}$ respectively.
        \begin{enumerate}
            \item There is a partition of the Rees valuations $\fp'$, indexed by the Rees valuations of $\fp$,
            \[
            \mathcal{R}_{\fp'} = \bigcup_{\nu \in \mathcal{R}_\fp}\Lambda_\nu
            \]
            with the defining property that if $\nu'\in \mathcal{R}_{\fp'}$, then $\nu' \in \Lambda_\nu$ if and only if $\nu'(Q_\nu)\geq 1$ if and only if $\nu'(Q_\nu)= 1$.
            \item\label{equality of Rees vals along separable} If $\nu\in \mathcal{R}_\fp$, $\nu'\in\Lambda_{\nu}$, and $f\in R$, then    
            \[
            \nu(f) = \nu'(f).
            \]
            \item\label{choose Rees vals over common prime lying over} Let $\nu_1,\nu_2\in \RR_{\fp R_{\fp}}$ and $\fq$ a minimal prime of $\fp'$. There exists $\omega_1,\omega_2 \in \RR_{\fq R_{\fq}}$ so that $Q_{\omega_1}\cap \overline{R[\fp T,T^{-1}]} = Q_{\nu_1}$ and $Q_{\omega_2}\cap \overline{R[\fp T,T^{-1}]}  =Q_{\nu_2}$.
        \end{enumerate}
    \end{claim}

\begin{proof}[Proof of Claim]
    In general, if $S$ is a $k$-algebra, $S'=S\otimes_kk'$, and $\fp\subseteq S$ a prime ideal, then $S/\fp\to S'/\fp S'$ is flat and algebraic, implying that $S'/\fp S'$ injects into $\overline{S/\fp}\otimes_k k'$, the latter of which is a product of normal domains by \cite[Theorem~19.4.3]{SwansonHuneke} and \cite[Corollary~2.1.13]{SwansonHuneke}. In particular, $\fp S'$ is a reduced ideal whose components have a common height. In the context of extended Rees algebras and Rees valuations, for each Rees valuation $\nu\in \mathcal{R}_\fp$, there are unique prime components of $Q_{\nu'_1},\ldots, Q_{\nu'_t}$ of $T^{-1}\overline{R'[\fp' T, T^{-1}]}$ so that
    \[
    Q_\nu \overline{R'[\fp' T, T^{-1}]} = Q_{\nu_1'}\cap \cdots \cap Q_{\nu_t'}.
    \]
    Hence there is a partition of the Rees valuations $\fp'$, indexed by the Rees valuations of $\fp$,
    \[
    \mathcal{R}_{\fp'} = \bigcup_{\nu \in \mathcal{R}_\fp}\Lambda_\nu
    \]
    with the defining property that if $\nu'\in \mathcal{R}_{\fp'}$, then $\nu' \in \Lambda_\nu$ if and only if $\nu'(Q_\nu)\geq 1$ if and only if $\nu'(Q_\nu)= 1$ as the extension of $Q_{\nu}$ to $\overline{R'}[\fp'T,T^{-1}]$ is reduced. This completes part (a) of the claim.

    Now suppose that $\nu \in \mathcal{R}_\fp$ is a Rees valuation of $\fp$. Let $\Lambda_\nu=\{\omega_1,\ldots,\omega_t\}$ be the corresponding Rees valuations of $\fp'$, let $W$ be the complement of the union of the prime ideals $Q_{\omega_i}$ in $\overline{R'[\fp'T, T^{-1}]}$ and consider the map of localizations
    \[
    \overline{\varphi}_{Q_{\nu}}: \overline{R[\fp T,T^{-1}]}_{Q_\nu}\to \overline{R'[\fp'T, T^{-1}]}_{W}.
    \]
    Let $f\in R$ and consider the principal ideal
    \[
    f\overline{R[\fp T,T^{-1}]}_{Q_\nu} = \left(Q_\nu^{\nu(f)}\right)_{Q_\nu},
    \]
    and its expansion under $\overline{\varphi}_{Q_\nu}$,
    \[
    f\overline{R'[\fp' T,T^{-1}]}_{W} = \left(\bigcap_{i=1}^tQ_{\omega_i}^{\omega_i(f)}\right)_{W}.
    \]
    Therefore $\omega_i(f) = \nu(f)\omega_i(Q_\nu) = \nu(f)$  for each $1\leq i \leq t$, as claimed.

    Only part (\ref{choose Rees vals over common prime lying over}) of the claim remains to be proven. Assume that 
    \[
    \fp' = \fp R' = \fq_1 \cap \cdots \cap \fq_t
    \]
    is the minimal decomposition of $\fp'$ as a reduced ideal of $R'$. Since $R \to R'$ is algebraic, the primes $\{\fq_1, \fq_2, \ldots, \fq_t\}$ are precisely the primes of $\Spec(R')$ lying over $\fp \in \Spec(R)$. The claim in (\ref{choose Rees vals over common prime lying over}) is that for each $1 \leq i \leq t$ there exist Rees valuations $\omega_1, \omega_2 \in \mathcal{R}_{\fq_i}$ such that 
    \[
    Q_{\omega_1} \cap \overline{R[\fp T, T^{-1}]} = Q_{\nu_1}
    \quad \text{and} \quad
    Q_{\omega_2} \cap \overline{R[\fp T, T^{-1}]} = Q_{\nu_2}.
    \]

    The minimal prime components of $T^{-1}\overline{R[\fp T, T^{-1}]}$ that contract to $\fp$ in $R$ are in bijection with the minimal prime components of 
    \[
    \frac{\overline{R_{\fp}[\fp R_{\fp} T, T^{-1}]}}{T^{-1}\overline{R_{\fp}[\fp R_{\fp} T, T^{-1}]}}.
    \]
    Similarly, the minimal prime components of $T^{-1}\overline{R'[\fp R'T, T^{-1}]}$ lying over a minimal prime of $\fp' = \fp R'$ correspond to the minimal prime components of 
    \[
    \frac{\overline{R'_{\fp}[\fp R'_{\fp} T, T^{-1}]}}{T^{-1}\overline{R'_{\fp}[\fp R'_{\fp} T, T^{-1}]}}.
    \]
    
    Now, since $\fp' = \fp R' = \fq_1 \cap \cdots \cap \fq_t$ is the minimal decomposition of $\fp'$ as a reduced ideal of $R'$, and $R \to R'$ is algebraic, we have
    \[
    \frac{R'_{\fp}}{\fp R'_{\fp}} \cong \bigtimes_{i=1}^t \frac{R'_{\fq_i}}{\fq_i R'_{\fq_i}}.
    \]
    
    By Remark~\ref{remark about seperable field extensions}, it follows that
    \[
    \frac{\overline{R_{\fp}[\fp R_{\fp} T, T^{-1}]}}{T^{-1}\overline{R_{\fp}[\fp R_{\fp} T, T^{-1}]}} \otimes_k k'
    \;\;\cong\;\;
    \frac{\overline{R'_{\fp}[\fp R'_{\fp} T, T^{-1}]}}{T^{-1}\overline{R'_{\fp}[\fp R'_{\fp} T, T^{-1}]}}
    \;\;\cong\;\;
    \bigtimes_{i=1}^t \frac{\overline{R'_{\fq_i}[\fq_i R'_{\fq_i} T, T^{-1}]}}{T^{-1}\overline{R'_{\fq_i}[\fq_i R'_{\fq_i} T, T^{-1}]}}.
    \]
    
    Since $k \to k'$ is separable and algebraic, we deduce that 
    \[
    \frac{\overline{R_{\fp}[\fp R_{\fp} T, T^{-1}]}}{T^{-1}\overline{R_{\fp}[\fp R_{\fp} T, T^{-1}]}}
    \;\;\longrightarrow\;\;
    \bigtimes_{i=1}^t 
    \frac{\overline{R'_{\fq_i}[\fq_i R'_{\fq_i} T, T^{-1}]}}{T^{-1}\overline{R'_{\fq_i}[\fq_i R'_{\fq_i} T, T^{-1}]}}
    \]
    is algebraic. Equivalently, for each $1 \leq i \leq t$, 
    \[
    \frac{\overline{R_{\fp}[\fp R_{\fp} T, T^{-1}]}}{T^{-1}\overline{R_{\fp}[\fp R_{\fp} T, T^{-1}]}}
    \;\;\longrightarrow\;\;
    \frac{\overline{R'_{\fq_i}[\fq_i R'_{\fq_i} T, T^{-1}]}}{T^{-1}\overline{R'_{\fq_i}[\fq_i R'_{\fq_i} T, T^{-1}]}}
    \]
    is algebraic. 
    
    Therefore, the minimal primes $Q_{\nu_1}$ and $Q_{\nu_2}$ of $T^{-1}\overline{R_{\fp}[\fp R_{\fp} T, T^{-1}]}$ are contractions of minimal primes of $T^{-1}\overline{R'_{\fq_i}[\fq_i R'_{\fq_i} T, T^{-1}]}$. These, in turn, correspond to minimal primes of $T^{-1}\overline{R'[\fq_i R'T, T^{-1}]}$ that contract to $\fq_i$ in $R'$, completing the proof of the claim.
    \renewcommand{\qedsymbol}{$\blacksquare$}
    \end{proof}
    
    We continue with the proof of the lemma. Let $\nu_1,\nu_2\in \mathcal{R}_\fp$ be Rees valuations of $\fp$ centered on $\fp$ and choose Rees Valuations $\nu_1',\nu_2'$ of $\fp'$ belonging to $\Lambda_{\nu_1}$ and $\Lambda_{\nu_2}$ respectively. By (\ref{choose Rees vals over common prime lying over}) of Claim~\ref{claim Rees vals along separable extension}, we can choose $\nu_1'$ and $\nu_2'$ to be centered on a common minimal prime component of $\fp'$. If $E$ is a Uniform Izumi-Rees bound of $R'$, then by (\ref{equality of Rees vals along separable}) of Claim~\ref{claim Rees vals along separable extension},
    \[
    \nu_1(f) = \nu_1'(f) \leq E\nu_2'(f) = E\nu_2(f).
    \]
    Therefore the Uniform Izumi-Rees Property of $R'$ descends to $R$ with Uniform Izumi-Rees Bound $E$.
\end{proof}

Let $k$ be a field of prime characteristic $p>0$ and $k\to k'$ an algebraic and purely inseparable field extension of $k$, i.e., for each $\alpha\in k'$ there exists $e\in\NN$ so that $\alpha^{p^e}\in k$. Let $R$ be an algebra essentially of finite type over $k$, $R' = R\otimes_k k'$. If $\fp\in \Spec(R)$, then the extended ideal $\fp R'$ is easily checked to be primary to $\sqrt{\fp R'}$. Therefore $\fp R'$ omits a unique minimal prime, namely $\sqrt{\fp R'}$, and the induced map of spectrum $\Spec(R')\to \Spec(R)$ is a bijection.

Unlike a separable base change, if $R$ is a normal domain essentially of finite type over $k$, and $k\to k'$ purely inseparable, the algebra $R\otimes_k k'$ no longer has to be integrally closed in its total ring of fractions as $R\otimes_k k'$ can be non-reduced. The following lemma reduces the study of the Uniform Izumi-Rees Property of $R$ to a normal domain obtained from $R\otimes_k k'$, namely the normalization of the integral domain $(R\otimes_k k')/\sqrt{0}$.

\begin{lemma}
    \label{lemma purely insep field extension}
    Let $R$ be a normal domain essentially of finite type over a field $k$ of prime characteristic $p>0$. Let $k\to k'$ be a purely inseparable algebraic extension of $k$, $R'=(R\otimes_k k')/\sqrt{0}$, and $\overline{R'}$ the normalization of $R'$. If $I\subseteq R$ is an ideal and $I'=IR'$ then there is a bijection of Rees valuations $\psi:\RR_I\to \RR_{I'}$. Moreover, for all $\nu\in\mathcal{R}_I$, if $\nu'=\psi(\nu)$, $\fp_\nu$ the center of $\nu$ in $R$, then $\sqrt{\fp_\nu R'}=\fp_{\nu'}$ is the center of $\nu'$ in $R'$, and for all $f\in R$,
    \[
    \frac{\nu(f)}{\nu(I)}=\frac{\nu'(f)}{\nu'(I')}.
    \]
    Let $e$ and $e'$ be respective uniform upper bounds of the Hilbert-Samuel multiplicities of the localizations of $R$ and $\overline{R'}$ at their prime ideals. If $\overline{R'}$ has the Uniform Izumi-Rees Property with Uniform Izumi-Rees Bound $E$, then $R$ has the Uniform Izumi-Rees Property with Uniform Izumi-Rees Bound $Eee'$.
\end{lemma}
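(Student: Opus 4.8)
The plan is to mirror the structure of the separable case (Lemma~\ref{lemma separable field extension}), but exploit the fact that in a purely inseparable extension valuations extend \emph{uniquely}, so that the ``$\Lambda_\nu$'' are singletons. First I would record the geometric picture: since $k\to k'$ is purely inseparable, both $R\to R'$ and $R\to\overline{R'}$ are universal homeomorphisms, $R'$ and $\overline{R'}$ are domains, $\overline{R'}$ is precisely the integral closure of $R$ in the purely inseparable field $K':=(K\otimes_k k')_{\mathrm{red}}$ (where $K=\Frac R$), and each $\fp\in\Spec R$ has exactly one prime above it in $R'$, namely $\sqrt{\fp R'}$, and exactly one in $\overline{R'}$, namely $\sqrt{\fp\overline{R'}}$. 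It suffices to treat $[k':k]<\infty$, the general case following by a direct limit over finite subextensions.

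Next I would construct $\psi$ by base-changing the normalized extended Rees algebra. Flatness of $k'$ over $k$ and the fact that the nilradical of a subring is the contraction of the ambient nilradical give $(R[IT,T^{-1}]\otimes_k k')_{\mathrm{red}}=R'[I'T,T^{-1}]$. Passing to integral closures inside $K'(T)$, the ring $\mathcal A:=\overline{R[IT,T^{-1}]}$ sits inside $\mathcal A':=\overline{R'[I'T,T^{-1}]}$, and $\mathcal A\hookrightarrow\mathcal A'$ is integral and purely inseparable (each element of $\mathcal A'$ has a $p$-power in $K(T)\cap\mathcal A'=\mathcal A$), so $\Spec\mathcal A'\to\Spec\mathcal A$ is a universal homeomorphism; in particular the height-one minimal primes of $T^{-1}\mathcal A'$ lie in bijection over those of $T^{-1}\mathcal A$, and $\psi$ sends the Rees valuation attached to $Q_\nu$ to that attached to the unique $Q_{\nu'}$ over $Q_\nu$. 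Because $Q_{\nu'}$ lies over $Q_\nu$ and $\mathcal A$ is normal, $\mathcal A'_{Q_{\nu'}}\cap K(T)=\mathcal A_{Q_\nu}$, so the normalized valuation $\nu'$ restricts to $K(T)$ as $r\,\nu$ for the ramification index $r=e(\nu'\mid\nu)$; since $I'=IR'$ is generated by $I$ this yields $\nu'(f)=r\,\nu(f)$ and $\nu'(I')=r\,\nu(I)$ for $f\in R$, hence the identity $\nu(f)/\nu(I)=\nu'(f)/\nu'(I')$. Contracting $Q_{\nu'}$ along $R\subseteq\mathcal A\subseteq\mathcal A'$ gives $\fp_{\nu'}\cap R=Q_{\nu'}\cap R=Q_\nu\cap R=\fp_\nu$, and as $\fp_{\nu'}$ is the unique prime of $R'$ over $\fp_\nu$ this is exactly $\sqrt{\fp_\nu R'}=\fp_{\nu'}$.

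For the descent I would use the multiplicity form of the Uniform Izumi--Rees Property (Corollary~\ref{corollary multiple definitions of Uniform Izumi-Rees Property}, Definition~\ref{definition uniform izumi rees}): fix $\fp\in\Spec R$, localize so that $R,R',\overline{R'}$ become local with maximal ideals $\fm,\fm',\mathfrak n$, and bound $e(R/fR)$ in terms of $\ord_\fm(f)$ for $0\neq f\in\fm$. Applying $\psi$ with $I=\fm$ and noting that the Rees valuations of $\fm R'$ and their values on ideals coincide with those of $\fm\overline{R'}$ (normalization does not alter the normalized blow-up), I get $\RR_{\fm\overline{R'}}=\{\psi(\nu):\nu\in\RR_\fm\}$, all centered at $\mathfrak n$, with $\nu'(f)/\nu'(\fm\overline{R'})=\nu(f)/\nu(\fm)$. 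Rees' Order Ideal Theorem (Theorem~\ref{theorem Rees multiplicity formula}) applied in $R$ to $\fm$ and in $\overline{R'}$ to the $\mathfrak n$-primary ideal $\fm\overline{R'}$ writes $e(R/fR)=\sum_{\nu\in\RR_\fm}\nu(f)\,e(\mathcal A/Q_\nu)$ and $e_{\fm\overline{R'}}(\overline{R'}/f\overline{R'})=\sum_{\nu}\nu'(f)\,e(\mathcal A'/Q_{\nu'})$; comparing these sums term by term along $\psi$ — using $\nu'(f)=r_\nu\,\nu(f)$ and the projection formula for Hilbert--Samuel multiplicity along the finite purely inseparable extensions $\mathcal A/Q_\nu\hookrightarrow\mathcal A'/Q_{\nu'}$ (whose ramification/residue data account for $r_\nu$ and cancel it), together with Corollary~\ref{corollary Rees multiplicity formula} (so $e(R_\fp)\le e$, $e(\overline{R'}_\mathfrak q)\le e'$, $\#\RR_\fm\le e$, $\nu(\fm)\le e$) — reduces the bound for $e(R/fR)$ to the Uniform Izumi--Rees hypothesis on $\overline{R'}$ at $\mathfrak n$, with accumulated constant $Eee'$; here $\ord_{\mathfrak n}(f)$ is comparable to $\ord_\fm(f)$ via the sandwich $\mathfrak n^{[p^e]}\subseteq\fm\overline{R'}\subseteq\mathfrak n$.

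I expect the last step to be the main obstacle. The valuations in $\RR_{\fm\overline{R'}}$ are Rees valuations of the $\mathfrak n$-primary ideal $\fm\overline{R'}$, which in general is \emph{not} a reduction of $\mathfrak n$, so the Uniform Izumi--Rees hypothesis on $\overline{R'}$ (which speaks only about Rees valuations of maximal ideals of localizations) does not apply to them verbatim; one is therefore forced through the multiplicity criterion, Rees' Order Ideal Theorem, and Proposition~\ref{proposition of rees order ideal thm and izumi rees} to go back and forth between the valuation and multiplicity formulations. The delicate bookkeeping is to carry the ramification/defect factors $r_\nu$ and $[\,\kappa(\mathfrak n):\kappa(\fm)\,]$, $[K':K]$ through the projection formula so that they cancel, leaving only the uniform multiplicity bounds $e$ and $e'$ and the Izumi--Rees bound $E$ — i.e.\ verifying that the final constant is exactly $Eee'$ and does not silently pick up a Briançon--Skoda bound.
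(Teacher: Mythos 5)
Your construction of the bijection $\psi$ and the proof of the normalized-value identity $\nu(f)/\nu(I)=\nu'(f)/\nu'(I')$ follow the paper's argument essentially verbatim: pass to the normalized extended Rees algebras, observe that $\overline{R[IT,T^{-1}]}\to\overline{R'[I'T,T^{-1}]}$ is purely inseparable (hence induces a bijection on primes), match the height-one components of $T^{-1}$, and read off $\nu'(f)=r_\nu\,\nu(f)$ and $\nu'(I')=r_\nu\,\nu(I)$ from the decompositions of the principal ideals $(f)$ and $(T^{-1})$ after localizing away from the exceptional primes. That part is correct and is the paper's route.

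The gap is in the final step, the descent of the Uniform Izumi--Rees Property with the stated bound $Eee'$. You route this through the multiplicity criterion, Rees' Order Ideal Theorem on both sides, a ``projection formula'' for multiplicity along $\mathcal{A}/Q_\nu\hookrightarrow\mathcal{A}'/Q_{\nu'}$, and a comparison of $\ord_\fm(f)$ with $\ord_{\mathfrak n}(f)$ --- none of which is actually carried out --- and you yourself flag the central difficulty (the transported valuations are Rees valuations of $\fm\overline{R'}$, not of the maximal ideal $\mathfrak n$, so the hypothesis on $\overline{R'}$ does not apply to them in the form you need) without resolving it. This is precisely the detour that the identity $\nu(f)/\nu(\fp)=\nu'(f)/\nu'(\fp')$ was set up to avoid. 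The paper stays entirely in the valuation formulation: for $\nu_1,\nu_2\in\RR_\fp$ with images $\nu_1',\nu_2'$ one writes
\[
\frac{\nu_1(f)}{\nu_1(\fp)}=\frac{\nu_1'(f)}{\nu_1'(\fp')}\leq\frac{E\,\nu_2'(f)}{\nu_1'(\fp')}=\frac{E\,\nu_2'(\fp')}{\nu_1'(\fp')}\cdot\frac{\nu_2'(f)}{\nu_2'(\fp')}=\frac{E\,\nu_2'(\fp')}{\nu_1'(\fp')}\cdot\frac{\nu_2(f)}{\nu_2(\fp)}\leq E\,\nu_2'(\fp')\,\nu_2(f),
\]
whence $\nu_1(f)\leq E\,\nu_1(\fp)\,\nu_2'(\fp')\,\nu_2(f)\leq Eee'\,\nu_2(f)$, the factors $\nu_1(\fp)\leq e$ and $\nu_2'(\fp')\leq e'$ coming from Corollary~\ref{corollary Rees multiplicity formula}. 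No multiplicities of $R/fR$, no projection formula, and no comparison of orders enter; the constant $Eee'$ falls out immediately and there is no risk of silently picking up a Brian\c{c}on--Skoda bound. To complete your write-up, replace your third paragraph with this short valuation computation; the alternative of actually supplying the projection formula and the order comparison with explicit constants is substantially harder and will not obviously return the bound $Eee'$.
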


\begin{proof}
   Let $K'$ be the field of fractions of $R'$. Then $R\to R'$  and $K\to K'$ are purely inseparable. The extension $R \to \overline{R'}$ is purely inseparable. Indeed, if $f\in K'$ and satisfies a polynomial equation
   \[
   f^{t}+a_1f^{t-1}+\cdots+a_{t-1}f+a_t=0
   \]
   with each coefficient $a_i$ belonging to $R'$, then we can choose $e\gg 0$ so that $f^{p^e}\in K$ and $a_i^{p^e}\in R$. If we raise the above equation of integral dependence to $p^e$, then
   \[
   f^{p^et}+a_1^{p^e}f^{p^e(t-1)}+\cdots+a_{t-1}^{p^e}f^{p^e}+a_t^{p^e}=0.
   \]
   Therefore $f^{p^e}$ belongs to the normalization of $R$. The ring $R$ is assumed to be normal, therefore $f^{p^e}\in R$. \footnote{If $R$ was not assumed to be normal, then the argument shows that the map of normalizations $\overline{R}\to \overline{R'}$ is purely inseparable.}

    Consider the map of extended Rees algebras 
    \[
    \varphi: R[I T, T^{-1}]\to R'[I' T, T^{-1}]
    \]
    and the induced map of normalizations
    \[
    \overline{\varphi}: \overline{R[I T, T^{-1}]}\to \overline{R'[I' T, T^{-1}]}.
    \]
    The extension $\varphi$ is purely inseparable. Therefore $\overline{\varphi}$ is a purely inseparable extension by the footnote. Hence there is a bijection of the components of $T^{-1}\overline{R[I T, T^{-1}]}$ with the components of $T^{-1}\overline{R'[I' T, T^{-1}]}$. Equivalently, there is a bijection of Rees valuations
    \[
    \psi: \mathcal{R}_I\to \mathcal{R}_{I'}
    \]
    defined as follows: If $Q_\nu$ is the component of $T^{-1}\overline{R[I T, T^{-1}]}$ corresponding to $\nu$ and $Q_{\nu'}$ is the prime ideal $\sqrt{Q_\nu \overline{R'[I' T, T^{-1}]}}$, then $\psi(\nu)=\nu'$.

    Let $W$ be the complement of the union of the components of $T^{-1} \overline{R[I T, T^{-1}]}$. Consider the map $\overline{\varphi}_W$ of localizations
    \[
    \overline{\varphi}_W:\overline{R[IT, T^{-1}]}_W\to \overline{R'[I' T, T^{-1}]}_{W}.
    \]
    We examine the decomposition of the localized principal ideal
    \[
    T^{-1}\overline{R[I T, T^{-1}]}_W = \left(\bigcap_{\nu\in\mathcal{R}_I} Q_\nu^{\nu(I)}\right)_W
    \]
    and its decomposition under $\overline{\varphi}_W$,
    \[
    T^{-1}\overline{R'[I' T, T^{-1}]} = \left(\bigcap_{\nu\in\mathcal{R}_\fp} Q_{\nu'}^{\nu'(I')}\right)_{W}.
    \]
    Then the bijection of components under expansion implies
    \begin{align}
    \label{equation of Rees val under purely insep}
    \nu'(I') = \nu(I)\nu'(Q_\nu).
    \end{align}
    Let $f\in R$ and consider the principal ideal
    \[
    f\overline{R[I T, T^{-1}]}_W = \left(\bigcap_{\nu\in\mathcal{R}_I} Q_\nu^{\nu(f)}\right)_W,
    \]
    and its image under $\overline{\varphi}_W$ in $\overline{R'[I' T, T^{-1}]}_{W}$,
    \[
    f\overline{R'[I' T, T^{-1}]}_{W} = \left(\bigcap_{I'\in\mathcal{R}_{I'}} Q_{\nu'}^{\nu'(f)}\right)_{W}.
    \]
    The bijection of exceptional components and (\ref{equation of Rees val under purely insep}) implies
    \[
    \nu'(f)=\nu(f)\nu'(Q_\nu)=\nu(f)\frac{\nu'(I)}{\nu(I)}.
    \]
    Therefore
    \[
    \frac{\nu(f)}{\nu(I)} = \frac{\nu'(f)}{\nu'(I')}
    \]
    as claimed.

    Suppose that $I=\fp\in\Spec(R)$. Let $\nu_1,\nu_2\in \mathcal{R}_\fp$ be Rees valuations of $\fp$, both centered on $\fp$, and $\nu_1',\nu_2'\in \mathcal{R}_{\fp'}$ the corresponding Rees valuations of $\fp'$, both of which are necessarily centered on $\fp'$. By the above and the assumption that $R'$ enjoys the Uniform Izumi-Rees Property with Uniform Izumi-Rees Bound $E$,
    \[
    \frac{\nu_1(f)}{\nu_1(\fp)} = \frac{\nu'_1(f)}{\nu'_1(\fp')}\leq \frac{E\nu_2'(f)}{\nu_1'(\fp')}=\frac{E\nu_2'(\fp')}{\nu'_1(\fp')}\cdot \frac{\nu_2'(f)}{\nu_2'(\fp')} = \frac{E\nu_2'(\fp')}{\nu_1'(\fp)}\cdot \frac{\nu_2(f)}{\nu_2(\fp)}\leq E\nu_2'(\fp')\nu_2(f).
    \]
    Therefore
    \[
    \nu_1(f)\leq E\nu_1(\fp)\nu_2'(\fp')\nu_2(f).
    \]
    The values $\nu_1(\fp)$ and $\nu_2'(\fp')$ are bounded from above by the Hilbert-Samuel multiplicities of $R_\fp$ and $R'_{\fp'}$ respectively by Corollary~\ref{corollary Rees multiplicity formula}, values that are bounded from above by $e$ and $e'$ respectively. Therefore $R$ enjoys the Uniform Izumi-Rees Property with Uniform Izumi-Rees Bound $Eee'$.
\end{proof}

\begin{theorem}[Main Theorem~\ref{Main Theorem Uniform Izumi Rees}]
    \label{theorem uniform izumi rees eft over a field}
    Let $k$ be a field and $R$ be a normal domain essentially of finite type over $k$. Then $R$ enjoys the Uniform Izumi-Rees Property.
\end{theorem}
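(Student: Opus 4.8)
The plan is to reduce, via the two descent lemmas of the preceding subsection, to the case of a normal domain essentially of finite type over an algebraically closed field, which is exactly the content of Corollary~\ref{corollary uniform izumi rees arithmetically normal}. Fix an algebraic closure $\overline{k}$ of $k$ and let $k^{s}$ be the separable closure of $k$ inside $\overline{k}$, so that $k\subseteq k^{s}$ is separable algebraic and $k^{s}\subseteq\overline{k}$ is purely inseparable (when $\Char k=0$ we have $k^{s}=\overline{k}$ and the second extension is trivial). Every ring occurring below is essentially of finite type over a field, hence essentially of finite type over a local ring and excellent and reduced, so Theorem~\ref{Thm Huneke's Uniform Theorems} supplies the Uniform Brian\c{c}on-Skoda Property and Definition~\ref{definition uniform izumi rees} applies to each of them.

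First I would base change to $k^{s}$. Since $k^{s}/k$ is separable algebraic, the argument in the proof of Lemma~\ref{lemma separable field extension} shows that $R_{1}:=R\otimes_{k}k^{s}$ is a normal ring, hence a finite product $R_{1}=\prod_{i=1}^{N}R_{i}$ of normal domains, each essentially of finite type over $k^{s}$. Next, for each $i$ I base change $R_{i}$ along the purely inseparable extension $k^{s}\subseteq\overline{k}$: because $\Spec(-\otimes_{k^{s}}\overline{k})\to\Spec(-)$ is a universal homeomorphism, $R_{i}\otimes_{k^{s}}\overline{k}$ has a unique minimal prime, so $R_{i}':=(R_{i}\otimes_{k^{s}}\overline{k})/\sqrt{0}$ is a domain and its normalization $\overline{R_{i}'}$ is a normal domain essentially of finite type over $\overline{k}$. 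By Corollary~\ref{corollary uniform izumi rees arithmetically normal}, $\overline{R_{i}'}$ enjoys the Uniform Izumi-Rees Property with bound $e(S_{i})-1$, where $S_{i}$ is the coordinate ring of an arithmetically normal projective closure of $\Spec(\overline{R_{i}'})$; Lemma~\ref{lemma purely insep field extension} then transfers this to $R_{i}$ with Uniform Izumi-Rees bound $E_{i}:=(e(S_{i})-1)\,e_{i}\,e_{i}'$, where $e_{i}$ and $e_{i}'$ are uniform upper bounds for the Hilbert-Samuel multiplicities of the localizations of $R_{i}$ and of $\overline{R_{i}'}$ at their prime ideals (such bounds exist by upper semicontinuity of multiplicity and quasicompactness of the spectrum). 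When $\Char k=0$ this middle step collapses: $k^{s}=\overline{k}$, each $R_{i}$ is already a normal domain essentially of finite type over the algebraically closed field $\overline{k}$, and Corollary~\ref{corollary uniform izumi rees arithmetically normal} applies to $R_{i}$ directly.

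Set $E:=\max_{1\le i\le N}E_{i}$. Every prime of $R_{1}$, and hence every Rees valuation considered in the valuation criterion, lives on a single factor $R_{i}$, so $R_{1}$ satisfies the valuation criterion of the Uniform Izumi-Rees Property with bound $E$ in the componentwise sense that is in fact the one used in the proof of Lemma~\ref{lemma separable field extension}. Applying that lemma to the separable extension $k\subseteq k^{s}$ descends the Uniform Izumi-Rees Property to $R$ with the same bound $E$, completing the proof. The only substantive ingredient is Corollary~\ref{corollary uniform izumi rees arithmetically normal}; everything else is bookkeeping with field extensions, and the point demanding the most care is that $R\otimes_{k}k^{s}$ is in general only a finite product of normal domains rather than a domain, so the two descent lemmas must be run on each of the finitely many factors and the finitely many resulting Izumi-Rees bounds combined into a single one.
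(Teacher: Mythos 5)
Your proof is correct and follows the same route as the paper's: factor $k\to\overline{k}$ as a separable algebraic extension followed by a purely inseparable one, descend via Lemma~\ref{lemma separable field extension} and Lemma~\ref{lemma purely insep field extension}, and invoke the algebraically closed case through Corollary~\ref{corollary uniform izumi rees arithmetically normal} together with the equivalence of Corollary~\ref{corollary multiple definitions of Uniform Izumi-Rees Property}. Your additional care in treating $R\otimes_k k^{s}$ as a finite product of normal domains, running the descent lemmas factor by factor, and taking the maximum of the resulting Izumi-Rees bounds is a sound refinement of a point the paper's very short proof passes over.
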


\begin{proof}
     If $\overline{k}$ is an algebraic closure of $k$, then we can factor $k\to \overline{k}$ by a separable field extension and then a purely inseparable extension. By Lemma~\ref{lemma separable field extension} and Lemma~\ref{lemma purely insep field extension}, we can replace $k$ with $\overline{k}$ and $R$ with the normalization of $(R\otimes_k\overline{k})/\sqrt{0}$ so that our ring $R$ is essentially of finite type over an and algebraically closed field. The theorem then follows by Theorem~\ref{theorem uniform izumi arithmetically normal} and Corollary~\ref{corollary multiple definitions of Uniform Izumi-Rees Property}.
\end{proof}

\section{Zariski-Nagata for Singularities}\label{Section Uniform Chevalley}
 This section contains the proof of Main Theorem~\ref{Main Theorem Improved Uniform Chevalley general}. Let $R$ be a Noetherian ring, $I\subseteq R$, $n\in\NN$, and $W$ the complement of the union of the associated primes of $I$. The \emph{$n$th symbolic power of $I$} is the ideal $I^{(n)}:=I^nR_W\cap R$. We begin with a lemma.

\begin{lemma}
\label{Lemma fixed power of m contains all fixed symbolic powers of all ideals improved}
Let $(R,\fm,k)$ be an excellent normal local domain and $E$ an Izumi-Rees bound of the Rees valuations $\RR_\fm$ of the maximal ideal $\fm$. If $t\in \mathbb{N}$ then for every ideal $I\subseteq R$, if $W$ is the complement of the union of the minimal primes of $I$,
\[
\overline{I^{Ete(R)^2}}R_W\cap R \subseteq \overline{\fm^t}.
\]
\end{lemma}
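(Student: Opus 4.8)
The plan is to verify the conclusion through Rees' valuation criterion (Theorem~\ref{theorem standard facts about integral closure and valuations}~(\ref{Valuation Criteria})): for a fixed $f\in\overline{I^{Ete(R)^2}}R_W\cap R$ it is enough to show $\nu(f)\ge t\,\nu(\fm)$ for every Rees valuation $\nu\in\RR_\fm$. Write $m=Ete(R)^2$. We may assume $f\ne 0$ and $I$ proper (the case $I=(0)$ being vacuous), so $I\subseteq\fm$ and $I\ne(0)$. First I would localize at a minimal prime $\fp$ of $I$: since $\fp\in\Ass_R(R/I)$ it is disjoint from $W$, so $R_\fp$ is a further localization of $R_W$; as ideal integral closure commutes with localization and $IR_\fp\subseteq\fp R_\fp$, this places $f$ in $\overline{I^m}R_\fp=\overline{(IR_\fp)^m}\subseteq\overline{(\fp R_\fp)^m}$. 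The ring $R_\fp$ is an excellent normal local domain, hence analytically reduced and equidimensional, and $f$ is a nonzerodivisor on it; applying Rees' Order Ideal Theorem (Theorem~\ref{theorem Rees multiplicity formula}) to the $\fp R_\fp$-primary ideal $\fp R_\fp$ and the element $f\in\fp R_\fp$, together with the valuation-criterion bound $\nu(f)\ge m\,\nu(\fp R_\fp)\ge m$ valid for all $\nu\in\RR_{\fp R_\fp}$, yields $e(R_\fp/fR_\fp)=\sum_{\nu\in\RR_{\fp R_\fp}}\nu(f)\,d_\nu\ge m$, where the exceptional multiplicities $d_\nu$ are all at least $1$.

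Next I would transport this lower bound up to $\fm$. Upper semi-continuity of multiplicity along $\fp\subseteq\fm$ in $R/fR$ gives $e(R/fR)\ge e(R_\fp/fR_\fp)\ge m$. Now apply Rees' Order Ideal Theorem once more, this time to the $\fm$-primary ideal $\fm$ and $f\in\fm$, to obtain $m\le e(R/fR)=\sum_{\nu\in\RR_\fm}\nu(f)\,e(\overline{R[\fm T,T^{-1}]}/Q_\nu)$. Here the constant $e(R)$ does double duty: by Corollary~\ref{corollary Rees multiplicity formula} one has $\sum_{\nu\in\RR_\fm}\nu(\fm)\,e(\overline{R[\fm T,T^{-1}]}/Q_\nu)=e(R)$, and since every $\nu(\fm)\ge 1$ this simultaneously forces $\sum_{\nu\in\RR_\fm}e(\overline{R[\fm T,T^{-1}]}/Q_\nu)\le e(R)$ (and $\#\RR_\fm\le e(R)$). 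Hence $m\le\big(\max_{\nu\in\RR_\fm}\nu(f)\big)\,e(R)$, so some $\widetilde\nu\in\RR_\fm$ satisfies $\widetilde\nu(f)\ge m/e(R)=Ete(R)$.

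Finally I would invoke uniformity. Every valuation in $\RR_\fm$ is centered on $\fm$, so with $E$ an Izumi-Rees bound of $\RR_\fm$ (Theorem~\ref{Standard Izumi-Rees theorem}) we get $\widetilde\nu(f)\le E\,\nu(f)$, i.e. $\nu(f)\ge\widetilde\nu(f)/E\ge t\,e(R)$ for every $\nu\in\RR_\fm$. Since $\nu(\fm)\le e(R)$ for each such $\nu$ by Corollary~\ref{corollary Rees multiplicity formula}, this gives $\nu(f)\ge t\,e(R)\ge t\,\nu(\fm)$ for all $\nu\in\RR_\fm$, and Rees' valuation criterion then concludes $f\in\overline{\fm^t}$.

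I do not anticipate a genuine obstacle: the proof is an assembly of Rees' Order Ideal Theorem and its corollary, semi-continuity of multiplicity, and the Izumi-Rees inequality, with the corollary's bound on both $\#\RR_\fm$ and each exceptional multiplicity serving as the mechanism that lets a single multiplicity estimate be spread uniformly over all of $\RR_\fm$. The one point requiring care is the reduction to a minimal prime $\fp$ of $I$: one must check that $f\in\overline{I^m}R_W\cap R$ really lands in $\overline{(\fp R_\fp)^m}$ (using that minimal primes lie among the associated primes cutting out $W$ and that ideal integral closure localizes), and that $R_\fp$ retains the analytic-reducedness and equidimensionality needed for the Order Ideal Theorem.
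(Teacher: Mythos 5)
Your proof is correct and follows essentially the same route as the paper's: a lower bound $e(R_\fp/fR_\fp)\geq Ete(R)^2$ at a minimal prime of $I$, semi-continuity of multiplicity to pass to $e(R/fR)$, Rees' Order Ideal Theorem together with Corollary~\ref{corollary Rees multiplicity formula} to extract one Rees valuation $\widetilde\nu\in\RR_\fm$ with $\widetilde\nu(f)\geq Ete(R)$, the Izumi-Rees bound to spread this to all of $\RR_\fm$, and the valuation criterion to conclude $f\in\overline{\fm^t}$. The only difference is organizational: the paper runs the final step as a proof by contradiction (assuming $\nu(f)<Ete(R)$ for every $\nu$), whereas you argue directly, which is logically equivalent.
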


\begin{proof}
By Lemma~\ref{lemma reduction to an infinite field} and Rees' Valuation criterion for containment in integral closure, see Theorem~\ref{theorem standard facts about integral closure and valuations} (\ref{Valuation Criteria}), we may assume $R$ has infinite residue fields. If $R$ is at most $1$-dimensional, then $R$ is either a field or a discrete valuation ring. Either case, the content of the lemma is trivial. In what follows, $R$ has dimension at least $2$.

Suppose $\fp$ is a minimal prime of $I$. Then 
\[
\overline{I^n}R_W\cap R\subseteq \overline{\fp^n}R_{W}\cap R\subseteq \overline{\fp^n}R_{\fp}\cap R.
\]
We therefore may reduce our considerations to $I=\fp\in\Spec(R)$ and show
\[
\overline{\fp^{Ete(R)^2}}R_\fp\cap R \subseteq \overline{\fm^t}.
\]

\begin{claim}\label{Izumi ideal containment} For each Rees valuation $\nu\in\RR_{\fm}$ of the maximal ideal $\fm$, there is a containment of ideals
\[
    I_{\nu\geq Ete(R)}\subseteq \overline{\fm^t}.
\]
\end{claim}

\begin{proof}[Proof of Claim]
    If $f\in I_{\nu\geq Ete(R)}$ then $\nu(f)\geq Ete(R)$. For all $\omega\in\RR_{\fm R}$, $\nu(f)\leq E \omega(f)$, hence
    \[
    E\omega(f)\geq \nu(f)\geq Ete(R).
    \]
    In particular, $\omega(f)\geq te(R)$.  By Corollary~\ref{corollary Rees multiplicity formula}, $e(R)\geq \omega(\fm)$. 
    Therefore, for every $\omega\in\RR_{\fm}$,
    \begin{equation}
    \label{equation large enough rees vals for q}
    \omega(f)\geq t\omega(\fm).
    \end{equation}    
    Thus $f\in\overline{\fm^t}$ by Rees' valuation criteria for containment in $\overline{\fm^t}$, see Theorem~\ref{theorem standard facts about integral closure and valuations} part (\ref{Valuation Criteria}). This completes the proof of the claim.
\renewcommand{\qedsymbol}{$\blacksquare$}
\end{proof}

Continue the proof of the lemma and let $f \in \overline{\mathfrak{p}^{Ete(R)^2}}R_\fp \cap R$ and suppose by way of contradiction that $f\not\in\overline{\fm^t}$. Because $f\in \overline{\mathfrak{p}^{Ete(R)^2}}R_\fp \cap R$, by Rees' Order Ideal Theorem, Theorem~\ref{theorem Rees multiplicity formula}, there are constants $d_{\nu}\in \NN$ associated to each Rees valuation $\nu\in \RR_{\fp R_{\fp}}$ so that
\[
e(R_{\fp}/fR_{\fp}) = \sum_{\nu\in \RR_{\fp R_{\fp}}}\nu(f)d_{\nu}\geq \nu(\overline{\fp^{Ete(R)^2}}R_{\fp})d_{\nu} = Ete(R)^2\sum_{\nu\in \RR_{\fp R_{\fp}}}\nu(\fp R_{\fp})d_{\nu}.
\]
By Corollary~\ref{corollary Rees multiplicity formula}, $\sum_{\nu\in \RR_{\fp R_{\fp}}}\nu(\fp R_{\fp})d_{\nu} = e(R_{\fp})$. Therefore
\[
e(R_{\fp}/f R_{\fp})\geq Ete(R)^2e(R_{\fp})\geq Ete(R)^2.
\]
All associated primes of $R/fR$ are height $1$ primes of the catenary ring $R$. Hence, by the semi-continuity of multiplicity and by Rees' Order Ideal Theorem, Theorem~\ref{theorem Rees multiplicity formula}, for each $\nu\in\RR_{\fm}$ there exist a constant $d_\nu$, not depending on $f$, such that
\[
Ete(R)^2 \leq e(R_\mathfrak{p}/fR_\mathfrak{p}) \leq e(R/fR) = \sum_{\nu \in \mathcal{R}_{\fm}} \nu(f)d_\nu.
\]

We are assuming $f\not \in \overline{\fm^t}$. Hence $\nu(f)<Ete(R)$ for each Rees valuation $\nu\in\mathcal{R}_{\fm}$ by Claim~\ref{Izumi ideal containment}. Therefore
\[
Ete(R)^2< Et\sum_{\nu\in\mathcal{R}_{\fm}}e(R)d_\nu.
\]
The constants $d_\nu$ are such that $e(R) = \sum_{\nu\in\mathcal{R}_{\fm}}\nu(\fm) d_\nu$, see Corollary~\ref{corollary Rees multiplicity formula}. Therefore
\[
Ete(R)^2< Ete(R)\sum_{\nu\in\mathcal{R}_{\fm}}d_\nu\leq Ete(R)\sum_{\nu\in\mathcal{R}_{\fm}}\nu(\fm)d_\nu=Ete(R)^2,
\]
a contradiction.
\end{proof}

\begin{definition}
    Let $R$ be a Noetherian ring $I\subseteq R$ an ideal and $\fq$ a prime ideal containing $I$. Then the \emph{normalized order of $I$ with respect to $\fq$} is $\overline{\ord}_\fq(I):=\max\{t\in\NN\mid IR_\fq \subseteq \overline{\fq^t}R_\fq\}$.
\end{definition}

\begin{theorem}[Improved Uniform Chevalley Theorem Criteria]
    \label{theorem how to acheive uniform chevalley}
    Let $R$ be an excellent normal domain of finite Krull dimension that enjoys the following properties:
    \begin{enumerate}
        \item\label{UIR assumption} The Uniform Izumi-Rees Property with Uniform Izumi-Rees Bound $E$;
        \item\label{UBS assumption} The Uniform Brian\c{c}on-Skoda Property with Uniform Brian\c{c}on-Skoda Bound $B$;
        \item\label{TE assumption} There exists an element $0\not= c\in R$ and a constant $C$ so that for all ideals $J\subseteq R$, for all $n\in\NN$, if $W$ is the complement of the associated primes of $J$, then $c^n(\overline{J^{Cn}}R_W\cap R)\subseteq \overline{J^n}$;
        \item\label{UAR assumption} The containment $(c)\subseteq R$ enjoys the Uniform Artin-Rees Property with Uniform Artin-Rees Bound $A$.
    \end{enumerate}
    Let $e=\max\{e(R_\fp)\mid \fp\in\Spec(R)\}$. For every ideal $I\subseteq R$, if $W$ is the complement of the union of the associated primes of $I$, then for all $n\in\NN$, for all primes $\fq$ containing $IR_W\cap R$,
    \begin{itemize}
        \item $\overline{I^{CE(A+1)^2e^2n}}R_W\cap R\subseteq \overline{\fq^{\overline{\ord}_\fq(I)n}}R_\fq \cap R.$
        \item $I^{(CE(A+1)^2e^2(B+1)n)}\subseteq \fq^{(\overline{\ord}_\fq(I)(B+1)n-B)} \subseteq \fq^{(\overline{\ord}_\fq(I)n)}.$
    \end{itemize}
\end{theorem}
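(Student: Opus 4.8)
The plan is to deduce the second displayed containment from the first, and to prove the first by localizing at $\fq$ and playing the Izumi–Rees Lemma~\ref{Lemma fixed power of m contains all fixed symbolic powers of all ideals improved} against hypotheses (\ref{TE assumption}) and (\ref{UAR assumption}).

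\textbf{Second containment from the first.} Write $m:=\overline{\ord}_\fq(I)$. Since $I^{(k)}=I^kR_W\cap R\subseteq\overline{I^k}R_W\cap R$ for every $k$, applying the first bullet with $n$ replaced by $(B+1)n$ gives $I^{(CE(A+1)^2e^2(B+1)n)}\subseteq\overline{\fq^{\,m(B+1)n}}R_\fq\cap R$. The Uniform Briançon–Skoda Property, applied in $R_\fq$ with bound $B$, yields $\overline{(\fq R_\fq)^{m(B+1)n}}\subseteq(\fq R_\fq)^{m(B+1)n-B}$, so the right–hand side lies in $\fq^{(m(B+1)n-B)}$; and $m(B+1)n-B\ge mn$ because $mn\ge 1$, giving the final containment $\fq^{(m(B+1)n-B)}\subseteq\fq^{(mn)}$. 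Thus everything reduces to the first bullet.

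\textbf{First containment: reduction to a local statement.} The ideal $\overline{\fq^{mn}}R_\fq\cap R$ is the contraction to $R$ of the integrally closed, $\fq R_\fq$–primary ideal $\overline{(\fq R_\fq)^{mn}}$, so for any ideal $\fa$ one has $\fa\subseteq\overline{\fq^{mn}}R_\fq\cap R$ iff $\fa R_\fq\subseteq\overline{(\fq R_\fq)^{mn}}$. Taking $\fa=\overline{I^{N}}R_W\cap R$ with $N:=CE(A+1)^2e^2n$, and noting that passing to $R_\fq$ only discards primary components so that $\bigl(\overline{I^{N}}R_W\cap R\bigr)R_\fq\subseteq\overline{(IR_\fq)^{N}}(R_\fq)_{W'}\cap R_\fq$ (with $W'$ the complement of the union of the associated primes of $IR_\fq$), we are reduced to proving, in the excellent normal local domain $(A,\fn):=(R_\fq,\fq R_\fq)$ with $J:=IR_\fq$ and $m=\overline{\ord}_\fn(J)$, the containment $\overline{J^{CE(A+1)^2e^2n}}A_{W'}\cap A\subseteq\overline{\fn^{mn}}$. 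Here $e(A)\le e$, the uniform $E$ is in particular an Izumi–Rees bound for $\RR_{\fn}$, $A$ is a Uniform Artin–Rees bound for $(cA)$, $B$ a Uniform Briançon–Skoda bound for $A$, and hypothesis (\ref{TE assumption}) descends to $A$. Now split on the size of $m$. If $m$ is not too large (say $m\le C(A+1)^2$), apply Lemma~\ref{Lemma fixed power of m contains all fixed symbolic powers of all ideals improved} in $A$ with the integer $t:=mn$, enlarging the exponent $E\,mn\,e(A)^2$ to $E\,mn\,e^2$: this gives $\overline{J^{Emne^2}}A_{W'}\cap A\subseteq\overline{\fn^{mn}}$, and $Emne^2\le CE(A+1)^2e^2n=N$ finishes this case.

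\textbf{First containment: the regime of large normalized order.} For $m$ large, let $x\in\overline{J^{N}}A_{W'}\cap A$. Hypothesis (\ref{TE assumption}), applied in $A$ to the ideal $J$ with parameter $n_0:=N/C=E(A+1)^2e^2n$ (so $N=Cn_0$), produces $c^{n_0}x\in\overline{J^{n_0}}$. Since $J\subseteq\overline{\fn^{m}}$ by definition of $m$, we get $\overline{J^{n_0}}\subseteq\overline{(\overline{\fn^{m}})^{n_0}}=\overline{\fn^{mn_0}}$, so $c^{n_0}x\in\overline{\fn^{mn_0}}$. Uniform Briançon–Skoda gives $\overline{\fn^{mn_0}}\subseteq\fn^{mn_0-B}$, and then $n_0$ successive applications of the Uniform Artin–Rees bound $A$ for $(cA)$ (using that $A$ is a domain to cancel powers of $c$) yield $x\in\fn^{\,mn_0-B-n_0A}\subseteq\overline{\fn^{\,mn_0-B-n_0A}}$. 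One checks $mn_0-B-n_0A\ge mn$, i.e. $mn\bigl(E(A+1)^2e^2-1\bigr)\ge B+E(A+1)^2e^2nA$, holds once $m$ exceeds a threshold controlled by $A$ and $B$; together with the previous regime this covers all values of $m=\overline{\ord}_\fq(I)$, so $x\in\overline{\fn^{mn}}$ in every case, which is the first bullet.

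\textbf{Main obstacle.} The bookkeeping in the last regime is the delicate point: one must arrange that the window ``$m$ large'' arising from the Artin–Rees and Briançon–Skoda losses in the division by $c^{n_0}$ overlaps the window ``$m\le C(A+1)^2$'' of Lemma~\ref{Lemma fixed power of m contains all fixed symbolic powers of all ideals improved}, so that the two cases together exhaust all $m$ while keeping the single exponent $CE(A+1)^2e^2$. This forces one to control the $\fq$–adic (equivalently, Rees–valuation) order of the fixed element $c$ uniformly in $\fq$ — which is exactly the conjunction of hypotheses (\ref{UIR assumption}) and (\ref{UAR assumption}) — and it is the reason the factor $(A+1)^2$, rather than $(A+1)$, must appear. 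Hypothesis (\ref{TE assumption}) plays the role of a device trading the saturation $R_W\cap R$ against a bounded power of $c$, and its interaction with the choice of $t$ in the Lemma is where the constants are apportioned.
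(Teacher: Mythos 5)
Your derivation of the second bullet from the first is correct and matches the paper, and your reduction to the local statement at $\fq$ is fine. Your ``small order'' regime is also correct and is in fact a more direct use of Lemma~\ref{Lemma fixed power of m contains all fixed symbolic powers of all ideals improved} than the paper's (the paper instead uses the Lemma with $t=A+1$ to replace $I$ by an ideal of normalized order at least $A+1$ and then re-runs the large-order argument); with $t=mn$ the Lemma genuinely covers all $m\leq C(A+1)^2$ within the exponent $N=CE(A+1)^2e^2n$.

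The gap is in the ``large order'' regime, and it is exactly the point you flag but do not resolve: your two windows need not overlap. After obtaining $c^{n_0}x\in\overline{\fn^{mn_0}}$ you first apply Brian\c{c}on-Skoda and only then colon out $c^{n_0}$, landing in $\fn^{mn_0-B-n_0A}$; the inequality $mn_0-B-n_0A\geq mn$ then forces $m\geq (B+n_0A)/(n_0-n)$, a threshold that grows with $B$. But $B$ does not appear in the exponent $CE(A+1)^2e^2$, so the threshold can exceed $C(A+1)^2$: for instance with $C=E=e=1$ and $A=1$ the first regime covers $m\leq 4$ while for $n=1$ the second requires $m\geq (B+4)/3$, leaving a hole whenever $B>11$. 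The repair is to drop Brian\c{c}on-Skoda here and colon $c^{n_0}$ directly out of the \emph{integral closure}: if $c^{n_0}x\in\overline{\fn^{mn_0}}$ then there is $c_0\neq 0$ with $c_0c^{n_0t}x^t\in\fn^{mn_0t}$ for $t\gg 0$, and iterating the Uniform Artin--Rees bound for $(c)$ on ordinary powers gives $c_0x^t\in\fn^{(mn_0-n_0A)t}$, hence $x\in\overline{\fn^{(m-A)n_0}}$. Since $n_0\geq (A+1)n$, the footnote inequality $(m-A)(A+1)\geq m$ gives $(m-A)n_0\geq mn$ for all $m\geq A+1$, so this regime covers $m\geq A+1$ and overlaps with $m\leq C(A+1)^2$. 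This is in substance what the paper does; as written, your argument does not close.
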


\begin{proof}
    First consider the case that $\overline{\ord}_\fq\left(\overline{I}\right)\geq A+1$. If $f\in \overline{I^{Cn}}R_W\cap R$ then 
    \begin{equation}\label{multiplied equation}
        c^nf\in \overline{I^{n}} \subseteq \overline{\fq^{\overline{\ord}_\fq(I)n}}R_\fq\cap R.
    \end{equation}   
    The constant $A$ is a Uniform Artin-Rees Bound of $(c)\subseteq R$. Therefore
    \[
     c(\fq^{\overline{\ord}_\fq(I)n}:_R c)= (c)\cap \fq^{\overline{\ord}_\fq(I)n} \subseteq c\fq^{\overline{\ord}_\fq(I)n-A}.
    \]
    Hence $(\fq^{\overline{\ord}_\fq(I)n}:_R c)\subseteq \fq^{\overline{\ord}_\fq(I)n-A}$. By induction, if $k\in\NN$ then $(\fq^{\overline{\ord}_\fq(I)n}:_R c^k)\subseteq \fq^{\overline{\ord}_\fq(I)n-Ak}$, so that when $n=k$, one has
    \[
    (\fq^{\overline{\ord}_\fq(I)n}:_R c^n)\subseteq \fq^{(\overline{\ord}_\fq(I)-A)n}.
    \]
    The containment persists upon taking integral closure and localization. Therefore,
    \begin{align}\label{containment upon integral closure and localization}
    (\overline{\fq^{\overline{\ord}_\fq(I)n}}R_\fq:_{R_\fq} c^n)\cap R\subseteq \overline{\fq^{(\overline{\ord}_\fq(I)-A)n}}R_\fq \cap R
    \end{align}
    for every $n\in\NN$. By (\ref{containment upon integral closure and localization}) and (\ref{multiplied equation}), for every $n\in\NN$, there is a containment of ideals
    \[
    \overline{I^{Cn}}R_W\cap R\subseteq \overline{\fq^{(\overline{\ord}_\fq(I)-A)n}}R_\fq \cap R.
    \]
    Apply the above containment with respect to $(A+1)n$,
    \begin{equation}\label{equation chevalley containment minus calculus exercise}
        \overline{I^{C(A+1)n}}R_W\cap R\subseteq \overline{\fq^{(\overline{\ord}_\fq(I)-A)(A+1)n}}R_\fq \cap R.
    \end{equation}

    The current assumption is that $\overline{\ord}_\fq\left(\overline{I}\right)\geq A+1$. An elementary inequality shows $(\overline{\ord}_\fq(I)-A)(A+1)n\geq \overline{\ord}_\fq(I)n$ for all $n\in\NN$.\footnote{If $x,a>0$ then $x\geq a+1$ if and only if $xa\geq (a+1)a$ if and only if $(x-a)(a+1)=xa-(a+1)a+x\geq x$.} Hence, if $n\in\NN$ then by (\ref{equation chevalley containment minus calculus exercise}) there are containments
    \begin{equation}\label{equation chevalley containment plus calculus exercise}
    \overline{I^{C(A+1)n}}R_W\cap R \subseteq \overline{\fq^{\overline{\ord}_\fq(I)n}}R_\fq \cap R.
    \end{equation}
    This completes the proof of the claim if $\overline{\ord}_\fq\left(\overline{I}\right)\geq A+1$.

    Now consider the case that $\overline{\ord}_\fq\left(\overline{I}\right)\leq A$. By Lemma~\ref{Lemma fixed power of m contains all fixed symbolic powers of all ideals improved} applied with respect to the constant $t=A+1$ and the maximal ideal $\fq R_\fq$ of the local ring $R_\fq$, 
    \begin{equation*}\label{equation direct application of lemma fixed power}
        \overline{I^{E(A+1)e^2}}R_W\cap R \subseteq \overline{I^{E(A+1)e(R_\fq)^2}}R_W\cap R \subseteq \overline{\fq^{A+1}}R_\fq\cap R.
    \end{equation*}    
    Therefore $\overline{\ord}_\fq\left(\overline{I^{E(A+1)e^2}}R_W\cap R\right)\geq A+1$ and we can apply the containment of (\ref{equation chevalley containment plus calculus exercise}) with respect to $J=\overline{I^{E(A+1)e^2}}R_W\cap R$. Then for every $n\in\NN$, there is a containment of ideals
    \[
    \overline{I^{CE(A+1)^2e^2n}}R_W\cap R = \overline{J^{C(A+1)n}}R_W\cap R\subseteq \overline{\fq^{\overline{\ord}_\fq(J)n}}R_\fq \cap R\subseteq \overline{\fq^{\overline{\ord}_\fq(I)n}}R_\fq \cap R.
    \]
    The remaining uniform ideal containment is an application of the assumption $R$ enjoys the Uniform Brian\c{c}on-Skoda Property with Uniform Brian\c{c}on-Skoda Bound $B$:
    \begin{align*}
         I^{(CE(A+1)^2e^2(B+1)n)}\subseteq \overline{I^{CE(A+1)^2e^2(B+1)n}}R_W\cap R&\subseteq \overline{\fq^{\overline{\ord}_\fq(I)(B+1)n}}R_\fq\cap R \\
         & \subseteq \fq^{(\overline{\ord}_\fq(I)(B+1)n-B)}\\
         &\subseteq \fq^{(\overline{\ord}_\fq(I)n)}.
    \end{align*}
\end{proof}

The above criteria for a ring to enjoy the improvement of the Uniform Chevalley Theorem that accounts for initial degree of vanishing required the existence of an element $0\not= c\in R$ and a constant $C$ so that for all ideals $I\subseteq R$, for all $n\in\NN$, if $W$ denotes the complement of union of the associated primes of $I$, then $c^n(\overline{I^{Cn}}R_W\cap R)\subseteq \overline{I^n}$. Similar notions have been studied by others in \cite{HHComparison, HKVfinite, HKV, HKAbelian, HunekeKatz}. In particular, it is known by experts that if $R$ is a domain that is either essentially of finite type over a field of characteristic $0$ or is of prime characteristic $p>0$ and $F$-finite, then any $0\not=c\in R$ with the property that $R_c$ is non-singular will have a power with the desired property. We sketch a proof and provide suitable references for details. Properties of gamma constructions given in \cite{MurayamaGamma} then allow us to extend the result to rings essentially of finite type over any field, i.e., we do not require a restriction to $F$-finite rings if $R$ is essentially of finite type over a field of prime characteristic.

\begin{theorem}[{\cite{HHComparison, HKVfinite, HKV, HKAbelian, HunekeKatz}}]
    \label{theorem: uniform multipliers exist equicharacteristic}
    Let $R$ be a Noetherian normal domain containing a field. Assume either
    \begin{itemize}
        \item $R$ is essentially of finite type over a field;
        \item $R$ is of prime characteristic $p>0$ and $F$-finite.
    \end{itemize}
    If $c\in R$ and $R_c$ is non-singular then there exists constants $C,t$ so that for all ideals $I\subseteq R$, for all $n\in\NN$, if $W$ is the complement of the union of the associated primes of $I$, then $c^{tn}(\overline{I^{Cn}}R_W\cap R) \subseteq \overline{I^n}$.
\end{theorem}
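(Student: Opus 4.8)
The plan is to reduce the statement, in two stages, to the case of an $F$-finite ring of prime characteristic, where it is an instance of the tight closure form of the Ein--Lazarsfeld--Smith comparison theorem of Hochster and Huneke. So I would first settle the $F$-finite prime characteristic case directly, then remove the $F$-finiteness hypothesis in characteristic $p$ via Murayama's $\Gamma$-construction, and finally reduce the characteristic $0$ case to characteristic $p$ by spreading out, which lands exactly in the (by then handled) case of a ring essentially of finite type over a field of prime characteristic; a dense set of the reductions are moreover automatically $F$-finite, being of finite type over finite fields.

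For the core case, let $R$ be an $F$-finite normal domain of characteristic $p>0$ with $R_c$ nonsingular, and let $d=\dim R$. By the theory of test elements a power of $c$ is a completely stable test element, so I may replace $c$ by that power and assume $c$ itself is one. Adjoining a variable and passing to $R(X)$ --- which, by the argument of Lemma~\ref{lemma reduction to an infinite field}, leaves unchanged normality, the integral closures $\overline{I^m}$, the operation $J\mapsto JR_W\cap R$, and the test element $c$ --- I may assume the residue fields are infinite, so that every ideal admits a reduction with at most $d$ generators. The asserted containment $c^{tn}\bigl(\overline{I^{Cn}}R_W\cap R\bigr)\subseteq\overline{I^n}$ can be checked after localizing at the associated primes of $\overline{I^n}$, which by Theorem~\ref{theorem standard facts about integral closure and valuations}~(\ref{associated primes stabilize}) are precisely the centers of the Rees valuations of $I$. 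At such a center one combines the localization and colon-capturing estimate relating $\overline{I^{Cn}}R_W\cap R$ to Frobenius powers of $I$ (available once $C$ exceeds the big height of $I$, so that $C>d$ works uniformly) with the tight closure Brian\c{c}on--Skoda theorem and the fact that $c$ is a test element; this is the content of \cite{HHComparison} (see also \cite{HKVfinite, HKV, HKAbelian, HunekeKatz}), and it yields constants $C,t$ depending only on $d$ and a test-element exponent --- in particular independent of $I$, of $n$, and of $p$.

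To handle a ring $R$ essentially of finite type over a field $k$ of characteristic $p>0$ with $R_c$ nonsingular, I would apply the $\Gamma$-construction of \cite{MurayamaGamma} to produce a faithfully flat extension $R\to R^{\Gamma}$ with $R^{\Gamma}$ $F$-finite and with geometrically regular fibers; then $R^{\Gamma}$ is again a normal domain, $(R^{\Gamma})_c$ is nonsingular, and faithful flatness with regular fibers gives $\overline{I^m}R^{\Gamma}=\overline{(IR^{\Gamma})^m}$, $\overline{I^m}=\overline{(IR^{\Gamma})^m}\cap R$, and compatibility of $J\mapsto JR_W\cap R$ with the base change. Applying the core case to $R^{\Gamma}$ and contracting along $R\to R^{\Gamma}$ gives the statement for $R$ with the same $C,t$. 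For $R$ essentially of finite type over a field $k$ of characteristic $0$ with $R_c$ nonsingular, I would choose a finitely generated $\mathbb{Z}$-subalgebra $A\subseteq k$ and a finitely generated $A$-algebra $R_A$ with $R=R_A\otimes_Ak$, carrying along generators of $I$, the element $c$, and algebraic witnesses for normality of $R$ and regularity of $R_c$; inverting one element of $A$, for every maximal ideal $\mu\subset A$ --- its residue field $\kappa(\mu)$ being finite of some characteristic $p$, and such $\mu$ being dense --- the fiber $R_\mu$ is a normal domain essentially of finite type over $\kappa(\mu)$ with $(R_\mu)_c$ nonsingular, and for a dense set of $\mu$ the formations of $\overline{(I_\mu)^m}$ and of $(\,\cdot\,)(R_\mu)_W\cap R_\mu$ commute with the reduction. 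The already-settled characteristic $p$ case gives $c^{tn}\bigl(\overline{I_\mu^{Cn}}(R_\mu)_W\cap R_\mu\bigr)\subseteq\overline{I_\mu^n}$ for all $n$ with $C,t$ independent of $\mu$; holding for a dense set of $\mu$, this descends to $R_A$ and thence to $R=R_A\otimes_Ak$, as in \cite{HHComparison, HKAbelian}. Alternatively, in characteristic $0$ one can argue directly with multiplier ideals along the lines of the original Ein--Lazarsfeld--Smith proof.

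The step I expect to be the main obstacle is the bookkeeping of uniformity: the whole argument hinges on arranging the core estimate with $C$ and $t$ depending only on $\dim R$ and a test-element exponent, and not on $I$, on $n$, or on the characteristic, so that these constants pass unchanged through the $\Gamma$-construction and the reduction modulo $p$; the secondary technical point is verifying that the integral closure of powers and the saturation $J\mapsto JR_W\cap R$ commute with both the $\Gamma$-construction and reduction mod $p$ for a dense set of primes.
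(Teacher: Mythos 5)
Your proposal is correct in outline and, for the prime characteristic cases, follows essentially the same architecture as the paper: settle the $F$-finite case by tight closure methods from the cited literature, then remove $F$-finiteness via Murayama's $\Gamma$-construction. The differences are these. First, the paper's opening move is to reduce the target containment $c^{tn}(\overline{I^{Cn}}R_W\cap R)\subseteq \overline{I^n}$ to the symbolic-power statement $c^nI^{(Cn)}\subseteq I^n$ by one application of the Uniform Brian\c{c}on--Skoda Theorem (\cite[Proposition~1.5.2]{SwansonHuneke}), whereas you work with integral closures throughout and check membership in $\overline{I^n}$ at the centers of the Rees valuations; both are fine, but the paper's reduction lets it quote the literature verbatim. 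Second, in the $F$-finite case the paper runs the Huneke--Katz--Validashti mechanism (a power of $c$ multiplies $F^e_*R$ into a free submodule for all $e$, then the argument of \cite[Theorem~3.5]{HKV}), while you phrase the same estimate via completely stable test elements, colon capturing, and the tight closure Brian\c{c}on--Skoda theorem; these are two dialects of the same argument. Third, and most substantively, in characteristic $0$ the paper simply cites \cite[Theorem~4.4(c)]{HHComparison} (after raising $c$ to a power so that it lies in the square of the Jacobian ideal), whereas you re-derive that result by spreading out to characteristic $p$. Your route is self-contained but reintroduces the hardest point of Hochster--Huneke's descent: the statement quantifies over all $n$, so ``the formation of $\overline{I_\mu^m}$ commutes with reduction for a dense set of $\mu$'' must be arranged for all $m$ at once, which requires descending the normalized Rees algebra (a single finitely generated algebra) rather than finitely many ideals; you correctly identify this as the main obstacle and defer to \cite{HHComparison}, which is exactly where the paper gets it for free by direct citation. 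Finally, in the $\Gamma$-construction step the paper does not invoke commutation of integral closure with flat base change in general; it exploits that $R\to R^\Gamma$ is purely inseparable and transfers the containments through the explicit bijection of Rees valuations of Lemma~\ref{lemma purely insep field extension} (together with a small adjustment of the multiplicative set $W$), which is a cleaner justification of the compatibilities you assert.
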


\begin{proof}
By the Uniform Brian\c{c}on-Skoda theorem, it will be enough to show that there exists an element $c$ and constant $C$ so that for all ideals $I$, $c^nI^{(Cn)}\subseteq I^n$, see \cite[Proposition~1.5.2]{SwansonHuneke}. If $R$ is essentially of finite type over a field of characteristic $0$, by \cite[Theorem~4.4 (c)]{HHComparison}, any element belonging to the square of the Jacobian ideal has the desired property. 
    
    Suppose that $R$ is of prime characteristic $p>0$ and $F$-finite and let $F^e_*R$ denote the finitely generated $R$-module obtained via restriction of scalars under the $e$th iterate of the Frobenius map. Then $F_*R$ is generically free and hence there exists a parameter element $c$ so that $F_*R_c$ is a free $R$-module. Replacing $c$ by a suitable power, there exists a free submodule $F_1\subseteq F_*R$ so that $cF_*R\subseteq F_1$. It follows that for each $e\in\mathbb{N}$ that there exists a free module $F_e\subseteq F^e_*R$ such that $c^2F^e_*R \subseteq F_e$, c.f. \cite[Proof of Proposition~3.4]{HKV}. We can replace $c$ by $c^2$ and repeat the argument of \cite[Proof of Theorem~3.5, Page~335, starting at second paragraph]{HKV}, replacing the $\fm$-primary ideal $J$ in \cite[Theorem~3.5]{HKV} with the element $c$, c.f. \cite[Lemma~3.3]{HunekeKatz}.

    The only consideration left is if $k$ is a field of prime characteristic $p>0$, but not necessarily $F$-finite. Let $\Lambda$ denote a $p$-basis of $k^{1/p}$ as a $k$-vector space. There exists a cofinite subset $\Gamma\subseteq \Lambda$ so that if $k^\Gamma=k[\Gamma]$ then $R^\Gamma:=R\otimes_kk^\Gamma$ is an $F$-finite normal domain essentially of finite type over $k^\Gamma$ and $R^\Gamma_c$ is regular, \cite[Theorem~A]{MurayamaGamma}. Therefore $R^\Gamma$ enjoys the claim of the theorem. If $I\subseteq R$ is an ideal of $R$, let $I^\Gamma=IR^\Gamma$. By Lemma~\ref{lemma purely insep field extension}, if $I\subseteq R$ is an ideal, then there is a bijection of Rees valuations $\psi:\RR_I\to \RR_{I^\Gamma}$. Moreover, for all $\nu\in\mathcal{R}_I$, if $\nu^\Gamma=\psi(\nu)$, $\fp_\nu$ the center of $\nu$ in $R$, then $\sqrt{\fp_\nu R^\Gamma}=\fp_{\nu^\Gamma}$ is the center of $\nu^\Gamma$ in $R^\Gamma$, and for all $f\in R$,
    \begin{equation}\label{equation to get ustp in finite field case}
        \frac{\nu(f)}{\nu(I)}=\frac{\nu^\Gamma(f)}{\nu^\Gamma(I^\Gamma)}.
    \end{equation}
    Let $W'$ denote the complement of the union of prime ideals belonging to $\Ass(R/I)\cap \left(\bigcup_{n\in\NN}\Ass(R/\overline{I^n})\right)$, then for every $n\in\NN$,
    \[
    \overline{I^n}R_W\cap R= \overline{I^n}R_{W'}\cap R.
    \]
    By the $F$-finite case of the theorem, we can replace $c$ by a suitable power and there exists constant $C$ so that for all ideals $I\subseteq R$, 
    \[
    c^{n}(\overline{(I^\Gamma)^{Cn}}R^{\Gamma}_{W'}\cap R^\Gamma) \subseteq \overline{(I^\Gamma)^n}.
    \]
    
    The bijection of Rees valuations of $I$ and $I^\Gamma$, equality of values described by (\ref{equation to get ustp in finite field case}), and the valuation criterion of Rees, see Theorem~\ref{theorem standard facts about integral closure and valuations} part (\ref{Valuation Criteria}), imply the same containment properties in $R$, that is for all ideals $I\subseteq R$,
     \[
    c^{n}(\overline{I^{Cn}}R_{W}\cap R)=c^{n}(\overline{I^{Cn}}R_{W'}\cap R) \subseteq \overline{I^n}.
    \]
\end{proof}

\begin{corollary}[Main Theorem~\ref{Main Theorem Improved Uniform Chevalley general}]
    \label{Corollary Improved Uniform Chevalley for eft over field}
    Let $k$ be an field and $R$ a normal domain essentially of finite type over $k$. There exists a constant $C$ so that for all ideals $I\subseteq R$ and for all primes $\fq\in\Spec(R)$, if $IR_\fq\cap R\subseteq \fq^{(t)}$ then
        \[
        I^{(Cn)}R_\fq\cap R\subseteq \fq^{(tn)}.
        \]
\end{corollary}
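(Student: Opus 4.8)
The plan is to obtain the corollary as a direct application of the Improved Uniform Chevalley Theorem Criteria (Theorem~\ref{theorem how to acheive uniform chevalley}) to $R$, followed by a short reconciliation of the normalized order $\overline{\ord}_\fq(I)$ that appears there with the symbolic order $t$ in the hypothesis. Note first that a normal domain $R$ essentially of finite type over a field is, in particular, an excellent normal domain of finite Krull dimension, so Theorem~\ref{theorem how to acheive uniform chevalley} applies once its four numbered hypotheses are verified; that verification is the bulk of the work.

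For the hypotheses: (\ref{UIR assumption}), the Uniform Izumi-Rees Property with some bound $E$, is exactly Main Theorem~\ref{Main Theorem Uniform Izumi Rees} (proved as Theorem~\ref{theorem uniform izumi rees eft over a field}); (\ref{UBS assumption}), the Uniform Brian\c{c}on-Skoda Property with some bound $B$, and (\ref{UAR assumption}), the Uniform Artin-Rees Property for principal ideals with some bound $A$, both follow from Huneke's Theorem~\ref{Thm Huneke's Uniform Theorems}, since $R$ is excellent, reduced, and essentially of finite type over a field (hence over a local ring). For (\ref{TE assumption}) I would choose $0\neq c\in R$ with $R_c$ non-singular; such $c$ exists because the regular locus of the excellent normal domain $R$ is open and dense. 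Theorem~\ref{theorem: uniform multipliers exist equicharacteristic} then furnishes constants $C_0,t_0$ with $c^{t_0 n}(\overline{I^{C_0 n}}R_W\cap R)\subseteq\overline{I^n}$ for every ideal $I$, where $W$ is the complement of the union of $\Ass(R/I)$; replacing $c$ by $c^{t_0}$ puts this in the precise shape demanded by~(\ref{TE assumption}), and $(c^{t_0})$ still enjoys the Uniform Artin-Rees Property by Theorem~\ref{Thm Huneke's Uniform Theorems}, so this same element also serves hypothesis~(\ref{UAR assumption}).

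With the hypotheses in hand, Theorem~\ref{theorem how to acheive uniform chevalley} yields a constant $C$ — an explicit product of $C_0$, $E$, $(A+1)^2$, $e^2$, and $(B+1)$, where $e=\max\{e(R_\fp)\mid\fp\in\Spec(R)\}$ — such that $I^{(Cn)}\subseteq\fq^{(\overline{\ord}_\fq(I)n)}$ for every ideal $I$, every prime $\fq\supseteq I$, and every $n\in\NN$. To finish, suppose $IR_\fq\cap R\subseteq\fq^{(t)}$. Then $I\subseteq IR_\fq\cap R\subseteq\fq$, so the above applies; and localizing the hypothesis at $\fq$ gives $IR_\fq\subseteq\fq^tR_\fq\subseteq\overline{\fq^t}R_\fq$, i.e.\ $\overline{\ord}_\fq(I)\geq t$, whence $\fq^{(\overline{\ord}_\fq(I)n)}\subseteq\fq^{(tn)}$ and therefore $I^{(Cn)}\subseteq\fq^{(tn)}$. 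Since $\fq^{(tn)}$ is $\fq$-primary it equals its own $\fq$-saturation $\fq^{(tn)}R_\fq\cap R$, so the operation $J\mapsto JR_\fq\cap R$ turns the last containment into $I^{(Cn)}R_\fq\cap R\subseteq\fq^{(tn)}$, which is the assertion. Because the genuinely hard inputs — the Uniform Izumi-Rees Property, the existence of the uniform multiplier $c$, and the Criteria Theorem itself — are already available, the only steps that need care are checking the hypotheses of Theorems~\ref{Thm Huneke's Uniform Theorems} and~\ref{theorem: uniform multipliers exist equicharacteristic}, the exponent bookkeeping in passing from $c$ to $c^{t_0}$, and the final $\fq$-saturation; I do not expect any of these to constitute a real obstacle.
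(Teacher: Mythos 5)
Your proposal is correct and follows the same route as the paper: verify the four hypotheses of Theorem~\ref{theorem how to acheive uniform chevalley} via Theorem~\ref{theorem uniform izumi rees eft over a field}, Huneke's uniform theorems, and Theorem~\ref{theorem: uniform multipliers exist equicharacteristic}, then observe that $IR_\fq\cap R\subseteq\fq^{(t)}$ forces $\overline{\ord}_\fq(I)\geq t$ and apply the criteria theorem. Your write-up is in fact more careful than the paper's one-paragraph proof, in particular about replacing $c$ by $c^{t_0}$ to match the exact shape of hypothesis~(\ref{TE assumption}) and about the final $\fq$-saturation step.
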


\begin{proof}
    It suffices to verify the ring $R$ enjoys all hypotheses of Theorem~\ref{theorem how to acheive uniform chevalley}:
    \begin{enumerate}
        \item The ring $R$ enjoys the Uniform Izumi-Rees Property by Theorem~\ref{theorem uniform izumi rees eft over a field}.
        \item The ring $R$ enjoys the Uniform Brian\c{c}on-Skoda Property by \cite[Theorem~4.13]{HunekeUniformBounds}. 
        \item By Theorem~\ref{theorem: uniform multipliers exist equicharacteristic}, there exists an element $0\not=c\in R$ and a constant $C$ so that for all ideals $J\subseteq R,$ for all $n\in \NN$, if $W$ is complement of the union of the associated primes of $J$, then $c^n(\overline{J^{Cn}}R_W\cap R)\subseteq \overline{J^n}$.
        \item The containment $(c)\subseteq R$ enjoys the Uniform Artin-Rees Property by \cite[Theorem~4.12]{HunekeUniformBounds}.
    \end{enumerate}  
    Therefore $R$ enjoys the hypotheses of Theorem~\ref{theorem how to acheive uniform chevalley}, the conclusion of which implies the statement of Main Theorem~\ref{Main Theorem Improved Uniform Chevalley general}.
\end{proof}

\bibliographystyle{skalpha}
\bibliography{main}
\end{document}